\documentclass[12pt]{amsart}

\usepackage[english]{babel}
\usepackage[utf8]{inputenc}
\usepackage[T1]{fontenc}

\usepackage[margin=1.5in]{geometry}

\usepackage[colorinlistoftodos]{todonotes}
\usepackage[colorlinks=true, allcolors=blue]{hyperref}

\usepackage[alphabetic]{amsrefs}

\usepackage[makeroom]{cancel}
\usepackage{setspace}
\usepackage{amsfonts} 
\usepackage{amssymb}
\usepackage{amsthm}
\usepackage{indentfirst}
\usepackage{enumerate}
 \usepackage{enumitem}   
\usepackage{amsmath}
\usepackage{graphicx}

\usepackage{mathabx}
\graphicspath{ {./images/} }

\usepackage{enumitem}
\usepackage[most]{tcolorbox}

\usepackage{tikz-cd}

\definecolor{myGreen}{RGB}{10 100 10}

\newcommand{\bC}{\mathbb{C}}

\newcommand{\bR}{\mathbb{R}}

\newcommand{\bF}{\mathbb{F}}
\newcommand{\bN}{\mathbb{N}}

\newcommand{\bA}{\mathbb{A}}

\newcommand{\cD}{\mathcal{D}}

\newcommand{\cS}{\mathcal{S}}

\newcommand*{\tran}{\mathsf{T}}

\newtheorem{theorem}{Theorem}[section]
\newtheorem{lemma}[theorem]{Lemma}
\newtheorem{prop}[theorem]{Proposition}

\newtheorem{cor}[theorem]{Corollary}
\newtheorem{thm}[theorem]{Theorem}
\newtheorem{lem}[theorem]{Lemma}

\newtheorem*{cor*}{Corollary}
\newtheorem*{thm*}{Theorem}
\newtheorem*{lem*}{Lemma}

\newtheorem*{prop*}{Proposition}

\theoremstyle{definition}
\newtheorem{definition}[theorem]{Definition}

\newtheorem*{defn*}{Definition}

\theoremstyle{remark}

\title[Base norm spaces--classical and nc]{\textbf{Base norm spaces--classical, complex, and noncommutative}}

\author{David P. Blecher}
	
\address{Department of Mathematics, University of Houston, Houston, TX 77204-3008.}
	
\email{dpbleche@central.uh.edu}

\author{Damon M. Hay}
	
\address{Department of Mathematics \& Statistics, Sam Houston State University, Huntsville, TX 77341}
	
\email{dhay@shsu.edu} 

\date{Revision of 2/12/2026}

\begin{document}
    \begin{abstract}    We generalize the theory of base norm spaces to the complex case, and  further to the noncommutative setting relevant to `quantum convexity'.  In particular, 
    we establish the duality between complex Archimedean order unit spaces and complex base norm spaces, as well as the corresponding duality between their noncommutative counterparts. Additional topics include an exploration of natural connections with various notions of quantum convexity and regularity of noncommutative convex sets, and an analysis of how these concepts interact with complexification.    We also define, as in the classical case, a class that contains and generates the noncommutative base norm spaces, but is defined by fewer axioms.    We show how this may be applied to provide new and  interesting  examples of noncommutative  base norm spaces. 
  \end{abstract} \maketitle

\section{Introduction} 
In classical functional analysis, {\em base norm spaces} appear as the objects that are dual to {\em archimedean order unit spaces}.  Both are ordered vector spaces whose order structure induces a norm. Whereas archimedean order unit spaces (or aou spaces for short) carry a norm which is induced by an order unit, the norm on a base norm space is induced by a base for the cone of positive elements. The complete theory of these spaces also incorporates (Banach) dual versions of these objects. Aspects of this theory go back to at least the 1930s and 1940s with work of M. Krein, D. A. Edwards, J. Grosberg (see e.g. \cites{Krein,E,GK}), and certainly others, and continued into the 1960s when A.J. Ellis  essentially completed this theory in its classical (real) form (in e.g.\ \cite{Ellis}). 
In this note, we extend this theory to both complex ordered spaces and to their noncommutative counterparts, introducing the notion of a {\em complex base norm space} and a {\em noncommutative (nc) base norm space} (both real and complex), and by proving the key duality results about these spaces. Complex and noncommutative analogues of archimedean order units are already objects of study.  Indeed, it is well-known that the concept of an {\em operator system} is a far-reaching noncommutative generalization of an aou space which plays a central role in the theory of operator algebras and noncommutative functional analysis, generally.  

In classical functional analysis, the dual of a real order unit space is a real base-norm space, and vice versa.  We prove the analogue of this in the complex case and in the noncommutative setting.  In particular,  the operator space dual of an operator system is a nc base norm space, and vice versa, including variants for dual versions of these spaces.
In some sense this means that 
the various base norm spaces are simply the preduals or duals of operator systems, and `bases' are just state spaces.  Nonetheless they are interesting operator spaces.
Moreover the concept of a base, and the base norm theory,  is valuable, for example in constructing new examples, or for  connecting to convexity theory where bases are a central object.  Or for  connecting to parts of the quantum physics literature such as GPT's (Generalized Probability Theories).
This was one of the original motivations for studying abstract base norm and aou spaces in the 70's.  E.g. this was initiated perhaps in \cite{DL}, and continues until today (see e.g.\ \cite{Lami,ALetal}, which also describe some of the older work).  In the words of the latter reference, GPT's encompass  ``all physical models whose predictive power obeys minimal requirements'' and ``a GPT makes the host vector space a Banach space in a canonical way, by equipping it with a so-called base norm".   Another example:\  from the perspective of most physicists working in quantum information theory (QIT), {\em Quantum channels} are just our {\em base morphisms}, that is maps between matrices (viewed as nc base spaces in our language) preserving the base.  Indeed such physicists usually prefer the base formulation, as any operator algebraist not familiar with physics knows who has tried to read a quantum physics or QIT article and found themselves having to `translate'  the predual or base formulations into statements at the algebra level.
We have not taken the time to do so here but parts of our paper can be related to entanglement, analogously to e.g.\ \cite{PTT} and other works building on that paper (e.g.\ \cite[Section 9]{BR} and references therein).  

One may view classical or noncommutative base norm spaces 
as a subtopic of classical or noncommutative convexity. 
By Kadison's representation theorem (see below), complete aou spaces `are' just spaces $A(K)$ of continuous scalar-valued affine functions  on a compact convex set $K$. Here the compact convex set is the state space of the order unit space. This persists in the noncommutative setting as well, where one considers matrix affine functions $\bA(K)$ and matrix or nc convex sets
\cite{WW,DK,BMcI}.  Regardless, this convex set becomes the base of the cone for the dual space. This relation is reversed when one starts with a base norm space and forms its dual. 
These hold in either the real or the complex setting.  

We now describe the structure of our paper and some of the main results not already alluded to. We first explain notation and give background information at the conclusion of this section.  
In Section \ref{rbns} we review and clarify terminology for real base norm spaces, and address some  
variations in the literature regarding the definition of a base norm space, especially on whether or not the positive cone or base are assumed closed.  We describe this a bit more below.  In Section \ref{cbns}, the notion of a complex base norm space is introduced.  In this section we are doing for classical base norm spaces what was done for aou spaces in parts of \cite{PT}.   This is based on the complexification of a real base norm space using the \emph{dual Taylor norm} which we describe there.  
With this in mind the theory of complex base norm spaces becomes a simple add-on to the classical theory of real base norm spaces from Section \ref{rbns} or \cite{AE,Alfsen}. Essentially everything in the complex theory follows quickly from the 
real theory, together with the fact above that the complex base norm is completely determined, via the dual Taylor norm,  by the classical `base norm' on the selfadjoint part.  Or in other words, the complex case is just a standard complexification of the real case, as is also the case for aou spaces.   (Nonetheless there are some slight differences between these two theories.) 
We prove the fundamental duality results indicated above in this setting.

In Section \ref{ncc} we introduce our variant of a noncommutative base norm space, and prove the natural duality theorems along with other relevant results and corollaries.  
We also define, as in the classical case, a class called {\em matrix base ordered spaces} that contains and generates the noncommutative base norm spaces, but is defined by fewer axioms.  
One of the conditions defining this class was inspired by a recent approach of Travis Russell to `noncommutative base norm spaces' \cite[Section 7]{Rus} which we discovered only after the first ArXiV version of our paper. (In the earlier version we had a longer definition of matrix base ordered spaces, but we were able to simplify it by Russells's idea and our new Lemma \ref{ts}.)   Russell's spaces are very interesting, but are not a strict generalization of classical 
base norm spaces as we discuss in the Acknowledgements.   As in the classical case, we show that we can `complete' any matrix base ordered space to become a nc base norm space.  This construction will be important in 
constructing new examples of  nc base norm spaces in the future.  
We also explore, for example,  how these concepts interact with complexification; and with various notions of `quantum convexity',  and with `regularity of nc convex sets' developed in a companion paper \cite{Breg}. 

Section 
\ref{ex} is devoted to some examples.
Subsection \ref{ex1}  takes a detailed look at the Paulsen system of an operator space as an example of a nc base norm space.  This is interesting because it is both an infinite dimensional operator system and a nc base norm space with the same matrix order, but with different, though equivalent, matrix norms.   Also, finite dimensional operator systems are  nc base norm spaces.  This adds a perspective to the known Choi-Effros-Paulsen operator system duality theory, whose real version is summarized in \cite[Section 8]{BR}. (In the latter connection we make a remark concerning an objection that might be raised by some readers familiar with the 
latter theory.  Namely, some might say that ``the modern approach is that the 
dual of an operator system should be an operator system, not a
base norm space''.  
This objection is in some sense an example of the ``false dilemma" fallacy.
Indeed for many (indeed very noble) purposes one really does want (some) duals of  operator systems to be operator systems, but there are other areas of our field where one does not want this, in particular where we definitely do not want 
to lose the dual space norm on the dual space.  To illustrate this forcibly, consider the dual of $\ell^\infty_n$.  Its 
canonical Choi-Effros operator system dual is $\ell^\infty_n$ again.  However in many problems in analysis we
(and statistically speaking, most analysts or physicists) really care about the actual $\ell^1_n$ norm, and would not want to lose it!  A similar situation exists for $M_n$. 
Besides, in this section we are in part showing that these two views are compatible.)

In this section we also give a new approach  to, and definition of, complex base norm spaces, via the nc theory. 

It is worth saying up front that there are several variants of the theory developed here. The reader may find this bewildering at first, but in fact all of these variants are beautifully interrelated (by theorems here).   Indeed, as we will explain below, even in the classical real theory there are two main distinct classes both called `base norm space',
one of which has no norm mentioned in its definition.
Also, as is typical in functional analysis there are distinct  norm and weak* versions; thus we will consider 
dual base norm spaces and their noncommutative variants.
We use the plural `variants' here because we will consider
both the (Wittstock) matrix convexity and the (Davidson-Kennedy) nc convexity variants.  Finally, there is the real and complex distinction; these two are interrelated by complexification.   

In the literature on classical (real) base norm spaces there is a curious phenomena.   By far the main definition usually given 
of base norm spaces (e.g.\ in \cites{AE,AS1}) is located within  the category of ordered normed vector spaces, so that the base and positive cone is assumed closed, etc.   However it is often pointed out that there is
a {\em more general class} (which is usually given the same name), defined purely within in the category of ordered vector spaces,  so there is no norm or topology in the definition.  This class is important partly because it is often easier to construct examples of these, because the definition is so simple.  Then there is a third topological variant considered in \cite{Jam}, and a fourth more restrictive class considered in \cite{Alfsen}.   We will not consider the last two classes because they are not usually relevant to us. E.g.\ many interesting real base norm spaces in our first sense (the sense of  \cite{AE}) are not base norm spaces in the fourth sense.  We call a space $X$ in 
the second class a 
{\em pre-base norm space}.  Then $X$ is known to have a canonical `base norm', however the base and positive cone can fail to be closed 
in this norm.  Thus this class of ordered spaces is strictly larger.   We give the class a name, unlike in the case of
aou spaces, since it is well known that for the latter the cone is automatically closed \cite{AS1}. 
However the `completion' of $X$ 
is a base norm space in the first sense (see  Theorem \ref{xpc} and the lines thereafter).  An exactly similar phenomenon occurs in the noncommutative case, 
as explained above in our description of Section \ref{ncc} there is
a {\em more general} class, the {\em matrix base ordered spaces} which is defined using fewer  axioms.

Turning to background and notation, we will be considering real and complex vector spaces, which may also be normed spaces, ordered vector spaces, or $*$-vector spaces.
We write $\bF$ for either $\bR$ or $\bC$, and $H$ for a Hilbert space. An {\em ordered vector space} is a vector space with a proper positive cone. For an ordered vector space $E$, the cone of positive elements will be denoted by $E_+$. By a {\em $*$-vector space} we mean a vector space with an involution 
(a period 2 automorphism).  If $\bF = \bC$ we assume that the involution is conjugate linear. If $A$ is a set in a $*$-vector space we denote the set of selfadjoint elements in $A$ by $A_{\rm sa}$. 
 We assume that the positive cone of an ordered $*$-vector space $E$ is contained in the selfadjoint part of the space. 
 We recall that the positive cone is called {\em generating} if $E_{\rm sa} = E_+ - E_+$.
 This is equivalent to every element of $E_{\rm sa}$ being dominated by an element of $E_+$. 
 An {\em order unit} for an ordered vector space is an element $e$ such that for all selfadjoint $x$, there exist real $r > 0$ such that $re \ge x$. The order unit is {\em archimedean} if $re + x \ge 0$ for all $r >0$ implies $x$ is positive. A real {\em archimedean order unit space} is a real ordered vector space with an archimedean order unit.   It admits a norm defined by 
$\| a \| = \inf \{ t > 0 : -t e \leq a \leq t e \}$, the {\em order unit norm}.    
 We will discuss {\em complex archimedean order spaces} later.   An example of a real (resp.\ complex) aou space is a unital {\em function space/system}, that is a unital subpace (resp.\ selfadjoint subspace) 
   of $C(\Omega)$ for compact Hausdorff 
 $\Omega$. 
 
We write $M_n(E)$ for the $n \times n$ matrices with entries from a vector space $E$.  If $n$ is infinite and $E$ is an operator space we will mean by $M_n(E)$ the matrices whose   finite submatrices have uniformly bounded norm.  We will write $M_n$ for infinite $n$ for $M_n(\bF)$ in this sense. Given $x \in M_n(E)$, we will sometimes use the notation $\tilde{x}$ to represent the $2n \times 2n$ selfadjoint block matrix with $x$ in the $1$-$2$-block and $x^*$ in the $2$-$1$-block.  

We assume that the reader is familiar with basic convexity theory.  We denote the convex hull of a set $A$ by ${\rm co} (A)$ and the closed convex hull by $\overline{{\rm co}} (A)$.  The {\em absolute convex hull} of a set $A$ is the collection of all `absolute convex combinations' $\sum_{i=1}^n t_ix_i$, where $x_i \in A$ and $t_i \in \bC$ with $|t_1| + \cdots +|t_n| \le 1$.

An \emph{operator space} is a subspace of $B(H)$, the bounded operators on a Hilbert space $H$, or abstractly it is a 
a vector space $E$ with a norm $\| \cdot \|_n$ on $M_n(E)$ for each $n \in \bN$ satisfying the axioms of Ruan's characterization (see e.g. \cite{ERBook}).  If $X \subset B(H)$ is an operator space, then the `matrix norms' above are given by identifying $M_n(B(H))$ with the bounded operators on $n$-fold direct sum of copies of $H$. If $T : X \to Y$ we write $T^{(n)}$ for the canonical `entrywise' amplification taking $M_n(X)$ to $M_n(Y)$, i.e. $T^{(n)}([x_{ij}]) = [T(x_{ij})]$.  
The completely bounded norm is $\| T \|_{\rm cb} = \sup_n \, \| T^{(n)} \|$, and $T$ is completely  contractive if  $\| T \|_{\rm cb}  \leq 1$. A \emph{hyperplane} in a vector space $E$ will be a set of the form $\{x \in E: f(x) = 1\}$ for a linear functional $f$ on $E$. A {\em nc hyperplane} in $E$ is the sequence $(H_n)$ of sets $H_n = \{x \in M_n(E): f^{(n)}(x) = I_n\}$, where $f$ is a fixed linear functional on $E$, and $I_n$ is the $n \times n$ identity matrix.
 
An {\em operator system} is a unital selfadjoint subspace of $B(H)$.  
We denote the identity operator in $M_n(B(H))$ by $I$ or $I_n$.
A map $T$ is said to be {\em positive} if it takes positive elements to positive elements, and  {\em completely positive} if $T^{(n)}$ is  positive for all $n \in \bN$. A {\em ucp map} is  unital, linear, and completely positive. Of course $T$ is {\em selfadjoint} if $T(x^*) = T(x)^*$ for $x \in X$.  This is automatic for completely positive maps between real or complex operator systems.   A \emph{state} on an operator system or aou space $V$ is a (selfadjoint) positive unital (scalar valued) functional,  or equivalently a contractive unital functional, and $S(V)$ is the (compact convex) set of states.  
Any  function system is an (abstract) operator system. 

For general background on operator systems and spaces, we refer the reader to e.g. \cite{Pnbook,P,BLM,DK} and in the real case to e.g.\ \cite{BR,BMcI}.
The theory of complex   $C^*$- and von Neumann algebra theory may be found in e.g.\ \cite{Ped}, and 
 basic real $C^*$- and von Neumann algebra theory in \cite{Li}.  
We write $A(K)$ or $A(K,\bF)$ for the continuous affine scalar functions on a compact convex set $K$, which are unital selfadjoint subspaces of $C(K, \bF)$, the continuous functions on $K$ with values in the field $\bF = \bR$ or $\bC$. We write $\bA(K)$ or $\bA_{\bF}(K)$ for the noncommutative version from \cite{DK,BMcI}. We  write  ${\rm UCP}^\sigma(V, M_n)$ for the collection of weak* continuous ucp maps into $M_n$, that is, the normal matrix state space of $V$.

Complex operator systems are well studied and understood \cite{Pnbook}.  Their connection to matrix convex (resp.\ nc convex) sets in a complex operator space  may be found in \cite{WW,DK,Dav}. The theory of real operator systems may be found in \cite{BR}.  
A (real or complex) {\em matrix convex set} in a (real or complex)  
vector space $E$ is a sequence $K = (K_n)$ of subsets of $M_n(E)$ satisfying 1)\ $x \in K_m$ and $y \in K_n$ implies $x \oplus y \in K_{m+n}$, and
  2)\ $a \in M_{n,m}(\bF)$ with $a^* \, a = I_n$ and $x \in K_n$ implies $a^*xa \in K_m$.   Here $n, m \in \bN$.  We call $K_n$ the $n$th level. 
  Real matrix convex (resp.\ nc convex) sets are studied in e.g.\ \cite{EPS} (resp.\ \cite{BMcI}).   
   If $E$ is a topological vector space, then we say $K$ is closed (compact) if each $K_n$ is closed (compact).
  The matrix state space   $({\rm UCP}(V, M_n))$ of an operator system is the generic example of a compact matrix convex set \cite{WW}. 
A (real or complex) {\em matrix ordered space} is a $*$-vector space $X$ with a proper cone $M_n(X)_+ \subset M_n(X)_{\rm sa}$ such that $\alpha^* M_n(X)_+  \alpha 
\subset M_m(X)_+$ for $\alpha \in M_{n,m}(\bF)$. 
It follows that 
$M_n(X)_+  \oplus M_m(X)_+ \subset M_{n+m}(X)_+$. 
We will say that such $E$ is 
a {\em matrix ordered matrix normed space}
if in addition there is a norm defined at each level,  the matrix cones $M_n(E)_+$ are closed, and $\| x^* \|_n = \| x \|_n$ for $x \in M_n(X)$. An {\em archimedean matrix order unit space} is a matrix ordered space with an archimedean order unit $e$ such that $e \otimes I_n$ is an archimedean order unit for each matrix level.  We will use the abstract characterization  of operator systems as 
matrix ordered matrix normed space with an archimedean matrix order unit
(due to Choi and Effros in the complex case \cite{CE}, see e.g.\ \cite[Section 2]{BR} for discussion of the real case). 

As we near the end of this segment on background information, we highlight Kadison's representation theorem as a crucial piece of the theory. It was originally stated for real aou spaces (see \cite{Alfsen,AS1} and e.g.\ Section 4.3 of  \cite{KRI}), but  it extends immediately to the complex case  (see \cite{PT}, or \cite[Lemma 1.2]{Breg} and the lines after that for a short selfcontained proof). This theorem and its noncommutative analogues provide the dual equivalencies between the category of aou spaces and the category of compact convex sets, and between the category of operator systems and the category of compact matrix convex sets, respectively.

\begin{theorem} {\rm (Kadison's theorem)}  Archimedean order unit real vector spaces (resp.\ complex  $*$-vector spaces) $V$ are exactly (i.e.\ are unitally  order isomorphic to) 
the real (resp.\  complex) function systems.   The subclass of these whose selfadjoint part is complete in the canonical norm coincides  up to unital  order isomorphism with 
 the $A(K)$ spaces, for a compact convex set $K$.
Thus if  $V$ is a  complex $*$-vector space such that $(V_{\rm sa}, V_{+},e)$ is a real Archimedean order unit space, then $V$ is (complex $*$-linearly  unital order  embedded as) a selfadjoint unital subspace of 
         $C(K,\bC)$ for a compact Hausdorff set $K$.   
         Indeed $K$ may be chosen to be 
         convex, and the latter subspace to be $A(K,\bC)$
         if $V_{\rm sa}$ is complete in the order norm. Moreover,   the embedding/isomorphism  may be chosen such that it is a linear isometry with respect to  the norm on $V$ induced by its state space. 
\end{theorem}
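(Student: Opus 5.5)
The argument runs through the state space $K = S(V)$ and the evaluation map. I first treat the real case. Let $(V, V_+, e)$ be a real aou space with order unit norm $\|\cdot\|$.

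\emph{Step 1: $K$ is compact convex.} The states are the positive unital functionals. Positivity together with $-\|a\| e \le a \le \|a\| e$ gives $|\varphi(a)| \le \|a\|$, so states are contractive. Hence $K$ is a weak* closed convex subset of the dual ball, and therefore weak* compact.

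\emph{Step 2: the evaluation map is an order embedding.} Define $\Phi(a)(\varphi) = \varphi(a)$. This is a unital, linear, positive map $V \to A(K) \subset C(K)$.
\begin{itemize}
\item The key point is the converse: if $\varphi(a) \ge 0$ for all states $\varphi$, then $a \ge 0$.
\item Suppose $a \notin V_+$. By the Archimedean property there is $r > 0$ with $a + re \notin V_+$. I will separate $a$ from the cone $V_+$.
\item Rather than invoking a topological separation theorem, I use the order unit directly. Since $e$ is an order unit, it is an algebraic interior point of $V_+$. The Eidelheit/Krein extension (Hahn--Banach in its sublinear-functional form) then gives a positive functional $f$ with $f(a) < 0$. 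Concretely, take the sublinear functional $p(x) = \inf\{t : x \le te\}$ and extend from the span of $a$.
\item Normalize $f$ so that $f(e) = 1$, which is possible since $f \ne 0$ and positivity forces $f(e) > 0$. This $f$ is a state with $f(a) < 0$.
\end{itemize}
The same Hahn--Banach argument applied to $p$ gives $\|a\| = \sup_{\varphi \in K} |\varphi(a)|$. Thus $\Phi$ is isometric and an order isomorphism onto its range. This is the main obstacle: extracting a state from the Archimedean property without any prior topology. The sublinear functional $p$ is the tool, and Archimedeanity is exactly what makes $p(a) \le 0$ equivalent to $a \ge 0$.

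\emph{Step 3: the image is all of $A(K)$ in the complete case.} Conversely, every function system is an aou space with unit $1$, which is immediate. If $V$ is complete, then $\Phi(V)$ is a closed subspace of $A(K)$ that contains the constants and separates the points of $K$. To show $\Phi(V) = A(K)$, I argue by contradiction. If $\Phi(V) \ne A(K)$, Hahn--Banach gives a measure $\mu$ on $K$ annihilating $\Phi(V)$ but not $A(K)$.
\begin{itemize}
\item Write $\mu = \mu_+ - \mu_-$ with equal total masses, since $\mu(1) = 0$.
\item Their barycenters are the functionals $a \mapsto \int \varphi(a)\, d\mu_\pm$. These are states in $K$, and they agree on $V$, so they coincide as points of $K$.
\item Every affine continuous function $g$ satisfies the barycenter formula, so $\int g \, d\mu_+ = \int g \, d\mu_-$. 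Hence $\mu$ annihilates $A(K)$, a contradiction.
\end{itemize}

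\emph{Step 4: the complex case.} Let $V$ be a complex $*$-vector space with $(V_{\rm sa}, V_+, e)$ a real aou space. Each real state on $V_{\rm sa}$ extends uniquely to a selfadjoint complex functional $\varphi(x + iy) = \varphi(x) + i\varphi(y)$, and $K$ is identified with these extensions. The complexified map $\Phi(x + iy) = \Phi(x) + i\Phi(y)$ is a unital $*$-linear order embedding into $C(K,\bC)$, because positivity only involves the selfadjoint part. When $V_{\rm sa}$ is complete, $\Phi(V_{\rm sa}) = A(K,\bR)$ by Step 3. Since $A(K,\bC) = A(K,\bR) + iA(K,\bR)$, the image is then $A(K,\bC)$.

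Finally, give $V$ the norm $\|v\| = \sup_{\varphi \in K} |\varphi(v)|$ induced by the state space. With this norm, $\Phi$ is isometric by definition, since it is the supremum norm on $K$.
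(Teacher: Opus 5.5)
The paper does not prove this theorem itself. It cites \cite{Alfsen,AS1} and \cite{KRI} for the real case, and \cite{PT} or \cite[Lemma 1.2]{Breg} for the complex case. There, as in the proof of Lemma \ref{ist}, one takes $K$ to be the real state space of $V_{\rm sa}$ and complexifies the real embedding into $C(K,\bR)$. Your Steps 1, 2 and 4 are this standard route and are correct. Three small points in Step 2:
\begin{itemize}
\item Any linear $f \le p$ is automatically a state, since $f(\pm e) \le \pm 1$ and $p(-x) \le 0$ for $x \ge 0$. So no normalization is needed.
\item The functional to extend from the span of $a$ is $\lambda a \mapsto -\lambda p(-a)$. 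It is dominated by $p$ because $p(a)+p(-a) \ge p(0) = 0$, and the Archimedean property gives $p(-a) \ge r > 0$.
\item What Archimedeanity gives is $p(-a) \le 0 \Leftrightarrow a \ge 0$, not $p(a) \le 0 \Leftrightarrow a \ge 0$ as you wrote.
\end{itemize}

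Step 3 is where you genuinely depart from the classical argument. The classical proof uses the general fact that, for a compact convex $K$ in a locally convex space $E$, the functions $\ell|_K + r$ (with $\ell \in E^*$, $r \in \bR$) are uniformly dense in $A(K)$. This is proved by separating the disjoint compact convex sets $\{(x,g(x)) : x \in K\}$ and $\{(x,g(x)+\epsilon) : x \in K\}$ in $E \times \bR$. With $E = V^*$ in the weak* topology one has $E^* = V$, so this set of functions is exactly $\Phi(V)$, and completeness finishes the proof with no measure theory.

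Your route through Riesz representation, the Jordan decomposition and barycenters is also valid. However, all of its content sits in the barycenter formula $g(x_\nu) = \int g \, d\nu$ for arbitrary $g \in A(K)$. That formula is often derived from the very density statement above, which in your setting is the conclusion of Step 3. To keep your argument self-contained, justify it independently, for example by a finite intersection argument:
\begin{itemize}
\item For $g_1,\dots,g_n \in A(K)$, the point $(\nu(g_1),\dots,\nu(g_n))$ lies in the compact convex set $\{(g_1(x),\dots,g_n(x)) : x \in K\} \subset \bR^n$. Otherwise separate in $\bR^n$ and use that $\nu$ is a probability measure.
\item By compactness, some $x \in K$ satisfies $g(x) = \nu(g)$ for all $g \in A(K)$ simultaneously.
\item This $x$ equals $x_\nu$, since both agree on every $\Phi(a)$, $a \in V$.
\end{itemize}
With that in place your proof is complete. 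The graph-separation argument is shorter and more elementary. Your version has the conceptual merit of showing directly that every measure annihilating $\Phi(V)$ also annihilates $A(K)$.
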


The just mentioned norm on a complex aou space $V = V_{\rm sa} + i V_{\rm sa}$ is called the 
 {\em minimal order unit norm} or {\em extended order unit norm}.   We will sometimes call a complex aou space with this extended  norm
   a {\em normed complex aou space}.
It will be useful to us that another way to view this norm  is as exactly the 
{\em Taylor norm} of the complexification of $V_{\rm sa}$ 
\cite{MMPS}.   That is,  it is exactly the well known Banach space injective tensor  norm: 

 \begin{lem} \label{ist}  The canonical (minimal order unit) norm on  a complex  aou space $V$  is the injective tensor product norm   from the  isomorphism $V \cong V_{\rm sa} \, \overset{\vee}{\otimes}_{\bR}  \, l^2_2(\bR)$. Moreover, any norm on a complex aou space $V$ inherited from a selfadjoint unital order embedding $\rho$ of $V$ in a $C(K,\bC)$ space, which is
the order unit norm on $V_{\rm sa}$, is this same norm. 
\end{lem}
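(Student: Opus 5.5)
Since states on $V$ are selfadjoint, the whole statement reduces to one formula: for $x = a+ib$ with $a,b \in V_{\rm sa}$,
\[ \|x\| = \sup_{\theta} \|\cos\theta \, a + \sin \theta \, b\|_{V_{\rm sa}} . \]
The plan is to establish this formula for each of the norms in question.

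First I will identify the injective (Taylor) norm. Under $V \cong V_{\rm sa} \otimes_{\bR} \bR^2$ the element $x$ corresponds to $a\otimes e_1 + b\otimes e_2$. Since the unit ball of $(l^2_2(\bR))^*$ is the unit disc, the injective norm of this element is
\[ \sup\{ |\varphi(a)\lambda_1 + \varphi(b)\lambda_2| : \varphi \in {\rm Ball}(V_{\rm sa}^*), \ \lambda_1^2+\lambda_2^2 \le 1\} = \sup_\theta \|\cos\theta \, a + \sin\theta \, b\| . \]

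Next I will compute the norm induced by an embedding $\rho : V \to C(K,\bC)$ that is selfadjoint, unital and an order embedding, and is the order unit norm on $V_{\rm sa}$. Because $\rho$ is selfadjoint, $\rho(a)$ and $\rho(b)$ are real valued. Hence for each $k \in K$,
\[ |\rho(x)(k)| = |\rho(a)(k) + i\rho(b)(k)| = \sup_\theta \bigl(\cos\theta \, \rho(a)(k) + \sin \theta \, \rho(b)(k)\bigr) , \]
using $|z| = \sup_\theta {\rm Re}(e^{-i\theta} z)$. Taking the supremum over $k$ and interchanging the two suprema gives $\|\rho(x)\|_\infty = \sup_\theta \sup_k (\cos\theta\,\rho(a) + \sin\theta\,\rho(b))(k)$. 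The set of $\theta$ is symmetric under $\theta \mapsto \theta+\pi$, which replaces the function by its negative. So the inner supremum may be replaced by the sup norm of the real function $\rho(\cos\theta\, a + \sin\theta\, b)$. By hypothesis on $\rho$ this sup norm equals the order unit norm $\|\cos\theta\, a+\sin\theta\, b\|$. This establishes the formula for any such $\rho$, which is the "moreover" part of the statement.

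Finally I will apply the previous step to the canonical embedding $x \mapsto (\varphi \mapsto \varphi(x))$ of $V$ into $C(S(V),\bC)$ coming from Kadison's theorem, where states are extended complex linearly. This embedding is isometric on $V_{\rm sa}$, because for a real aou space the order unit norm equals $\sup_{\varphi \in S(V)} |\varphi(a)|$. This is the only nontrivial input. It follows from the standard real fact that the real unit ball of $V_{\rm sa}^*$ is the convex hull of $S(V) \cup -S(V)$, or directly from Kadison's theorem as quoted. With this, the minimal order unit norm satisfies the same formula and therefore coincides with the injective norm. Beyond checking that the embedding is isometric on $V_{\rm sa}$, the only point needing care is the supremum interchange in the second step.
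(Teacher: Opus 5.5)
Your argument is correct, but it is organized differently from the paper's sketch. The paper mentions the unit-circle formula $\|x+iy\|_T=\sup\{\|sx+ty\| : s^2+t^2=1\}$ only as one possible route, quoted from the literature. Its actual argument takes $K$ to be the state space and uses that $V_{\rm sa}\subset C(K,\bR)$ isometrically. It then gets the first assertion from injectivity of $\overset{\vee}{\otimes}$ together with the identification $C(K,\bC)=C(K,\bR)\overset{\vee}{\otimes}_{\bR}\bC$. For the second assertion it cites Alfsen (Corollary II.1.4) for isometry of $\rho$ on $V_{\rm sa}$ and then ``complexifies''.

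You instead prove the unit-circle formula directly for both norms: via the dual disc of $l^2_2(\bR)$ for the injective norm, and via $|z|=\sup_\theta {\rm Re}(e^{-i\theta}z)$ pointwise for the sup norm. This amounts to a hands-on proof that $C(K,\bR)\overset{\vee}{\otimes}_{\bR}\bC\cong C(K,\bC)$ isometrically, and that this complexification preserves isometric embeddings. You then deduce the first assertion as the special case of the second in which $\rho$ is the canonical map into $C(S(V),\bC)$, reversing the paper's order. The paper's route is shorter given standard tensor-product facts, and it matches the tensor language used later for the dual Taylor norm. Yours is self-contained and isolates the single real-variable input.

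One small point: the paper treats the clause ``which is the order unit norm on $V_{\rm sa}$'' as a consequence to be proved, not as a hypothesis. It costs one line. Since $\rho$ is unital and $v\ge 0$ iff $\rho(v)\ge 0$, we have $-te\le v\le te$ iff $-t\le\rho(v)\le t$, so $\|v\|=\|\rho(v)\|_\infty$ for $v\in V_{\rm sa}$. The same observation shows that in your last step the only real input is that the canonical map $V_{\rm sa}\to C(S(V),\bR)$ is an order embedding, which is where the archimedean property enters.
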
 

\begin{proof}  We simply sketch this, since there are several ways to see this which the reader might prefer, 
for example using the simple fact \cite{MMPS} that the 
Taylor norm is the `unit circle norm' $\| x + i y \|_T = \sup \{ \| sx + t y \| : 
s^2 + t^2 = 1 \}$. 
Alternatively, suppose that $V_{\rm sa}$ is a unital subspace of $C(K,\bR)$.  We can take $K$ to be the real state space of $V_{\rm sa}$.  Then the canonical embedding of $V$ in $C(K,\bC) = C(K,\bR) \, \overset{\vee}{\otimes}_{\bR}  \, \bC$  is isometric for the canonical norm on  $V$ (e.g.\ see \cite[Lemma 1.2]{Breg}).  This gives the first assertion because the injective   norm is `injective'. 
For the last part, the restriction 
of 
$\rho$ to $V_{\rm sa}$ (and more generally any real unital order embedding of  $V_{\rm sa}$ 
in a $C(K,\bR)$), is an isometry \cite[Corollary II.1.4]{Alfsen}.  Hence so is $\rho$ (e.g.\ by complexifying).  We see that the norm of $\rho(x)+i\rho(y)$ for $x, y \in V_{\rm sa}$ is the above norm.  
\end{proof} 

The next  lemma establishes an identification that will be used repeatedly.

 \begin{lem} \label{sad}  If $V$ is a complex  operator system (or  
 complex  normed $*$-vector space)
 then $(V^*)_{\rm sa} \cong (V_{\rm sa})^*$ 
 real 
 isometrically (and weak* homeomorphically) via the map
 $\psi \mapsto {\rm Re} \, \psi$.  The first `$*$' here is the complex  dual, while the second is
 the real dual.  If $V_{\rm sa}$ is an ordered vector space then 
 the latter map is an order isomorphism.  
\end{lem}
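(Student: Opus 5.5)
We need to show that $\psi \mapsto {\rm Re}\, \psi$ is a real linear bijection from $(V^*)_{\rm sa}$ onto $(V_{\rm sa})^*$. We then check that it is isometric, weak* bicontinuous, and, when $V_{\rm sa}$ is ordered, bipositive. Here $\psi \in V^*$ is selfadjoint when $\psi(x^*) = \overline{\psi(x)}$. For such $\psi$, $\psi$ is real valued on $V_{\rm sa}$, so ${\rm Re}\,\psi$ restricted to $V_{\rm sa}$ is just $\psi|_{V_{\rm sa}}$. It is real linear and bounded with $\|{\rm Re}\,\psi|_{V_{\rm sa}}\| \le \|\psi\|$.

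\emph{Bijection.} Given a bounded real functional $\varphi$ on $V_{\rm sa}$, I define $\psi(x+iy) = \varphi(x) + i\varphi(y)$ for $x,y \in V_{\rm sa}$. This is well defined because $V = V_{\rm sa} \oplus i V_{\rm sa}$ uniquely, with $x = (v+v^*)/2$ and $y = (v - v^*)/(2i)$. One checks directly that $\psi$ is complex linear and selfadjoint, and it is bounded because $v \mapsto x, y$ are bounded, using $\|v^*\| = \|v\|$. It clearly restricts to $\varphi$, and a selfadjoint functional is determined by its values on $V_{\rm sa}$. So the map is a bijection, and the inverse is the complexification $\varphi \mapsto \varphi_{\bC}$.

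\emph{Isometry.} This is the main point: I need $\|\psi\| \le \|\psi|_{V_{\rm sa}}\|$. Given $v \in V$ with $\|v\|\le 1$, I choose a unimodular $\lambda$ with $|\psi(v)| = \psi(\lambda v)$. Put $w = \lambda v$ and $a = (w+w^*)/2 \in V_{\rm sa}$. Since $\psi$ is selfadjoint, $\psi(w^*) = \overline{\psi(w)} = \psi(w)$, which is real, so $\psi(a) = \psi(w) = |\psi(v)|$. Also $\|a\| \le (\|w\|+\|w^*\|)/2 \le 1$ because the involution is isometric. Hence $|\psi(v)| \le \|\psi|_{V_{\rm sa}}\|$. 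The hypothesis $\|v^*\| = \|v\|$ holds for operator systems and is what I take ``normed $*$-vector space'' to mean. This averaging trick is the only step where I expect any delicacy, and it is also why the involution must be isometric.

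\emph{Weak* homeomorphism and order.} For the weak* statement, take a net with $\psi_t \to \psi$ weak*. Then $\psi_t(x) \to \psi(x)$ for all $x \in V_{\rm sa}$. Conversely, if the restrictions converge pointwise on $V_{\rm sa}$, then $\psi_t(x+iy) = \psi_t(x) + i\psi_t(y)$ converges as well. So the map and its inverse are weak* continuous.

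For the order statement, the cone $V_+$ lies in $V_{\rm sa}$. A selfadjoint $\psi$ is positive exactly when $\psi(V_+) \subset [0,\infty)$, which is the same as ${\rm Re}\,\psi|_{V_{\rm sa}}$ being positive. Hence both the map and its inverse preserve positivity, so it is an order isomorphism.
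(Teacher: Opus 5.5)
Your proof is correct, but it is organized differently from the paper's.

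The paper argues structurally. The map $x \mapsto \frac{1}{2}(x+x^*)$ is a real quotient map of $V$ onto $V_{\rm sa}$ whose kernel is the skew elements. Its adjoint is therefore a weak* continuous isometry of $(V_{\rm sa})^*$ onto the annihilator $W \subset (V_r)^*$ of the skew elements, namely the real functionals with $\rho(x^*) = \rho(x)$. The standard isometry $(V_r)^* \cong (V^*)_r$, whose inverse is $\psi \mapsto {\rm Re}\,\psi$, then carries $W$ onto $(V^*)_{\rm sa}$.

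You instead write the inverse explicitly as the complexification $\varphi \mapsto \varphi_{\bC}$, and you prove $\|\psi\| \le \|\psi|_{V_{\rm sa}}\|$ in one direct estimate. Your two moves are exactly the content of the two facts the paper cites:
\begin{itemize}
\item rotating by a unimodular $\lambda$ is the usual proof that $\|{\rm Re}\,\psi\| = \|\psi\|$, i.e.\ of $(V_r)^* \cong (V^*)_r$;
\item the bound $\|\frac{1}{2}(w+w^*)\| \le \|w\|$ is what makes the averaging map a quotient map.
\end{itemize}

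The paper's version is shorter, and it gets the weak* statement and the annihilator description from general duality. Yours is self-contained and makes explicit that an isometric involution is required. The paper uses that assumption only tacitly, when it calls the averaging map a quotient map.
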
 

\begin{proof}  The expression $\frac{1}{2} (x + x^*)$ defines a real
quotient map 
$V \to  V_{\rm sa}$ whose kernel is the skew elements of $V$.   Dualizing we obtain a real  weak* 
continuous complete isometry $(V_{\rm sa})^* \to (V_r)^*$ with range $W$, the annihilator of the  skew elements. Here, the subscript '$r$' indicates that the space is being regarded as a real vector space.
Thus, $W$ consists of the real continuous functionals with $\psi(x^*) = \psi(x)$ for all $x \in V$.
However, $W$ is taken onto $(V^*)_{\rm sa}$ by
the canonical isometry $(V_r)^* \to (V^*)_r$ (see \cite[Proposition 1.1.6]{Li}).  For example, the real part of any $\psi \in (V^*)_{\rm sa}$ is clearly in $W$.
If $V_{\rm sa}$ is an ordered vector space and $\psi \in (V^*)_{\rm sa}$
then $\psi \geq 0$ on the positive cone if and only if ${\rm Re} \, \psi$ is 
(since they agree there). 
\end{proof} 

Finally, we conclude this section with remark 
about the link between the positive cones in 
a normed (or matrix normed) ordered space
$X$ and in its dual, which we will use silently throughout the paper.  In particular it will confirm  that the dualities between operator systems and nc base norm  spaces proved below are perfect.   
Of course it is well known that positive cones $P$ for $X$ induce positive cones $P^*$ for $X^*$, and vice versa.
We already used some of this in the last lemma. 
If $P$ is a cone in a real normed space $X$ (resp.\ in $X^*$) then
the induced cone in $X^*$ (resp.\ $X$) is the negative polar
(resp.\ prepolar), and by the bipolar theorem 
$$(P^{\circ})_{\circ} = \bar{P}.$$  
E.g.\ see 
3.1.7 in \cite{Jam} in the real case. So there is a perfect duality between these cones.  In the complex case a similar statement holds if $X$ is also a $*$-vector space with $X_+ \subset X_{\rm sa}$.  Indeed this follows from the real case since we can just work in the selfadjoint part, by courtesy of Lemma \ref{sad}.  Similar statements hold in the (complex or real) matrix ordered case (and for simplicity if the matrix cones $P$ are closed).  That is $(P^*)_* = P,$ where the set on the left is the cone induced on $X$ by $P^*$. This uses e.g.\ the separation/bipolar
theorem of Effros and Winkler \cite{EW}.    In \cite{BMcI} we checked that the proof of the latter also works in the real case.   Thus we have in both cases:

\begin{prop} If $P$ are the matrix cones for a matrix ordered matrix normed real or complex $*$-vector space then $(P^*)_* = P$ (at every finite matrix level). \end{prop}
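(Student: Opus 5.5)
The inclusion $P \subseteq (P^*)_*$ is immediate from the definitions. Let $P^*$ be the dual matrix cones: $\varphi = [\varphi_{ij}] \in M_n(X^*)_{\rm sa}$ is positive when the scalar pairing $\langle\langle \varphi , x \rangle\rangle = \sum_{i,j} \varphi_{ij}(x_{ij})$ is $\geq 0$ for every positive $x\in M_n(X)$. Then $(P^*)_*$ at level $n$ consists of the selfadjoint $x \in M_n(X)$ with $\langle\langle \varphi , x\rangle\rangle \geq 0$ for all $\varphi \in (P^*)_n$. So any $x\in P_n$ lies in $(P^*)_*$ at level $n$.

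For the reverse inclusion I will fix $n$ and work with the single real (or complex) normed $*$-vector space $M_n(X)$, its closed cone $P_n = M_n(X)_+ \subset M_n(X)_{\rm sa}$, and the trace pairing $\langle\langle\cdot,\cdot\rangle\rangle$. This pairing identifies $M_n(X^*)$ with the Banach dual of $M_n(X)$, with all matrix norms equivalent to any fixed one at a fixed level. Under this identification, the adjoint $*$ on $M_n(X^*)$ corresponds to the usual adjoint functional on $M_n(X)$.

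The first key step is to show that $(P^*)_n$ is exactly the set of selfadjoint functionals on $M_n(X)$ that are nonnegative on $P_n$. By definition it is contained in that set. Conversely, every selfadjoint functional positive on $P_n$ is, as a matrix $[\varphi_{ij}]$, in $M_n(X^*)_{\rm sa}$ and pairs nonnegatively with $P_n$, so it belongs to $(P^*)_n$. Thus at each level we are reduced to the scalar statement that a closed cone in the selfadjoint part of a normed $*$-vector space equals its double dual cone.

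The second step is that scalar statement. In the real case it is the bipolar identity $(P^\circ)_\circ = \bar P = P$ quoted just before the proposition. Concretely, if $x_0 \in M_n(X)_{\rm sa}\setminus P_n$, Hahn--Banach separation of $x_0$ from the closed convex cone $P_n$ gives a real functional $\psi$ with $\psi\ge 0$ on $P_n$ and $\psi(x_0)<0$. Replacing $\psi$ by $\frac12(\psi+\psi\circ *)$ keeps these properties, because $P_n$ and $x_0$ are selfadjoint, and makes $\psi$ selfadjoint. In the complex case I will invoke Lemma \ref{sad} applied to $M_n(X)$. The separating real functional on $M_n(X)_{\rm sa}$ lifts to a selfadjoint complex functional whose real part is $\psi$, and since it agrees with $\psi$ on $P_n$, positivity is preserved. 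So $x_0 \notin (P^*)_*$, which gives $(P^*)_* \subseteq P$.

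The main obstacle is bookkeeping rather than mathematics. I need to check that the matrix-level pairing, namely the trace pairing rather than the matrix pairing $\langle\langle\varphi,x\rangle\rangle = [\varphi_{ij}(x_{kl})]$, is the one defining $P^*$. If the paper instead defines $P^*$ through the matrix pairing in the Effros--Winkler sense, I would first show the two definitions agree. The reason is that $[\varphi_{ij}(x_{kl})]\ge 0$ for all positive $x\in M_m(X)$ and all $m$ is equivalent to trace-positivity: compress by scalar vectors and use the compatibility $\alpha^*P_m\alpha\subset P_n$. Alternatively I would cite the Effros--Winkler bipolar theorem (valid also in the real case by \cite{BMcI}), as indicated in the text.
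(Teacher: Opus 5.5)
Your proof is correct, but it takes a different route from the paper's.

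The paper argues by contradiction using the Effros--Winkler separation theorem for matrix convex sets. It obtains $\psi : X \to M_n$ with ${\rm Re}\,\psi^{(k)} \le I_{nk}$ on every $M_k(X)_+$ but ${\rm Re}\,\psi^{(n)}(v_0) \not\le 0$. It then symmetrizes to $\varphi = \frac{1}{2}(\psi+\psi^*)$ and uses scaling in the cones to see that $-\varphi$ is completely positive, which contradicts $v_0 \in (P^*)_*$.

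You never leave level $n$:
\begin{itemize}
\item Classical Hahn--Banach separates $x_0$ from the closed cone $P_n$, lifted through Lemma \ref{sad} in the complex case.
\item The standard correspondence $\varphi \leftrightarrow F_\varphi(x) = \sum_{i,j}\varphi_{ij}(x_{ij})$ does the rest. For $\xi \in \bF^{mn}$ one has $\langle \varphi^{(m)}(y)\xi,\xi\rangle = F_\varphi(\alpha^* y \alpha)$, where $\alpha \in M_{m,n}$ is built from $\xi$. Hence positivity of $F_\varphi$ on $P_n$, together with $\alpha^* P_m \alpha \subset P_n$, already gives complete positivity of a selfadjoint $\varphi$.
\item The identity $F_\varphi(x) = \langle \varphi^{(n)}(x)e,e\rangle$ with $e = \sum_i e_i \otimes e_i$ gives the easy direction. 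You need it to conclude $x_0 \notin (P^*)_*$ under the paper's matrix-pairing definition.
\end{itemize}
Your index convention is the right one. With $\sum_{i,j}\varphi_{ij}(x_{ji})$ instead, the transpose map on $M_n$ would wrongly pass the test.

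Your version is more elementary and self-contained. It needs only the scalar bipolar theorem and the matrix-order axiom, so it avoids having to verify Effros--Winkler in the real case.

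The paper's version is the one that transfers to the weak*-closed cones at infinite matrix levels, mentioned right after the proposition: one simply replaces Effros--Winkler by the nc separation theorem. By contrast, your identification of $M_n(X)^*$ with $M_n(X^*)$ via a finite trace pairing and the vector $e$ is intrinsically a finite-level device.
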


\begin{proof} The proof 
follows familiar lines.  Clearly $P = (M_n(X)_+) \subset (P^*)_*$.
Suppose that $v_0$ lives in the $n$th level of $(P^*)_*$
but $v_0 \notin M_n(X)_+$. By the Effros and Winkler result above there exists  $\psi : X \to M_n$ satisfying Re $\psi^{(k)} \leq I_{nk}$ on $M_k(X)_+$ for all $k \in \bN$, but Re $\psi^{(n)}(v_0) \nleq 0$.  Let $\varphi = \frac{1}{2}(\psi + \psi^*)$.  Then $\varphi^{(k)} \leq I_{nk}$ on $M_k(X)_+$ for all $k \in \bN$.  In particular, 
for each $y \in M_k(X)_+$ and $\xi \in \bF^{nk}$
 we have 
 $$\langle \varphi^{(k)}(y) \xi , \xi \rangle 
 \leq \| \xi \|^2.$$
Since $M_k(X)_+$ is a cone it follows that 
$\langle \varphi^{(k)}(M_k(X)_+) \xi , \xi \rangle \leq 0$.
Thus $-\varphi$ is completely positive, and we obtain $$(-\varphi)^{(n)}(v_0)
= -{\rm Re} (\psi^{(n)}(v_0)) \geq 0,$$
a contradiction.  \end{proof} 

A similar result $(P_*)^* = P$ holds even at infinite matrix levels if $X = F^*$ is also a dual operator space, and the cones are weak* closed.   The proof is very similar,
but using the nc separation theorem of Theorem 2.4.1 of \cite{DK} (Corollary 16.5.3 of \cite{Dav}), or \cite[Theorem 3.6]{BMcI} in the real case.

\section{Real base norm spaces} \label{rbns} 
 
Suppose that we have an ordered real vector space  $X$  with  positive cone  $X_+ \subset X$. A convex set $K$ in 
 $X_+$ is called a {\em base} for $X_+$ if  for every nonzero $x \in  X_+$ there 
 is a unique scalar $t \geq 0$ and unique $k \in K$  with $x = tk$.  We  will also assume (1) that $X = X_+ - X_+$, or equivalently that  $X$ is real spanned by $K$, so that any $x \in X$
 may be written as $c_1 k_1 - c_2 k_2$ for $c_i \geq 0$ and $k_i \in K$.   This allows us to define an additive positive 
 scalar homogeneous function on $X_+$ by $f_1(x) = t$ if $x = tk$ as above. 
 Then it  is easily checked that $f_1$  extends uniquely to a well defined
 strictly positive (these are sometimes called faithful) real linear functional on $X$ with $f_1( c_1 k_1 - c_2 k_2) = c_1  - c_2$ (\cite[Lemma 2]{Ellis} or \cite[Section 3.9]{Jam}).
 We call 
$f_1$ the {\em base function}. 
 Conversely, if $f$ is a strictly positive real linear functional on $X$ then $f^{-1}( \{ 1 \})$ is a base.
 Let $C = {\rm co}(K \cup (-K))$.  We will also assume (2)\ that $C$ is {\em linearly bounded}: that is for all nonzero $x \in C$ there exists $n \in \bN$ with $nx \notin C$. If all the above hold (that is, if $X$ is an ordered real vector space  $X$  with  positive cone  $X_+ \subset X$ having a base $K$ satisfying (1) and (2))   then 
 we call $X$ a {\em real pre-base norm space}.  This space has a canonical norm on it described as follows. 
 It is well known that the Minkowski functional $p_C$ of  $C$ is a norm $\| \cdot \|$ on $X$ (see \cite[Lemma 3]{Ellis} or \cite[Section 3.9]{Jam}), and that $C$ is a norm dense subset of the 
 closed unit ball in this norm (indeed $C$ contains the open unit ball).  We call
 this norm the {\em base norm}.  Note that 
  $\| x \| = 1$ for $x \in K$ (see the just cited sources). 
 Thus $\| x \|  = f_1(x)$ for $x \in X_+$, and so 
 $f_1$ is continuous, indeed contractive: $|f_1(x - y)| = |\| x \| - \| y \| | \leq \| x - y \|$ for $x, y \in X_+$.
It is clear that  Ball$(X)  \subseteq t \, 
{\rm co} (K \cup (-K) )$ for all $t > 1$.  
Indeed $\| u \| = \inf \, (c_1 + c_2)$, the infimum taken over positive constants $c_i$ with $u = c_1 k_1 - c_2 k_2$ for some $k_i \in K$.  
 If $X_+$ is closed in the base norm then so is $K = X_+ \cap f_1^{-1}( \{ 1 \})$. 

 Although we have defined the base norm, at present our (pre-)base norm spaces are not ordered normed spaces yet in the usual sense.  That is
 $X_+$ may not be closed with respect to the canonical base norm (we have a simple counterexample).
 To fix this, we will show  shortly that the closure of $X_+$  is a proper cone which has the norm closure $\bar{K}$ of $K$ as a base making $X$ a pre-base norm space with closed cone, 
 and that the base norm for this new base is still $p_C$.

 Thus we simply add to our definition of base norm space above  the requirement (3): $X_+$ is closed. That is,
 a {\em real base norm space} is a pre-base norm space for which $X_+$  is closed.
One may equivalently define
a real base norm space to be an ordered normed space $(X, \| \cdot \|)$ with closed cone $X_+$ containing a  convex set $K$ in Ball$(X) \cap 
 X_+$ such that for every nonzero $x \in  X_+$ there 
 is a unique scalar $t \geq 0$ and unique $k \in K$  with $x = tk$, and such that    Ball$(X)  \subseteq t \, 
{\rm co} (K \cup (-K) )$ for all $t > 1$.    Indeed it is then easy to see that (1) and (2) in the definition of
a pre-base norm space hold, and the norm $\| \cdot \|$ agrees with $p_C$ above.

A special case of interest are the {\em dual base norm spaces}.  This is
a base norm space with a Banach space predual such that the base $K$ is 
weak* closed (and hence weak* compact).
If $X$ is a real dual base norm space 
then a simple compactness argument shows that  Ball$(X)  = 
{\rm co} ( K \cup -K )$.   In this case $f_1$ is weak* continuous.   (For 
suppose that we have a bounded net $x_t \to x$ weak*, and we wish to show that every convergent subnet of 
$f_1(x_t)$ converges to $f_1(x)$.  
Write $x_t = c_t k_t - d_t r_t$ for $k_t,  r_t \in K$, and positive $c_t, d_t$ with $(c_t + d_t)$ bounded.
Replacing the net by subnets we may assume that $c_t \to c, d_t \to d, k_t \to k, r_t \to r$, and $x = c k - d r$.
And $f_1(x_t) = c_t - d_t \to c-d$, so $f_1(x_t) \to f_1(x)$.)

 As we said in the introduction, it is well known that base norm spaces and aou spaces are in duality. 
Thus a normed ordered real vector space $F$ is an aou space (with norm agreeing with 
the order unit norm  if  and only if $F^*$ is a real dual  base norm space.
Thus the dual Banach space of a real unital function space is the generic  dual  base norm space.   Equivalently, the dual  base norm spaces are exactly (up to appropriate isomorphism) the spaces $A(K)^*$ for a compact convex set $K$.  The dual  base of $A(K)^*$ is $\delta(K)$, where $\delta : K \to A(K)^*$ is the canonical map. 

Similarly, a normed ordered real vector space $F$ with closed cone is a 
real   base norm space  if  and only if $F^*$ is a dual aou  space.  
Thus the generic  base norm space ``is" the predual of a dual aou space, with the base 
corresponding to the normal state space. See e.g.\ \cite[Corollary of Theorem 6]{Ellis} or \cite[Section 3.9]{Jam}.  For base spaces this is the analogue of Kadison's theorem. 

These facts will be reprised in the later sections.  We include a proof of the following little known result of Ellis \cite{Ellis2} for completeness, and because we will need to generalize it later.

 \begin{thm}
     \label{xpc} Let $(X,X_+)$ be a (real) pre-base norm space with base $K$, with base norm $\| \cdot \|$. Then the closure $\overline{X_+}$ of $X_+$ with respect to this norm is a proper cone in the completion $\bar{X}$.
   Moreover $(\bar{X},\overline{X_+})$ is a base norm space
which has the norm closure $\bar{K}$ of $K$ as a base, and which still has $\| \cdot \|$ as its 
 base norm.  \end{thm}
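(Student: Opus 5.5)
The plan is to use the base function $f_1$, which by the discussion above is a contraction on $(X,\|\cdot\|)$ with $f_1(x)=\|x\|$ for $x\in X_+$. It therefore extends uniquely to a contractive linear functional $\bar f_1$ on the completion $\bar X$. By continuity, $\bar f_1(x)=\|x\|$ for every $x\in\overline{X_+}$, and $\bar f_1 = 1$ on $\bar K$.

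\emph{Properness.} First, $\overline{X_+}$ is a cone, since closures of convex cones are convex cones. If $x$ and $-x$ both lie in $\overline{X_+}$, then $\|x\|=\bar f_1(x)=-\bar f_1(x)$, so $x=0$. Thus $\overline{X_+}$ is proper, and $\bar f_1$ is strictly positive on it.

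\emph{$\bar K$ is a base.} Let $x\in\overline{X_+}$ be nonzero and put $t=\|x\|=\bar f_1(x)>0$. Choose $x_n\in X_+$ with $x_n\to x$; we may take $x_n\neq 0$. Then $\|x_n\|\to t$, so
$$k_n:=x_n/\|x_n\|\in K \quad\text{and}\quad k_n\to x/t.$$
Hence $x/t\in\bar K$. Uniqueness follows from $\bar f_1=1$ on $\bar K$: if $x=sk$ with $k\in\bar K$, then $s=\bar f_1(x)=t$. Convexity of $\bar K$ is clear, and $\bar K=\overline{X_+}\cap \bar f_1^{-1}(\{1\})$.

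\emph{Norm and spanning.} This is the step I expect to be the main obstacle. Let $\bar C={\rm co}(\bar K\cup(-\bar K))$; I must show $p_{\bar C}=\|\cdot\|$ on $\bar X$ and that $\bar X=\overline{X_+}-\overline{X_+}$.

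One inequality is easy. Every element of $\bar C$ has the form $c_1k_1-c_2k_2$ with $k_i\in\bar K$ and $c_1+c_2=1$, so its norm is at most $1$. Hence $\bar C\subseteq{\rm Ball}(\bar X)$ and $\|\cdot\|\le p_{\bar C}$.

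For the reverse inclusion, I show that the open unit ball of $\bar X$ is contained in $\bar C$, by a telescoping argument. Let $\|x\|<1$ and choose $\epsilon>0$ with $\|x\|+\epsilon<1$. Pick $y_1\in X$ with $\|x-y_1\|<\epsilon/4$, and successively $y_n\in X$ with $\|x-(y_1+\dots+y_n)\|<\epsilon/2^{n+1}$. Then $\|y_1\|<\|x\|+\epsilon/4$ and $\|y_n\|<\epsilon/2^{n-1}$ for $n\ge 2$. Using $\|u\|=\inf(c_1+c_2)$ over decompositions $u=c_1k_1-c_2k_2$, write each
$$y_n=a_n k_n-b_n r_n,\qquad k_n,r_n\in K,$$
with $\sum_n(a_n+b_n)<1$. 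Both series $\sum a_nk_n$ and $\sum b_nr_n$ converge absolutely in $\bar X$, since $\|k_n\|=\|r_n\|=1$. Set $a=\sum a_n$ and $b=\sum b_n$. The partial sums $\sum_{n\le N} a_nk_n$ lie in $X_+$, and after normalizing by $\sum_{n\le N}a_n$ they lie in $K$. Passing to the limit, $\sum a_nk_n = a\,k$ for some $k\in\bar K$, and similarly $\sum b_nr_n=b\,r$ with $r\in\bar K$. (If $a=0$ or $b=0$ the corresponding term is simply $0$.) Therefore
$$x=ak-br\quad\text{with}\quad a+b<1,$$
so $x\in\bar C$, using $0\in\bar C$ to absorb the slack $1-(a+b)$.

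This argument shows several things at once. Every element of $\bar X$ lies in $\overline{X_+}-\overline{X_+}$, which gives condition (1). It also gives ${\rm Ball}(\bar X)\subseteq t\bar C$ for all $t>1$, hence $p_{\bar C}\le\|\cdot\|$. Consequently $\bar C$ is linearly bounded, which gives (2), and $p_{\bar C}$ equals the original norm. Together with the closedness of $\overline{X_+}$, the equivalent definition of a base norm space given above now applies, and $(\bar X,\overline{X_+})$ is a base norm space with base $\bar K$ and base norm $\|\cdot\|$.
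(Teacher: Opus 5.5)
Your proof is correct, but it takes a different route from the paper's.

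\textbf{The paper's route.} The paper argues by duality. It notes that $X^*$, with the dual cone, is an aou space with order unit $f_1$. It then invokes the classical base norm/dual aou duality theorem: the predual $\bar X$ with the predual cone $\mathfrak{c}_*$ is a base norm space whose base is the normal state space. Next it identifies $\mathfrak{c}_*=\overline{X_+}$ by the bipolar theorem. Only then does it use the same normalization argument you give, to see that $\bar K=\{x\in\overline{X_+}:\|x\|=1\}$ is that base.

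\textbf{What you do differently.} In the paper, the facts that $\bar X=\overline{X_+}-\overline{X_+}$ and that $\|\cdot\|$ is the Minkowski functional of ${\rm co}(\bar K\cup(-\bar K))$ are inherited wholesale from the duality theorem. You verify them by hand. Your telescoping-series step is exactly what shows that the open unit ball of the completion lies in ${\rm co}(\bar K\cup(-\bar K))$ itself, not merely in its closure. It uses completeness together with convexity and closedness of $\bar K$.

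\textbf{What each approach buys.} Yours is elementary and self-contained: no Hahn--Banach, no bipolar theorem, no duality theorem. It also yields explicit decompositions $x=ak-br$ with $a+b$ arbitrarily close to $\|x\|$. The paper's is shorter given the machinery already in place. More importantly for the paper, it is the template reused for the noncommutative completion in Corollary \ref{xpcnc}, via Corollary \ref{banbnn}, Theorem \ref{banbn2} and Effros--Winkler separation. There a hands-on series argument would have to control sums $\sum_n\alpha_n^*k_n\alpha_n$ and renormalize at finite matrix levels, which is considerably messier.

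\textbf{A small bookkeeping fix.} The bounds you state, $\|y_1\|<\|x\|+\epsilon/4$ and $\|y_n\|<\epsilon/2^{n-1}$ for $n\ge 2$, only give $\sum_n\|y_n\|<\|x\|+5\epsilon/4$, which need not be $<1$. Either of the following repairs it:
\begin{itemize}
\item use the sharper bound $\|y_n\|<3\epsilon/2^{n+1}$ that your construction actually yields, which gives $\sum_n\|y_n\|<\|x\|+\epsilon<1$;
\item choose $\epsilon$ with $\|x\|+2\epsilon<1$.
\end{itemize}
Then pick the decompositions with $a_n+b_n<\|y_n\|+\delta_n$, where $\sum_n\delta_n$ is smaller than the remaining slack.
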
    

\begin{proof} 
Let $(X,X_+)$ be a pre-base norm space, with its base norm.
Define $f \in X^*$ to be positive  if and only if   $f \geq 0$ on $X_+$. 
Then the usual classical argument (for the fact mentioned above that the dual of a base norm space is an aou space) shows that $X^*$ is an aou space
with order unit $f_1$.  Indeed this is an easy exercise
(see e.g.\ Lemma \ref{banbn} for the complex case of this).
Then $\bar{X}$ is a base norm space with the predual cone ${\mathfrak c}_*$ consisting of the $x \in \bar{X}$ with $f(x) \geq 0$ for all $f \in (X^*)_+$. The new base 
is the normal state space. 
(Again, see Lemma \ref{banbn} for the complex case of this.)
Then ${\mathfrak c}_* = \overline{X_+}$
by the geometric Hahn-Banach theorem or bipolar theorem (e.g.\ 3.1.7 in 
\cite{Jam}).  This is of course a proper cone. 
Next we show that $\bar{K}$ is a base for $\overline{X_+}$.  Suppose that $x \in \overline{X_+}$ with $\| x \| = 1$.  Let $x_n \in X_+$ with
$x_n \to x$.  Scaling, we may assume that $\| x_n \| = 1$ for each $n$, so that $x \in \bar{K}$.  That is, 
$\bar{K} = \{ x \in \overline{X_+} : \| x \| = 1 \}$.  Hence if  $s x = t y$ for 
$s, t > 0$ and $x, y \in \bar{K}$ then $s = t$, so that $x = y$.
From this it is easy to see that $\bar{K}$ is a base for $\overline{X_+}$.
\end{proof} 

We call this the {\em base-completion}. 
It follows from this (and its proof), and from elementary topological arguments, that if $X$ is an incomplete pre-base norm space then the closure of $X_+$ in $X$  is a proper cone which has the norm closure $\bar{K}$ of $K$ in $X$ as a base making $X$ a base norm space (with closed cone), 
 and that the base norm and base function for this new base is unchanged. 

\section{Complex base norm spaces} \label{cbns} 
Suppose that $X$ now is a  complex  $*$-vector space with cone 
$X_+ \subset X_{\rm sa}$
such that $(X_{\rm sa},X_+)$ is a real base norm space in the sense above, with base $K \subset X_+$.   We define the {\em extended base norm} or {\em  canonical base norm} on $X$ by \begin{equation}
    \label{l1n} 
    \vert \vert \vert u \vert \vert \vert = \inf  \sum_{k=1}^n \, | \alpha_k | , \end{equation} the infimum taken over all ways to write $u = \sum_{k=1}^n \, \alpha_k \, \varphi_k$ with $\varphi_k \in K, 
\alpha_k \in \bC$.  
We call $X$ together with this norm a {\em complex base norm  space}. 
On the selfadjoint part of $X$ the extended base norm is the usual base norm above.  Indeed for $u \in X_{\rm sa}$ the infimum in (\ref{l1n}) is easily seen to be  dominated  by the Minkowski norm 
$p_C(u)$.  To see the converse, 
suppose that $u = \sum_{k=1}^n \, \alpha_k \, \varphi_k$ with $\varphi_k \in K, 
\alpha_k \in \bC$ with $\sum_k \, |\alpha_k| < 1$.  Then 
$u = \sum_{k=1}^n \, {\rm Re}(\alpha_k) \, \varphi_k$, and it is easy to argue that this lies in $C$, 
so that $p_C(u) \leq 1$. 
The canonical extension (or complexification) of the base function $f_1$ for $X_{\rm sa}$ to $X$ is contractive.  
Indeed this is evident from the inequality 
$$\left|f_1(\sum_{k=1}^n \, \alpha_k \, \varphi_k)\right| = \left|\sum_{k=1}^n \, \alpha_k \right| \leq \sum_{k=1}^n \, |\alpha_k| .
$$  We sometimes simply write this extension as $f_1$, and call it  the {\em base function}.
Then $K$ lies in the associated hyperplane $H = f_1^{-1}(\{ 1 \})$. 

\begin{cor} \label{cubbn} The closed unit ball with respect to  the canonical base norm 
 of a complex base norm space is the closure of  the 
 absolute  convex hull of the base $K$. \end{cor}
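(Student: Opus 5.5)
Let $D$ denote the absolute convex hull of $K$, i.e.\ the set of all $\sum_{k=1}^n \alpha_k \varphi_k$ with $\varphi_k \in K$, $\alpha_k \in \bC$ and $\sum_k |\alpha_k| \le 1$. We show that the closed unit ball $B$ of $\vert\vert\vert \cdot \vert\vert\vert$ equals $\overline{D}$ by proving both inclusions directly from the infimum formula (\ref{l1n}). Both will follow from the fact that $\vert\vert\vert \cdot \vert\vert\vert$ is the Minkowski functional of $D$.

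\emph{Step 1: $\overline{D} \subseteq B$.} By definition, every $u \in D$ has a representation with $\sum_k |\alpha_k| \le 1$, so $\vert\vert\vert u \vert\vert\vert \le 1$. Hence $D \subseteq B$. Since $\vert\vert\vert \cdot \vert\vert\vert$ is a norm, $B$ is closed in it, and so $\overline{D} \subseteq B$. Here we use that (\ref{l1n}) really defines a norm. Subadditivity and homogeneity are immediate from concatenating and rescaling representations. Definiteness follows from the inequality $\| \cdot \| \le \vert\vert\vert \cdot \vert\vert\vert$ on real and imaginary parts: if $u = \sum \alpha_k \varphi_k$, then $\tfrac12(u+u^*) = \sum ({\rm Re}\,\alpha_k)\varphi_k$ lies in $(\sum|\alpha_k|)\,C$, and similarly for the skew part. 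Hence the real base norms of ${\rm Re}\,u$ and ${\rm Im}\,u$ are bounded by $\vert\vert\vert u \vert\vert\vert$, and these vanish only if $u = 0$, because the base norm on $X_{\rm sa}$ is a norm.

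\emph{Step 2: $B \subseteq \overline{D}$.} We first note that every $u \in X$ has at least one representation, so the infimum is finite. Indeed, $X = X_{\rm sa} + i X_{\rm sa}$ and $X_{\rm sa}$ is spanned by $K$ by axiom (1). Now let $\vert\vert\vert u \vert\vert\vert \le 1$ and fix $t > 1$. By the definition of the infimum there is a representation $u = \sum \alpha_k \varphi_k$ with $\sum |\alpha_k| < t$, so $u \in tD$. Therefore $u/t \in D$, and
\[
\vert\vert\vert u - u/t \vert\vert\vert = (1 - 1/t)\,\vert\vert\vert u \vert\vert\vert \to 0 \quad \text{as } t \to 1^{+}.
\]
Hence $u \in \overline{D}$. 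Combined with Step 1 this gives $B = \overline{D}$; in fact it also shows that $D$ contains the open unit ball.

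\emph{Main obstacle.} The only point requiring care is Step 1's claim that (\ref{l1n}) defines a genuine norm rather than a seminorm, which is needed for $B$ to be closed in the topology in which we take the closure. I would handle this via the comparison with $p_C$ on selfadjoint parts described above. The discussion preceding the corollary already shows that the two norms agree on $X_{\rm sa}$, and $X_{\rm sa}$ is a base norm space, so its base norm is definite.
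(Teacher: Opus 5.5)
Your proof is correct and is exactly the (unwritten) argument the paper intends: the paper states this corollary without proof, since (\ref{l1n}) is by construction the Minkowski functional of the absolute convex hull of $K$, so the closed unit ball is the closure of that hull, which already contains the open unit ball. Your definiteness check, which bounds the real base norms of the selfadjoint and skew parts by $\vert\vert\vert u \vert\vert\vert$, correctly fills in a detail the paper leaves implicit; note, though, that the closed ball would be closed even for a seminorm, so this is not really the crux you describe it as.
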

 
It is easy to see that the norm in (\ref{l1n})  on 
a  complex  base norm space 
$E = E_{\rm sa} \oplus i E_{\rm sa}$ is 
the {\em dual Taylor norm} (or Bochnak norm) $\|\cdot \|_{T^*}$ on the complexification of $E_{\rm sa}$.  
 That is, it is the norm induced from the projective tensor product $E_{\rm sa} \, \hat{\otimes} \, l^2_2(\bR)$.  
Thus for $u \in E_{\rm sa} \oplus i E_{\rm sa}$, this norm is explicitly given by the formula $\|u\|_{T^*}=  \inf \, \sum_{k=1}^n \, | \alpha_k | \| \psi_k \|$, where the infimum is taken over all ways to write $u$ as $\sum_{k=1}^n \,  \alpha_k \otimes  \psi_k$ with $\alpha_k \in \bC, \psi_k \in E_{\rm sa}$. 
  Of course the Taylor and dual Taylor norms are in duality; 
indeed it is well known in Banach space theory that the dual of $X \, \hat{\otimes} \, l^2_2(\bR)$ is
 $X^* \, \overset{\vee}{\otimes} \, l^2_2(\bR)$, and similarly with $\hat{\otimes}$ and $\overset{\vee}{\otimes}$ switched.

\bigskip 

 {\bf Remark.} In \cite[Section 4.2]{PT} the authors define what they call the `maximal order seminorm' on an ordered $*$-vector space with order unit using a formula 
 similar to (\ref{l1n}) above, but beginning with the {\em order unit seminorm} on the selfadjoint part of the space. They show that this indeed defines a seminorm on the whole space and that it extends the existing norm on the selfadjoint part to the whole space. Their arguments do not make use of the order unit, and thus are applicable in our setting above
 and could be tweaked to obtain a few facts mentioned above.

\bigskip 

If $X$ is a complex base norm space
then $X$ has a Banach space predual (that is, 
$X$ is a dual  base norm space) if and only if  $X_{\rm sa}$ is a dual real base norm space.  (One direction of this follows by tensor duality because 
$X \cong X_{\rm sa} \, \hat{\otimes} \, l^2_2(\bR)$.   For the other, if $X$ has a Banach space predual $F$ then the space $F_{\rm sa}$ of selfadjoint weak* continuous functionals  is 
a Banach space predual of $X_{\rm sa}$ by Lemma \ref{sad}.)
In this case  we say that $X$ is a {\em complex dual base norm space}.  It follows that 
$f_1$ is weak* continuous.

The natural morphisms $u : (X,K_X) \to (Y,K_Y)$ between   base norm spaces (resp.\ dual  base norm spaces) we will call 
{\em base morphisms} (resp.\ {\em dual base morphisms}: namely  (selfadjoint)  positive   (resp.\  positive and weak* continuous) linear maps  between the base spaces which preserve the base, in the sense that $x \in K_X$ if and only if $u(x) \in K_Y$. 
This is equivalent to the base function of $Y$ composed with 
$u$ being the base function of $X$.  

\medskip

{\bf Example.}  The canonical example of such a dual base is the state space of a unital function system 
$V \subseteq C(K)$.  The main reason why this is a dual base is that if  $\psi \in ({\rm Ball}(V^*))_{\rm sa}$ then by the Hahn-Banach theorem $\psi$ extends to 
a contractive functional on $C(K)$.  By the Jordan decomposition this is a difference of two 
positive functionals whose norms sum to $\| \psi \| \leq 1$.  Thus $\psi = 
c_1 \psi_1 - c_2 \psi_2$ for states $\psi_i \in K$ and $c_i \geq 0$ with $c_1 + c_2 = \| \psi \|$. 
From this it is clear that ${\rm Ball}(V^*)_{\rm sa} = {\rm co} ( K \cup -K )$.
For the remaining details see the proof of Lemma \ref{banaou}.

\bigskip

{\bf Remarks.} 1)\
One may define a {\em complex base ordered space} to be a complex  $*$-vector space $F$ whose selfadjoint part is a real base norm space. 
In this case the base norm on $F_{\rm sa}$ extends to  a unique norm on $F$ with respect to which a  $F$ is a complex  base norm space, namely the dual Taylor norm.  A normed complex  $*$-vector space $F$ which is a complex base ordered space therefore has a canonical equivalent norm with respect to which it is a complex  base norm space.   This is analogous to the situation for aou spaces.    The predual of a von Neumann algebra is a base ordered space, but if we want it to be a base norm space it has to be equivalently renormed.  

\smallskip

2)\ The canonical `complex base norm' that we assigned to a complex base norm space $E$, namely the  dual Taylor norm, is the universal or biggest norm corresponding to the absolutely convex hull of $K$.  Namely it is  the normed complexification of $E_{\rm sa}$ which `contains the biggest-norm 
 absolutely convex hull' of $K$. 
 Indeed if $E_{\rm sa}$ is real isometrically embedded in a complex space $Y$,
 then by the property of the projective tensor product we obtain a contraction $$E \cong 
 E_{\rm sa} \, \hat{\otimes} \, l^2_2(\bR) \to Y \, \hat{\otimes} \,\bC \to Y$$ taking 
 $\psi \otimes \alpha$ to $\alpha \, \psi \in Y$, for $\psi \in E_{\rm sa}, \alpha \in l^2_2(\bR) = \bC$. 
 This contraction takes the absolutely convex hull of $K$ in $E$ onto the absolutely convex hull of $K$ in $Y$.  It also takes the complex (resp.\ real) span of $1 \otimes K$ onto the complex 
 (resp.\ real) span of $K$, which is $Y$ (resp.\ $Y_{\rm sa}$ if $Y$ is also a complex base norm space with base $K$, such as $Y = E$.  
 Moreover this contraction is one-to-one if $Y$ is a $*$-vector space with $K \subset Y_{\rm sa}$.  To see this suppose that $\sum_{k=1}^n \, \alpha_k \, \varphi_k = \sum_{k=1}^m \, \beta_k \, \psi_k$, for $\alpha_k, \beta_k \in \bC, \varphi_k, \psi_k \in K$.  Then $\sum_{k=1}^n \, {\rm Re}(\alpha_k) \, \varphi_k = \sum_{k=1}^m \, {\rm Re}(\beta_k) \, \psi_k$, and similarly with the imaginary parts.   Thus it follows that
 $$\sum_{k=1}^n \, \alpha_k \, \otimes \, \varphi_k = \sum_{k=1}^m \, \beta_k \,  \otimes \, \psi_k, $$ as is easily seen by writing any of the terms $\gamma  \, \otimes \, \xi$ here as $$(1 \cdot {\rm Re}(\gamma) + i {\rm Im}(\gamma))  \, \otimes \, \xi =  1 \, \otimes \, ({\rm Re}(\gamma)) \xi )
\, + \, i   \, \otimes \, ({\rm Im}(\gamma)) \xi ),$$ and then using linearity of $\otimes$ in the second variable to write both sides of the claimed equality in the form  $1 \, \otimes x + i \, \otimes y$.   Here $x$ will be $\sum_{k=1}^n \, {\rm Re}(\alpha_k) \, \varphi_k = \sum_{k=1}^m \, {\rm Re}(\beta_k) \, \psi_k$, and similarly with the imaginary parts. 

Thus we see that the absolutely convex hull of a base $K$ is defined uniquely, independent (up to affine isomorphism) of the containing $*$-vector space, since it is affine isomorphic to the absolutely convex hull of 
$1 \otimes K$ in the projective tensor product.

\begin{lem} \label{banaou} A normed ordered complex $*$-vector space $(F,F_+)$ is a 
complex archimedean order unit space (with norm agreeing with 
the order unit norm on $F_{\rm sa}$) if  and only if $F^*$ is a complex dual  base norm space. 
\end{lem}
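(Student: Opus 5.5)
The plan is to reduce everything to the real theory of Section \ref{rbns}, using Lemma \ref{sad} to pass between $(F^*)_{\rm sa}$ and $(F_{\rm sa})^*$, and the tensor descriptions of the complex norms: the Taylor (injective) norm from Lemma \ref{ist} and the dual Taylor (projective) norm from Section \ref{cbns}.

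For the forward direction, suppose $F$ is a complex aou space whose norm restricts to the order unit norm on $F_{\rm sa}$. We must decide which norm on $F$ is meant; we take it to be the canonical one of Lemma \ref{ist}, so that $F \cong F_{\rm sa}\,\overset{\vee}{\otimes}_{\bR}\, l^2_2(\bR)$. Then by Banach tensor duality $F^* \cong (F_{\rm sa})^*\,\hat{\otimes}\, l^2_2(\bR)$ isometrically. By Lemma \ref{sad}, $(F^*)_{\rm sa}\cong (F_{\rm sa})^*$ real isometrically, weak* homeomorphically and order isomorphically. By the classical real duality recalled in Section \ref{rbns}, $(F_{\rm sa})^*$ is a real dual base norm space whose base is the state space $K=S(F_{\rm sa})$. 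This base corresponds to the complex states of $F$, which are weak* closed.

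It then remains to identify the norm of $F^*$ with the extended base norm (\ref{l1n}). This is exactly the dual Taylor norm on the complexification of $(F^*)_{\rm sa}$, and we have just seen that $F^*$ carries that projective tensor norm. The cone of $F^*$ is the dual cone, which is weak* closed, proper and contained in $(F^*)_{\rm sa}$, and a weak* closed cone is in particular norm closed. A predual is $F$ itself, and the base is weak* compact. Hence $F^*$ is a complex dual base norm space. Alternatively, one could argue directly as in the Example: Hahn--Banach extension to $C(K,\bC)$ followed by the Jordan decomposition gives ${\rm Ball}((F^*)_{\rm sa})={\rm co}(K\cup -K)$.

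For the converse, suppose $F^*$ is a complex dual base norm space with base $K$ and base function $f_1$. The remark after Corollary \ref{cubbn} shows that $(F^*)_{\rm sa}$ is then a real dual base norm space. I will use Lemma \ref{sad} to identify it with $(F_{\rm sa})^*$. Since $f_1$ is weak* continuous, it is given by an element $e\in F_{\rm sa}$.

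Applying the real duality theorem to $F_{\rm sa}$, with the order induced from $F^*$, shows that $F_{\rm sa}$ is a real aou space with order unit $e$, whose order unit norm is the given norm. This needs that $F_+$ coincides with the prepolar of $(F^*)_+$; that follows from the bipolar remark at the end of the introduction, since $F_+$ is assumed closed. Thus $F$ is a complex aou space, and its norm agrees with the order unit norm on $F_{\rm sa}$.

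The main obstacle is the norm bookkeeping on each side. The first point is justifying that the norm on $F$ is, or may be taken to be, the Taylor norm, so that its dual is the projective (dual Taylor) norm. The second is checking that the real duality genuinely uses the dual norm on $(F_{\rm sa})^*$ and not merely an equivalent one. Both are settled by Lemma \ref{sad}, which is a real isometry, together with the Taylor/dual Taylor duality.
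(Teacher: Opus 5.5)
Your route is the paper's: you pass to selfadjoint parts with Lemma \ref{sad}, apply the real Ellis duality, and use the duality between the Taylor and dual Taylor norms. The forward direction is fine. Your explicit choice that $F$ carries the canonical (Taylor) norm of Lemma \ref{ist} is also what the paper assumes, and it is forced, because agreement on $F_{\rm sa}$ alone is not enough. For example, take $F=\bC^2$ with coordinatewise order and involution, normed by the projective norm of $\ell^\infty_2(\bR)\,\hat{\otimes}\,l^2_2(\bR)$. This norm restricts to the order unit norm on $F_{\rm sa}$. But in $F^*$ the functional $(z_1,z_2)\mapsto z_1+iz_2$ has dual norm $\sqrt{2}$, while its extended base norm (\ref{l1n}) is $2$. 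So $F^*$ is not a complex base norm space.

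The gap is in the converse. Having read the left side as ``complex aou space with its canonical norm'', you must also reach that conclusion, but you only show that the given norm agrees with the order unit norm on $F_{\rm sa}$. What you have actually proved is the chain (canonical norm) $\Rightarrow$ ($F^*$ is a complex dual base norm space) $\Rightarrow$ (agreement on $F_{\rm sa}$ only). That chain is not an equivalence under either reading of the statement. Your closing claim that this bookkeeping is ``settled by Lemma \ref{sad}'' does not cover the point: Lemma \ref{sad} only sees selfadjoint parts and says nothing about $\|x+iy\|$.

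The paper finishes with exactly the missing step: the norm on $F^*$ is the dual Taylor norm, so by duality the norm on $F$ is the Taylor norm. Concretely, take $x,y\in F_{\rm sa}$. By Hahn--Banach and Corollary \ref{cubbn},
$\|x+iy\|=\sup\{|\psi(x+iy)| : \psi\in{\rm Ball}(F^*)\}=\sup_{\varphi\in K}|\varphi(x)+i\varphi(y)|$.
Here $K=\{\varphi\in (F^*)_+ : \varphi(e)=1\}$ is the state space of $F$, so this supremum is the minimal order unit norm of Lemma \ref{ist}. With that line added your argument is complete. Your extra care about the closedness of $F_+$, which the paper leaves implicit in ``by the real theory'', is correct.
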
 

\begin{proof}
Suppose $F$ is a complex aou space.  Then the norm on $F$ is given by the extended order unit norm, or Taylor norm, with respect to $F_{\rm sa}$, as pointed out in the Introduction. Moreover, $F_{\rm sa}$ is evidently a real aou space, and so $(F_{\rm sa})^* \cong (F^*)_{\rm sa}$ is a real dual base norm space. We identify $(F^*)_{\rm sa} + i (F^*)_{\rm sa}$ with $(F_{\rm sa} + i F_{\rm sa})^*$ where $\varphi + i \psi$ in the former space is identified in the latter with the complex bounded linear functional  $x + iy \mapsto \varphi(x) - \psi(y) + i(\psi(x) + \varphi(y))$. Since $(F_{\rm sa}\overset{\vee}{\otimes} \, l^2_2(\bR))^* \cong (F_{\rm sa})^*\hat{\otimes} \, l^2_2(\bR)$, the identification above is an isometric isomorphism, so that $(F^*)_{\rm sa} + i (F^*)_{\rm sa}$ is a complex dual base norm space space.  Moreover, it shows that the norm on $F^*$ as a dual space agrees with the dual Taylor norm with respect to $(F^*)_{\rm sa}$. Thus the dual of a complex archimedean order unit space is a complex dual base norm space.  

For the converse, we assume that $F^*$ is a complex dual base norm space.  From the definition of a complex base norm space it is immediate $(F^*)_{\rm sa}$ is a real base norm space.  Since $(F^*)_{\rm sa} \cong (F_{\rm sa})^*$, it is a real dual base norm space. Hence, by the real theory, $F_{\rm sa}$ is an aou space,  and so $F = F_{\rm sa} + i F_{\rm sa}$ with the extended order unit norm (the Taylor norm) is a complex archimedean order unit space.  However this is
precisely the original norm on $F$ by duality, since
 the norm 
on $F^* = (F^*)_{\rm sa} + i (F^*)_{\rm sa}$ is the dual Taylor norm.  
\end{proof}

{\bf Example.} Thus the dual Banach space of a complex function  system is the generic  dual  base norm space.   Equivalently, the dual  base norm spaces are exactly (up to appropriate isomorphism) the spaces $A(K)^*$ for a compact convex set $K$.  The dual  base of $A(K)^*$ is $\delta(K)$, where $\delta : K \to A(K)^*$ is the canonical map. 

From the next result we see that similarly the generic  base norm space ``is" the predual of a dual aou space, with the base 
corresponding to the normal state space. For base spaces this is the analogue of Kadison's theorem. 

\begin{lem} \label{banbn} A 
complete
normed ordered complex $*$-vector space $(F,F_+)$ with $F_+$ closed 
is a 
complex  base norm space  if  and only if $F^*$ is a dual complex archimedean  order unit  space.  The normal state space of the latter corresponds to the base of $F$.
\end{lem}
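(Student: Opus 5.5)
The plan is to reduce everything to the real case via the selfadjoint parts, mirroring the proof of Lemma \ref{banaou}, and then to use tensor duality between the Taylor and dual Taylor norms to control the norm on the whole space.

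First suppose $F$ is a complex base norm space with base $K$. Then $(F_{\rm sa},F_+)$ is a real base norm space. It is complete, since it is closed in $F$: the involution is isometric for the dual Taylor norm. By the real theory recalled in Section \ref{rbns} (Ellis, or \cite[Section 3.9]{Jam}), $(F_{\rm sa})^*$ is a real dual aou space. Its order unit is the base function $f_1$, its cone is the dual cone, and its normal state space is identified with $K$: a weak* continuous state is determined by its values on $K$, and $K$ is exactly $\{x\in F_+ : f_1(x)=1\}$. By Lemma \ref{sad}, $(F^*)_{\rm sa}\cong (F_{\rm sa})^*$ real isometrically and order isomorphically via $\psi\mapsto {\rm Re}\,\psi$. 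Hence $F^*$ is a complex $*$-vector space whose selfadjoint part is a real aou space with order unit the complexified $f_1$.

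It remains to check that the dual norm on $F^*$ is the minimal order unit (Taylor) norm. Here I would use $F\cong F_{\rm sa}\,\hat\otimes\,l^2_2(\bR)$ and the standard duality $(X\,\hat\otimes\,l^2_2(\bR))^*\cong X^*\,\overset{\vee}{\otimes}\,l^2_2(\bR)$. I would verify that this identification is the same one used in Lemma \ref{banaou}: $\varphi+i\psi$ corresponds to $x+iy\mapsto \varphi(x)-\psi(y)+i(\psi(x)+\varphi(y))$. Lemma \ref{ist} then shows the dual norm is the canonical aou norm. Weak* closedness of the cone and the dual aou structure follow because the cone is a polar.

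Conversely, suppose $F^*$ is a dual complex aou space. By Lemma \ref{sad}, $(F_{\rm sa})^*$ is a real dual aou space, and its predual cone is $F_+$, since $F_+$ is closed (bipolar, as in the closing proposition of the Introduction). The real theory then makes $F_{\rm sa}$ a real base norm space, with base the normal states of $F^*$ (equivalently the positive elements of norm one, where norm equals the value of the order unit). By Lemma \ref{ist}, the norm on $F^*$ is the Taylor norm of $(F^*)_{\rm sa}$. Dualizing with the tensor duality above, now in the direction $(X\,\overset{\vee}{\otimes}\,l^2_2)$ predual to $X_*\,\hat\otimes\,l^2_2$, gives that the norm on $F$ is the dual Taylor norm, i.e.\ the extended base norm (\ref{l1n}).

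The main obstacle is this last duality step. The norm of $F$ is recovered from $F^*$ by the bidual embedding, so I must show that the norm of $F$ as a predual equals the dual Taylor norm. Abstractly the two norms have the same dual norm on $F^*$ (the Taylor norm), and both are the Minkowski functionals of weak*-prepolar-determined balls. By the bipolar theorem their closed unit balls coincide, so the norms agree. The care needed is that the closed unit ball of the dual Taylor norm in $F$ is norm closed, so that no closure is lost; Corollary \ref{cubbn} handles this.
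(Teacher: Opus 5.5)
Your proposal is correct and follows essentially the paper's route: pass to selfadjoint parts via Lemma \ref{sad}, use the real (Ellis) duality there, and identify the complex norms by the Taylor/dual Taylor tensor duality, the only difference being that in the direction from a dual aou space to a base norm space the paper also gives a self-contained proof of the real fact (weak* embedding in $\ell^\infty(I)$, normal Hahn--Banach extensions, Jordan decomposition) instead of just citing it. The step you flag as the main obstacle is just Hahn--Banach (the given norm and the dual Taylor norm on $F$ are equivalent and have the same dual norm, hence coincide), so Corollary \ref{cubbn} is not needed; the point that does deserve a word is that $F_{\rm sa}$ is complete, being closed in the complete space $F$, since without completeness the real theorem you cite in the converse direction can fail.
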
 

\begin{proof} Suppose that $E$ is a unital complex function system with a Banach space predual $F$, and $K$ is the `normal state space' in $F$.   It is well known that $F_{\rm sa}$ is a real  base norm space \cite{AE}, however for the readers convenience we give a proof.  We may assume that $E$ is a weak* closed subsystem of a commutative von Neumann algebra $M$ (for example \cite[Corollary 2.2]{BMag} shows this with $M= l^\infty(I)$). 
Note that every selfadjoint weak* continuous functional on $E$ extends to a selfadjoint weak* continuous functional 
$\psi$ on $M$
with close norm.   This is because $E_* \cong M_*/E_\perp$, so that a selfadjoint weak* continuous functional on $E$ extends to a weak* continuous functional $\rho$ on $M$
with close norm. Then $\rho^*$ is also such an extension, so that $\psi= \frac{1}{2}(\rho +\rho^*)$ is a selfadjoint extension
with close norm.  We may write $\rho= c_1 \varphi_1 - c_2 \varphi_2$ with $c_i \geq 0, c_1 + c_2 = \| \rho \|$, and for normal states $\varphi_i$ on $M$ and hence on $E$.  We are using the fact that every positive (weak* continuous) functional on $E$ is a (unique) positive scalar multiple of a (weak* continuous) state.  The hyperplane in $F$ containing $K$  is the one defined by $\psi(1) = 1$. In other words, $f_1$ is evaluation at $1$. 

The norm on $F = F_{\rm sa} + i F_{\rm sa}$ is the 
dual Taylor norm on the complexification of $F_{\rm sa}$, that is, $F_{\rm sa} \, \hat{\otimes} \, l^2_2(\bR)$.  This follows by duality as in the last theorem, since
the norm on $E$ is the Taylor norm.   Thus $F$ is a complex base norm space.  

Next suppose that  $X$ is a complex base norm space.
So $X_{\rm sa}$ is a real base norm space, hence its dual is an aou space.  Thus there exists a convex compact $K$ and 
surjective unital isometry $\rho :  (X_{\rm sa})^* \to A(K,\bR) = A(K,\bC)_{\rm sa}$.  The order unit 
in $(X_{\rm sa})^*$ is a positive (hence selfadjoint) functional on $X$ which is $1$ on the base \cite{AE}. 
There is a unique such functional as we indicated before.  
The norm on $X$ is the `dual Taylor norm' complexification of $X_{\rm sa}$.  Thus the norm on $X^*$ is the
Taylor norm complexification of $(X^*)_{\rm sa}
\cong (X_{\rm sa})^*$.  And the norm on $A(K,\bC)$ is  the
Taylor norm complexification of $A(K,\bR)$. Thus 
the unital isometry $\rho$ extends to a unital surjective selfadjoint isometry $X^* \to A(K,\bC)$. 
   So $X^*$ is a complex aou space.   
\end{proof}

 {\bf Remark.} Alfsen has a definition of base norm spaces in \cite{Alfsen} which is purely order theoretic.  However his definition and result is somewhat restrictive since many interesting base norm spaces in our sense (the sense of  \cite{AE})  are not base norm spaces in Alfsen's sense.  The normal state space of a  von Neumann algebra $M$ does however satisfy his definition on the selfadjoint part.  However this very example is not a complex base norm space in our sense except if $M$ is commutative, nor is its dual a complex normed aou space ($M$ is not a complex function space).

 \bigskip

 The following characterization of dual base norm spaces  is rather trivial (it essentially asserts that $E_*$ is isometric to $A(K)$), however  it is one way to get around 
 the issues we exposited earlier concerning the norm on a complex base norm space.  There is a nc variant of this which we omit  in view of its triviality. 
 
\begin{prop}  Suppose that  $E$ is the Banach space dual of an ordered operator space and $*$-vector space $F$, with the canonical dual ordering. Then $E$ is an ordered $*$-vector space with the canonical dual ordering.  
Suppose that $K$ is a compact convex set in 
the selfadjoint and positive part of ${\rm Ball}(E)$, such that $E = {\rm Span} \, (K)$.   We also assume that every $f \in A(K)$ has a norm preserving linear extension to a  weak* continuous functional on $E$.
 Then $E$ is a dual base norm space with base $K$, and  
 we have a selfadjoint surjective  isometric isomorphism $A(K)^* \cong E$ taking $\delta(k)$ to $k$ for $k \in K$.  \end{prop}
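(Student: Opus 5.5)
The plan is to realize $F$ (isometrically, up to the obvious identification) as $A(K)$ and then dualize. Define $\Phi : F \to A(K)$ by restriction, $\Phi(x)(k) = k(x)$ for $x \in F$, $k \in K$. Each $\Phi(x)$ is affine and weak* continuous on $K$, so $\Phi(x) \in A(K,\bC)$.

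\textbf{Step 1: $\Phi$ is contractive.} Since $K \subseteq {\rm Ball}(E)$, we have $|\Phi(x)(k)| \le \|x\|$, so $\|\Phi(x)\|_\infty \le \|x\|$.

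\textbf{Step 2: $\Phi$ is an isometry onto $A(K)$.} Given $f \in A(K)$, the hypothesis gives an extension $\tilde f \in F$ (a weak* continuous functional on $E$) with $\|\tilde f\| = \|f\|_\infty$ and $\Phi(\tilde f) = f$. So $\Phi$ is surjective. Injectivity holds because $E = {\rm Span}(K)$: if $\Phi(x) = 0$ then $x$ vanishes on $E = F^*$, so $x = 0$. Hence $\tilde f$ is the unique preimage of $f$, and $\|\Phi^{-1}(f)\| = \|f\|$. Thus $\Phi$ is a surjective isometric isomorphism.

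\textbf{Step 3: $\Phi$ is selfadjoint.} Since $K$ lies in $E_{\rm sa}$, we have $k(x^*) = \overline{k(x)}$, so $\Phi(x^*) = \overline{\Phi(x)}$. I expect this step, and the interplay of the involutions on $E$ and $F$ with the canonical dual $*$-structure, to need the most care, though it is routine.

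\textbf{Step 4: dualize.} The adjoint $\Phi^* : A(K)^* \to F^* = E$ is a surjective isometric isomorphism, and it is selfadjoint by Step 3. It sends $\delta(k)$ to the functional $x \mapsto \Phi(x)(k) = k(x)$, that is, to $k$. By the Example after Lemma \ref{banaou}, $A(K)^*$ is a complex dual base norm space with dual base $\delta(K)$, using Lemma \ref{banaou} applied to the complex aou space $A(K)$.

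\textbf{Step 5: transport the structure.} Because $\Phi^*$ is a weak* homeomorphic selfadjoint isometry carrying $\delta(K)$ onto $K$, it transports the whole base norm structure to $E$. In particular $K$ is a weak* compact base for the cone it generates, and the norm on $E$ is the extended base norm.

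It remains to check that this cone agrees with the canonical dual cone $E_+$, i.e. the order-isomorphism claim. Positive elements of $A(K)^*$ are exactly the nonnegative multiples of points of $\delta(K)$, so the positive elements of $E$ should be exactly ${\mathbb R}_+ K$. We have ${\mathbb R}_+ K \subseteq E_+$ since $K$ lies in the positive part. For the reverse inclusion, take $\varphi \in E_+$. Then $\varphi \ge 0$ on $F_+$. If $F_+ \supseteq \Phi^{-1}(A(K)_+)$ held, $\varphi\circ\Phi^{-1}$ would be a positive functional on $A(K)$, hence a multiple of a state, i.e. of some $\delta(k)$. However, we only get $\Phi(F_+) \subseteq A(K)_+$, because $K \subseteq E_+$, and that inclusion is the wrong direction.

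So the cone match cannot be derived from the stated hypotheses this way. Instead, I will read "dual base norm space with base $K$" as referring to the cone ${\mathbb R}_+ K$ transported by $\Phi^*$. I will note that this coincides with the canonical dual ordering exactly when $F_+ = \Phi^{-1}(A(K)_+)$, and is otherwise an ordering contained in it.
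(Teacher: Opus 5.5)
Your argument is correct and is the one the paper intends. The paper gives no written proof, only the remark that the statement ``essentially asserts that $E_*$ is isometric to $A(K)$'', which is exactly your Steps 1--4: the restriction map $\Phi:F\to A(K)$ is injective since $K$ spans $E$, onto and isometric by the norm-preserving extension hypothesis, $*$-preserving since $K\subset E_{\rm sa}$, and satisfies $\Phi^*(\delta(k))=k$.

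Your caveat about the order is also correct, and it exposes an imprecision in the statement rather than a gap in your proof. For example, take $F=\ell^\infty_2$ with the closed, proper, generating cone $F_+=\{(a,b)\in\mathbb{R}^2: a\le 2b,\ b\le 2a\}$, and $K=\{(t,1-t):0\le t\le 1\}\subset E=\ell^1_2$. This satisfies every hypothesis, yet $(-1,2)$ lies in the dual cone $E_+=\{(p,q)\in\mathbb{R}^2: 2p+q\ge 0,\ p+2q\ge 0\}$ but not in $\mathbb{R}_+K$. In general, the bipolar theorem (via Lemma~\ref{sad}) gives $E_+=\mathbb{R}_+K$ exactly when $\overline{F_+}=\Phi^{-1}(A(K)_+)$, which is your condition once $F_+$ is closed.
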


\section{Noncommutative base norm spaces} \label{ncc}

For a nc set $K$ in  a matrix ordered space, our nc version of {\em every positive $x$ is a positive scalar multiple of an element of the base} $K$, is that for each $n \in \bN$, 
every $u \in M_n(E)_+$ is of form $\alpha^* k \alpha$ for $k \in K_n, \alpha \in M_n$, indeed with $\alpha$ positive, since because $n$ is finite, by the polar decomposition 
$\alpha^* k \alpha = |\alpha | U^* k U |\alpha | = |\alpha | k' |\alpha |$ for $k' \in K$.   If this condition holds we shall say that $E$ is {\em based on} $K$.    
It turns out that it is not necessary to usually mention this in the definition below, because it follows from other conditions below.

\begin{definition} \label{ncbns}  Let $E$ be a (real or complex) matrix ordered matrix normed $*$-vector space
(in particular, recall that the matrix cones $M_n(E)_+$ are closed, and that $\| x^* \|_n = \| x \|_n$ for $x \in M_n(X)$).  We will also assume that $E$ is an operator space
(although the later result Theorem \ref{doesd}
shows that it is not necessary to say this, since it follows from conditions 
like the ones that follow). 
Let 
$K$ be a  
(real or complex) matrix convex set in $E$.  Suppose that 1)\ $K_n \subset {\rm Ball}(M_n(E))_+$ for all $n$.  2)\ $K$ is closed at each level.
3)\ For every $t > 1$, every element in Ball$(M_n(E))_{\rm sa}$ is of form
 $x = c_1 x_1 c_1 - c_2 x_2 c_2$ with $c_i$ positive matrices, with $\| c_1^2 + c_2^2 \| \leq t$ (or equivalently, 
 $c_1^2 + c_2^2  \leq t I$) and $x_i \in K_n$.   We think of this as a nc version of the condition Ball$(E)_{\rm sa}  \subseteq  
t \, {\rm co} (K \cup (-K))$, which we had in the classical case above.
Then, 4)\ we assume that $K$ lies in a closed nc hyperplane $H$ not passing through 0, and indeed that $K_n = M_n(E)_+ \cap H_n$ for each $n$.  Indeed we assume that  $H_n = \{ x \in M_n(E) : (f_1)^{(n)}(x) = I_n \}$ for a fixed scalar valued functional $f_1$ on $E$ 
which is positive (positivity is automatic if $E$ is based on $K$).  In the real case we insist that this functional is selfadjoint (in the complex case this is automatic).  By a standard argument
   $f_1$ is completely positive if it is positive. 
 If  1)--4) hold then we say that $E$ is a {\em nc base norm space}. 
\end{definition}

If a matrix ordered space $E$ is based on $K$ and condition 4) above holds then we say that $K$ is a {\em nc base} for $E$, and that $f_1$ in 4) is  the {\em base function}.

 We shall see that shortly that every nc base norm space is based on $K$, so that 3) may be rewritten as:
 $$\| x \|_n = \inf \{ \| (f_1)^{(n)}(y+z) \| : x = y-z, y, z \in M_n(E)_+ \} , \qquad x \in M_n(E)_{\rm sa} .$$  This also uses that  $(f_1)^{(n)}(c_i x_i c_i) = c_i^2$ for $x \in K_n$. 
  It is then easy to see that the base  function is unique, since it is $I$ on $K$ and  $E$ being based on $K$ and 3) imply that $E$ is a noncommutative span ncSpan$(K)$ of $K$ (see \cite[Section 3.1]{Breg}).

\bigskip

{\bf Remark.}  
In the real case unless  the involution on $E$ is trivial (at level 1), that is, it is the identity map,
then the base 
$K_1$ may not span $E$. However as we just said, $E = {\rm ncSpan}(K)$, using  e.g.\ 3) above and the fact that $x = [ I \; 0 ] \tilde{x} [ 0 \; I ]^\tran$. 
Duals of real $C^*$-algebras such as the quaternions or 
$\bC$ (as a real $C^*$-algebra) are particularly interesting here 
because the selfadjoint part may be trivial at level 1.

\begin{lemma} \label{ts}  Suppose that $E$ is a matrix ordered space with nc base $K$ with $K_n \subset M_n(E)_+$  for all $n \in \bN$.  Fix $n \in \bN$.  \begin{enumerate} 
            \item [{\rm (1)}]  Suppose $x \in M_n(E)_{+}$.  Then there exists $t > 0$ and $k \in K_n$ with $x \leq t k$ if and only if   $x = \alpha^* k \alpha$ for $k \in K_m, \alpha \in M_{m,n}$.  One may take $m = n$ and $\alpha = (f_1)^{(n)}(x)^{\frac{1}{2}}$, and in the 
            converse direction one may take $t = \| \alpha \|^2 = \| (f_1)^{(n)} (x) \|$.
  \item [{\rm (2)}]   For all $x \in M_n(E)_{\rm sa}$ there exists $t > 0$ and $k \in K_n$ with $x \leq t k$,  if and only if   $E$ is based on $K$ and  $M_n(E)_{\rm sa} = M_n(E)_+ - M_n(E)_+$. 
  \item [{\rm (3)}]   Definition {\rm \ref{ncbns}}  implies that $E$ is based on $K$, indeed $K$ is a  nc base for $E$.   \end{enumerate} 
\end{lemma}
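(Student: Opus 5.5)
The plan is to prove (1) first, by a direct construction using the base function $f_1$ and condition 4), and then to get (2) and (3) as formal consequences. Throughout, I read ``nc base'' in the hypothesis as condition 4): $K_n = M_n(E)_+ \cap H_n$ with $f_1$ positive, hence completely positive. Whether $E$ is based on $K$ is exactly what (2) and (3) are meant to establish.

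For the converse direction of (1), suppose $x = \alpha^* k \alpha$ with $k \in K_m$ and $\alpha \in M_{m,n}$. First reduce to $m = n$ and $\alpha = c \geq 0$. Write $\alpha = V|\alpha|$ by polar decomposition. If $m < n$, pad $k$ to $k \oplus k_0$ with $k_0 \in K_{n-m}$ and $\alpha$ by zero rows, so that $V$ may be taken to be an isometry. Then $V^* k V \in K_n$ by matrix convexity, and $x = c k' c$ with $c = |\alpha|$ and $k' \in K_n$. Now set $s = \|c\|^2$ and $d = (sI - c^2)^{1/2}$, and define
$$k'' = s^{-1}\,(c k' c + d k' d).$$
This $k''$ is positive, and $(f_1)^{(n)}(k'') = s^{-1}(c^2 + d^2) = I$, so $k'' \in K_n$ by 4). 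Moreover $x = c k' c \leq s\, k''$, which gives $t = \|\alpha\|^2 = \|(f_1)^{(n)}(x)\|$.

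For the forward direction of (1), suppose $0 \leq x \leq tk$ and put $a = (f_1)^{(n)}(x) \geq 0$. Let $q$ be the range projection of $a$ and $p = I - q$, and let $a^{-1/2}$ denote the pseudo-inverse square root. Set
$$k' = a^{-1/2} x a^{-1/2} + p k p .$$
This $k'$ is positive, and $(f_1)^{(n)}(k') = q + p = I$, so $k' \in K_n$. We also have $a^{1/2} k' a^{1/2} = q x q$. So the whole point is to show $x = qxq$, that is, $pxp = 0$ and the off-diagonal corners vanish. I expect this to be the main obstacle, since a positive $y = pxp$ with $(f_1)(y) = 0$ must be shown to be $0$. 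I would first use the domination: $0 \leq pxp \leq t\,pkp$, and $pkp$ lies (after a unitary change of basis, using matrix convexity) in some $K_j$. Then $pkp + s\,pxp \in K_j$ for all $s \geq 0$. I would try to combine this ray with properness of the cone and $tk - x \geq 0$ (for instance, forcing $pkp - s\,pxp \geq 0$ for all $s$), or, failing that, with boundedness of $K_j$ when it is available as in 1) of Definition \ref{ncbns}. Once $pxp = 0$, the $2 \times 2$ positivity argument applied to both $x \geq 0$ and $tk - x \geq 0$ kills the corners $pxq$ and $qxp$.

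Part (2) then follows. If every selfadjoint $x$ satisfies $x \leq tk$, then $x = tk - (tk - x)$ is a difference of positives, and every positive $x$ has the form $\alpha^* k \alpha$ by (1), so $E$ is based on $K$. Conversely, if $x = y - z$ with $y, z \geq 0$ and $E$ is based on $K$, then $x \leq y = \alpha^* k \alpha \leq \|\alpha\|^2 k''$ by (1). For (3), scale $x \in M_n(E)_{\rm sa}$ into the ball and apply condition 3) of Definition \ref{ncbns}: this gives $x = c_1 x_1 c_1 - c_2 x_2 c_2 \leq c_1 x_1 c_1$, which is dominated by a multiple of an element of $K_n$ by (1). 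So (2) applies, $E$ is based on $K$, and together with 4) $K$ is a nc base.
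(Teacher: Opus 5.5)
Your converse direction of (1) is the paper's argument. After scaling, the paper adds $(I-\alpha^*\alpha)^{\frac12}k_2(I-\alpha^*\alpha)^{\frac12}$, which is your $dk'd$; your polar-decomposition reduction is harmless but unnecessary. Your (2) and (3) are the paper's reductions to (1). For the forward direction you use the same element as the paper, which then simply asserts $x=a^{\frac12}za^{\frac12}$. Since $a^{\frac12}za^{\frac12}=qxq$, this tacitly assumes $x=qxq$.

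You are right that this is the crux, but you do not prove it, and the route you try first cannot work. Under your reading of the hypothesis (condition 4) with $f_1$ merely positive) the implication is false. Take $E=\ell^\infty_2$ and $f_1(a,b)=a$, so $K_n=\{(I_n,B):B\ge 0\}$. Then $x=(0,1)\le(1,1)\in K_1$, but $f_1(x)=0$ forces $\alpha=0$ in any representation $x=\alpha^*k\alpha$. Moreover $k-sx=(1,1-s)$ is not positive for $s>1$, so properness plus $tk-x\ge 0$ yields nothing. The missing ingredient is faithfulness of $f_1^{(n)}$ on $M_n(E)_+$. This is assumed in (a) for matrix base ordered spaces. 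Under Definition \ref{ncbns} it follows from your fallback: boundedness of $K$ along the ray $pkp+s\,pxp$ gives $pxp=0$, which is what the application in (3) needs.

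The off-diagonal corners are a second gap, and again no purely order-theoretic $2\times 2$ argument suffices. In a matrix ordered space, $x\ge 0$ and $pxp=0$ need not give $pxq=0$, even with $f_1$ faithful. Let $E=\bC^2$ with coordinatewise involution, $f_1(a,b)=a$, and $E_+=\{(a,b)\in\bR^2:a>0\}\cup\{0\}$. Let $M_n(E)_+$ be the selfadjoint $y$ with $\beta^*y\beta\in E_+$ for all $\beta\in\bC^n$. Then:
\begin{itemize}
\item $f_1$ is faithful at every level;
\item $x=\left[\begin{array}{cc}(1,0)&(0,1)\\(0,1)&0\end{array}\right]$ is positive, and $x\le 2k$ with $k=(1,0)\oplus(1,0)\in K_2$;
\item yet $f_1^{(2)}(x)=e_{11}$ forces any $\alpha$ with $x=\alpha^*k'\alpha$ to have zero second column, contradicting $x_{12}\ne 0$.
\end{itemize}

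What works is closedness. Compress $x$ by $[\xi\;\eta]$ with $\xi\in q\bF^n$ and $\eta\in p\bF^n$ to get $\left[\begin{array}{cc}c&w\\w^*&0\end{array}\right]\ge 0$. Conjugate by $s^{-\frac12}\oplus s^{\frac12}$ and let $s\to\infty$, using that $M_2(E)_+$ is closed (as in Definition \ref{ncbns}). Then conjugate by $1\oplus(-1)$ and use properness to get $w=0$. In the matrix base ordered setting, condition (c) substitutes for closedness; here $f_1(w)=\xi^*a\eta=0$. The inequality $tk-x\ge 0$ plays no role in this step.

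With these two points supplied, your (2) and (3) go through as in the paper.
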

\begin{proof}  (1)\  ($\Rightarrow$)\ Suppose that $x \in M_n(E)_{+}$ and $x \leq tk$ for $t > 0$ and $k \in K_n$.  By scaling we may assume $t = 1$.  Then $0 \leq a = f_1^{(n)}(x) \leq I$. Let $e$ be the support projection of $a$, and let $z = ((e a e)^{-\frac{1}{2}} + e^{\perp}) x  ((e a e)^{-\frac{1}{2}} + e^{\perp})  + e^{\perp} k e^{\perp}$.  Then $$f_1^{(n)}(z) = ((e a e)^{-\frac{1}{2}} + e^{\perp}) a ((e a e)^{-\frac{1}{2}} + e^{\perp}) + e^{\perp} = e + e^{\perp} =  I_n.$$
Thus $z \in K_n$ and $x = a^{\frac{1}{2}} z  a^{\frac{1}{2}}$.   

 ($\Leftarrow$)\  By scaling we may assume $\| \alpha \| \leq 1$.  Then for any $k' \in K_n$ we have by nc convexity that 
 $$\alpha^* k \alpha \leq  \alpha^* k \alpha  + (I-\alpha^* \alpha)^{\frac{1}{2}} k_2  (I-\alpha^* \alpha)^{\frac{1}{2}} \in K_n.$$ 

(2)\  ($\Rightarrow$)\  This follows easily from (1).  Note if $x \leq t k$ then $x = tk - (tk-x)$. 

($\Leftarrow$)\   If $x \in M_n(E)_{\rm sa}$  then $x$ is dominated by a positive element, hence by an element of form $\alpha^* k \alpha$ since  $E$ is based on $K$.  Now apply (1). 

(3)\ Let $x \in M_n(E)_+$.  From Condition  3) we have $x \leq c k c$ for $k \in K_n, c \in M_{n}$.  By (1) above used twice, $x \leq c k c \leq t k'$ for some $k' \in K$, so that $x = \alpha^* k'' \alpha$ for 
$k'' \in K_n, \alpha \in M_{n}$. 
\end{proof}  

{\bf Remark.} We used that $n$ is finite to ensure that $(e a e)^{-\frac{1}{2}}$ is bounded, so that $z$ is well defined. 

\begin{lemma} \label{bn}
If $E$ is a nc base norm space and $n \in \bN$, then for selfadjoint $x \in M_n(E)$,
\begin{equation} 
   \label{sa} \|x\|_n = \inf\{\|\alpha_1^*\alpha_1 + \alpha_2^*\alpha_2\|: \alpha_i \in M_{n}, x_i \in K_n, x = \alpha_1^*x_1\alpha_1 - \alpha_2^* x_2 \alpha_2\}. \end{equation}
Also, if $x$ is positive, then $\| x \|_n = \| f_1^{(n)}(x) \|$ for $x \in M_n(E)_+$.  Moreover $K_n = 
\{ x \in M_n(E)_+ : f_1^{(n)}(x) = I_n \}$.
\end{lemma}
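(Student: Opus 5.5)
The plan is to prove the formula (\ref{sa}) by comparing it with condition 3) of Definition \ref{ncbns}, and to get the two facts about positive elements from the same formula. Write $N(x)$ for the infimum on the right of (\ref{sa}).

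\emph{Step 1: $N(x) \leq \|x\|_n$.} Let $x \in M_n(E)_{\rm sa}$ with $\|x\|_n < 1$, or more generally $\|x\|_n \le 1$. By condition 3), for every $t>1$ we can write $x = c_1 x_1 c_1 - c_2 x_2 c_2$ with $c_i \ge 0$, $x_i \in K_n$ and $c_1^2+c_2^2 \le tI$. Taking $\alpha_i = c_i$ gives $N(x) \le t$. Scaling then gives $N(x) \le \|x\|_n$.

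\emph{Step 2: $\|x\|_n \le N(x)$.} Let $x = \alpha_1^* x_1 \alpha_1 - \alpha_2^* x_2 \alpha_2$ as in (\ref{sa}). Write $x$ as a single compression
$$x = \beta^* (x_1 \oplus (-x_2)) \beta, \qquad \beta = \begin{bmatrix} \alpha_1 \\ \alpha_2 \end{bmatrix} \in M_{2n,n},$$
so that $\beta^*\beta = \alpha_1^*\alpha_1 + \alpha_2^*\alpha_2$. Since $E$ is an operator space, Ruan's axioms give
$$\|x\|_n \le \|\beta\|^2 \, \max(\|x_1\|_n, \|x_2\|_n) \le \|\alpha_1^*\alpha_1+\alpha_2^*\alpha_2\|,$$
where the last inequality uses $K_n \subset {\rm Ball}(M_n(E))$ from condition 1). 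Taking the infimum gives $\|x\|_n \le N(x)$. This is the only place the operator space hypothesis is needed.

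\emph{Step 3: the positive case.} Let $x \in M_n(E)_+$. By Lemma \ref{ts}(3) and Lemma \ref{ts}(1), $x = a^{1/2} z a^{1/2}$ with $z \in K_n$ and $a = f_1^{(n)}(x)$. Choosing $\alpha_1 = a^{1/2}$ and $\alpha_2 = 0$ gives $\|x\|_n \le \|a\|$. For the reverse inequality, take any decomposition $x = \alpha_1^*x_1\alpha_1 - \alpha_2^*x_2\alpha_2$ and apply the completely positive map $f_1$. Using $f_1^{(n)}(x_i) = I$, this gives
$$a = \alpha_1^*\alpha_1 - \alpha_2^*\alpha_2 \le \alpha_1^*\alpha_1+\alpha_2^*\alpha_2.$$
Since $0 \le a$, it follows that $\|a\| \le \|\alpha_1^*\alpha_1+\alpha_2^*\alpha_2\|$. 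Hence $\|a\| \le N(x) = \|x\|_n$, and so $\|x\|_n = \|f_1^{(n)}(x)\|$.

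\emph{Step 4: the identification of $K_n$.} The last assertion, $K_n = \{x \in M_n(E)_+ : f_1^{(n)}(x) = I_n\}$, is exactly condition 4) of Definition \ref{ncbns}, namely $K_n = M_n(E)_+ \cap H_n$. Nothing further is needed.

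\emph{Main obstacle.} The substantive steps are Step 2, which needs the Ruan-axiom estimate on $\beta^*(x_1 \oplus -x_2)\beta$, and the positivity comparison $a \le \alpha_1^*\alpha_1+\alpha_2^*\alpha_2$ in Step 3. Both are short once Lemma \ref{ts} is in hand.
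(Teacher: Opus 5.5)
Your proof is correct and follows the paper's route: (\ref{sa}) comes from condition 3) together with the compression estimate $\|\beta^*(x_1\oplus(-x_2))\beta\|\le\|\alpha_1^*\alpha_1+\alpha_2^*\alpha_2\|$, and $\|x\|_n\le\|f_1^{(n)}(x)\|$ comes from writing $x=a^{1/2}za^{1/2}$ with $z\in K_n$. The one small difference is the reverse inequality for positive $x$: the paper invokes that $f_1$ is a contractive positive (hence completely contractive) functional, whereas you apply $f_1^{(n)}$ to a decomposition in (\ref{sa}) and use $0\le a\le\alpha_1^*\alpha_1+\alpha_2^*\alpha_2$, which is self-contained and avoids checking contractivity of $f_1$ separately.
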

\begin{proof}  Eq.\  (\ref{sa})  follows from the displayed equation before the Remark above Lemma \ref{ts}.  Note 
 if $x = \alpha_1^*x_1\alpha_1 - \alpha_2^* x_2 \alpha_2$ with $\alpha_i \in M_{n}$, and $x_i \in K_n$, then
$$\| x \|_n = \| [\alpha_1^* \; \alpha_2^* ] (x_1 \oplus (-x_2))  [\alpha_1 \; \alpha_2 ]^\tran \| \le \|\alpha_1^*\alpha_1 + \alpha_2^*\alpha_2\| .$$
(A similar argument, if necessary, gives $\| \alpha_1^*\alpha_1 - \alpha_2^*\alpha_2  \| \leq \| \alpha_1^* \alpha_1 + \alpha_2^*\alpha_2 \|$.) 

For the second part, condition 4) above implies that $f_1$ satisfies 
 $K_1 = E_+ \cap f_1^{-1}(\{ 1 \})$.  Then $f_1$ is selfadjoint and positive, and is strictly positive since  $E$ is based on $K$. So, certainly $f_1$ is completely contractive, being a contractive positive functional.  For the reverse inequality, note that if $x \in M_n(E)$ is positive, then $x = ckc$ for $k \in K_n$ and a positive matrix $c$.  Then $\|f_1^{(n)}(x)\| = \|c^2\| \ge \|x\|$, where the last inequality follows from the first part of the lemma.
The last assertion is clear from $K_n = M_n(E)_+ \cap H_n$.
\end{proof} 

{\bf Remark.} That $\| x \| = \| f_1^{(n)}(x) \|$ above can also be viewed as the fact that the norm of a completely positive map on an operator system is the norm of its value at $1$.

\medskip

It is important that the norm in (\ref{sa}) may be written in a couple of different ways.
We have 
$$\|x\|_n = \inf\{\|\alpha_1^*\alpha_1 + \alpha_2^*\alpha_2\|: \alpha_i \in M_{m,n}, x_i \in K_m, x = \alpha_1^*x_1\alpha_1 - \alpha_2^* x_2 \alpha_2, n \leq m \}.$$ 
Indeed this follows from the polar decomposition trick  mentioned at the start of this section, to write $\alpha_i = U_i |\alpha_i|$ with an {\em isometry} $U_i$, and 
using that $U_i^* K_k U_i \subseteq K_n$. 
The norm of a general $x \in M_n(E)$ may be phrased in terms of the norm of a selfadjoint matrix by the usual trick: \begin{equation}
    \label{gen} \|x\|_n = \left\|\left[\begin{array}{cc} 0 & x \\ x^* & 0\end{array}\right]\right\|_{2n}.
\end{equation} 

As in the `classical case' considered in earlier sections, it will also be important to show how a (real or complex) matrix ordered  $*$-vector space $E$ with 
a matrix convex set $K$ (but no norm as yet) may be made into a nc base norm space.  We say that $E$ is a  {\em matrix base ordered space} if 
\begin{enumerate} 
            \item [{\rm (a)}]  We assume that as in 4) of Definition \ref{ncbns},  there is a strictly positive (i.e.\ faithful) 
selfadjoint functional $f_1$ on $E$ such that $K_n = M_n(E)_+ \cap H_n$,
where $H_n = \{ x \in M_n(E) : (f_1)^{(n)}(x) = I_n \}$.  We call $f_1$ the {\em base function} for $K$.  It follows as usual that $f_1$ is completely positive.
   \item [{\rm (b)}]  For every $n \in \bN$ and $x \in M_n(E)_{\rm sa}$,  there exists $t > 0$ and $k \in K_n$ with $x \leq t k$.     \item [{\rm (c)}]   If $x \in M_n(E)_{\rm sa}$ and for 
every  $\epsilon > 0$ we can write $x = y-z$ for $y, z \in M_n(E)_+$ with $f_1^{(n)}(y)$ and $(f_1)^{(n)}(z)$ both of norm $< \epsilon$, then $x = 0$.
\end{enumerate} 
Note that the condition in (b) is equivalent, by Lemma \ref{ts}, to:  $E$ being based on $K$ and $M_n(E)_{\rm sa} = M_n(E)_+ - M_n(E)_+$ 
   for each $n \in \bN$.
We then {\em define} $\| \cdot \|_n$ on 
$M_n(E)_{\rm sa}$ by 
$$\| x \|_n = \inf \{ \| (f_1)^{(n)}(y+z) \| : x = y-z, y, z \in M_n(E)_+ \} , \qquad x \in M_n(E)_{\rm sa} .$$ 
By 
Lemma \ref{ts}, this agrees with the equation  in (\ref{sa}), since $(f_1)^{(n)}(\alpha^* k \alpha) = \alpha^*\alpha$. We then 
define $\| \cdot \|_n$ on all of 
$M_n(E)$ by Equation (\ref{gen}). We will show later that this is well defined.

\begin{theorem} \label{doesd} The expressions $\| \cdot \|_n$
just defined on a matrix base ordered space $E$ are matrix norms with respect to which $E$ is
an operator space and matrix ordered matrix normed $*$-vector space,
and $K$ lies in 
the positive part of the matrix unit ball of $E$.
Also, $f_1$ is (completely) contractive on $E$, and 
$\| x \| = \| f_1^{(n)}(x) \|$ for $x \in M_n(X)_+$. 
Indeed $E$ satisfies all of the conditions to be a nc base norm space except for possibly $M_n(X)_+$ and $K_n$ being closed in the norm topology for $n \in \bN$.  \end{theorem}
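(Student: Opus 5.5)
We will prove Theorem \ref{doesd} by first showing that the formula on selfadjoint matrices is a seminorm compatible with the matrix operations. Then we use condition (c) to get definiteness, and finally check Ruan's axioms for the extension to all of $M_n(E)$ via (\ref{gen}).

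\emph{Step 1 (selfadjoint level).} By (b) and Lemma \ref{ts}, every $x \in M_n(E)_{\rm sa}$ is a difference $y - z$ of positives, so $\|x\|_n < \infty$.

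\begin{itemize}
\item Homogeneity for real scalars and $\|{-x}\|_n = \|x\|_n$ are clear.
\item For subadditivity, add decompositions: if $x = y - z$ and $x' = y' - z'$, then $x + x' = (y+y') - (z+z')$, and
$$\|f_1^{(n)}(y+y'+z+z')\| \le \|f_1^{(n)}(y+z)\| + \|f_1^{(n)}(y'+z')\|.$$
\item For $\alpha \in M_{n,m}$ we have $\alpha^* x \alpha = \alpha^* y \alpha - \alpha^* z \alpha$ and $f_1^{(m)}(\alpha^*(y+z)\alpha) = \alpha^* f_1^{(n)}(y+z)\alpha$. Hence $\|\alpha^* x \alpha\|_m \le \|\alpha\|^2 \|x\|_n$.
\item Direct sums of decompositions give $\|x \oplus x'\| \le \max(\|x\|,\|x'\|)$. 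Compressing by the coordinate isometries gives the reverse inequality, so $\|x \oplus x'\| = \max$.
\end{itemize}

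\emph{Step 2 (definiteness).} Suppose $\|x\|_n = 0$. Then for every $\epsilon > 0$ we can write $x = y - z$ with $\|f_1^{(n)}(y+z)\| < \epsilon$. Since $0 \le f_1^{(n)}(y) \le f_1^{(n)}(y+z)$ ($f_1$ is completely positive by (a)), both $f_1^{(n)}(y)$ and $f_1^{(n)}(z)$ have norm $< \epsilon$. Condition (c) then gives $x = 0$.

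\emph{Step 3 (positive elements).} For positive $x$ we get $\|x\|_n \le \|f_1^{(n)}(x)\|$ by taking $z = 0$. Conversely, $f_1^{(n)}(x) = f_1^{(n)}(y) - f_1^{(n)}(z)$, and $-(a+b) \le a - b \le a + b$ for positive scalar matrices $a, b$. So
$$\|f_1^{(n)}(x)\| \le \|f_1^{(n)}(y+z)\|,$$
which gives equality. The same inequality applied to a general selfadjoint $x$ shows $\|f_1^{(n)}(x)\| \le \|x\|_n$, so $f_1$ is completely contractive on selfadjoint matrices. In particular $\|k\|_n = 1$ on $K_n$.

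\emph{Step 4 (extension to all of $M_n(E)$).} We define $\|x\|_n = \|\tilde{x}\|_{2n}$. For selfadjoint $x$ this must agree with Step 1, which is where well-definedness lies. Let $u = 2^{-1/2}\begin{bmatrix} I & I \\ I & -I \end{bmatrix}$. Then $\tilde{x} = u^*(x \oplus (-x))u$, so the unitary invariance from Step 1 and the direct sum property give $\|\tilde{x}\| = \max(\|x\|,\|{-x}\|) = \|x\|$.

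Next we verify Ruan's axioms.
\begin{itemize}
\item For $\|\alpha x \beta\| \le \|\alpha\|\|x\|\|\beta\|$, note $\widetilde{\alpha x \beta} = \gamma^* \tilde{x} \gamma$ with $\gamma = \beta \oplus \alpha^*$ suitably arranged. This gives the bound $\max(\|\alpha\|,\|\beta\|)^2\|x\|$. Rescaling $\alpha \mapsto t\alpha$, $\beta \mapsto t^{-1}\beta$ and optimizing over $t$ yields $\|\alpha\|\|\beta\|\|x\|$.
\item The direct sum axiom follows because $\widetilde{x \oplus x'}$ is unitarily conjugate to $\tilde{x} \oplus \tilde{x'}$.
\item The triangle inequality and definiteness transfer from the selfadjoint case, since $x \mapsto \tilde{x}$ is real linear and injective.
\item Complex homogeneity uses $\widetilde{\lambda x} = (\bar{\lambda}^{1/2}\text{-type unitary})^*\,\tilde{x}\,(\cdots)$, namely conjugation by ${\rm diag}(\lambda I, I)$ for $|\lambda| = 1$. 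The real case is similar.
\item $\|x^*\| = \|x\|$ holds because $\widetilde{x^*}$ is a permutation conjugate of $\tilde{x}$.
\end{itemize}

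Once these are in place, the remaining claims follow. Complete contractivity of $f_1$ for general $x$ comes from applying Step 3 to $\tilde{x}$. Conditions 3) and 4) of Definition \ref{ncbns} hold by construction together with Lemma \ref{ts}: the infimum form of the norm is exactly condition 3), and $K_n \subset {\rm Ball}(M_n(E))_+$ follows from Step 3.

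The main obstacle is Step 4: confirming that defining the norm through $\tilde{x}$ is consistent on selfadjoints and satisfies the Ruan module inequality with the product $\|\alpha\|\|\beta\|$. Step 1 only gives a bound by the maximum, and the rescaling trick may need care in the real case.
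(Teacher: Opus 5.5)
Your proof is correct and follows essentially the same route as the paper: a norm on selfadjoint matrices built from decompositions (triangle inequality, compression bound, direct sums), definiteness from (c), then extension through $\tilde{x}$, with Ruan's axioms checked by conjugating with block-diagonal matrices and canonical shuffles. Your explicit unitary $u$ with $\tilde{x} = u^*(x \oplus (-x))u$ makes precise the agreement of the two norms on selfadjoint elements, which the paper states only tersely. Your worry about the real case is unfounded, since the rescaling uses only a real $t > 0$.
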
 

\begin{proof} We first show  that $\| \cdot \|_n$ as defined  in 
(\ref{sa}) is a  norm. 
By definition if $x \in M_n(X)_{\rm sa}$ and 
$\| x \|_n = 0$ then $x = 0$.  If $x = \alpha_1^*x_1\alpha_1 - \alpha_2 x_2 \alpha_2$ and $y = \beta_1^*y_1\beta_1 - \beta_2 y_2 \beta_2$, then $$x + y = (\alpha_1^* x_1 \alpha_1 + \beta_1^*y_1\beta_1) - (\alpha_2^* x_2 \alpha_2 + 
\beta^*_2 y_2 \beta_2) =  \gamma_1^* z_1 \gamma_1 - \gamma_2^* z_2 \gamma_2$$ where $\gamma_i^* = [\alpha_i^*  \; \beta_i^* ]$ and $z_i = x_i \oplus y_i \in K$.  Note that 
$$\gamma_1^* \gamma_1 + \gamma_2^* \gamma_2 = 
(\alpha_1^* \alpha_1 + \alpha_2^* \alpha_2) + (\beta_1^* \beta_1 + \beta_2^* \beta_2),$$
from which the triangle inequality holds.  
Note that if $x = \alpha_1^*x_1\alpha_1 - \alpha_2 x_2 \alpha_2 \in M_n(E)_{\rm sa}$ and $\beta \in M_n$ then \begin{equation}\label{saruan1}\| \beta^* x \beta \|  \le  \|(\alpha_1\beta)^*(\alpha_1\beta) + (\alpha_2\beta)^*(\alpha_2\beta)\|
     \le \|\beta^*\| \|\alpha_1^*\alpha_1 + \alpha_2^*\alpha_2\| \|\beta\|.\end{equation}
     Hence $\| \beta^* x \beta \|  \le \|\beta^*\|\|x\|\|\beta\|.$   
So our infimum expression  is a (real) norm on the selfadjoint part. Since 
$$(f_1)^{(n)}(x) = \alpha_1^*(f_1)^{(n)}(x_1)\alpha_1 - \alpha_2 (f_1)^{(n)}(x_2) \alpha_2 = \alpha_1^*\alpha_1 + \alpha_2^*\alpha_2,$$
it is clear that $f_1$ is (completely) contractive on the selfadjoint part. 
From this it is easy to see
that (\ref{gen}) also defines a norm on $M_n(E)$.
One part of this follows for example from 
$\| \beta^* \tilde{x} \beta \|  \le \|\beta^*\|\| \tilde{x} \|\|\beta\|$, with  $\beta = (e^{-i \theta} I) \oplus I$.
It is also easy to see that $f_1$ is (completely) contractive in this norm on $E$.  That $\| x \| = \| f_1^{(n)}(x) \|$ for $x \in M_n(X)_+$ then follows as in Lemma \ref{bn}. 

We check that this definition satisfies Ruan's axioms for the matrix norms of an operator space, which will have as a byproduct that the norms in (\ref{sa}) and (\ref{gen}) coincide on $M_n(E)_{\rm sa}.$   For the second Ruan axiom, suppose that $x \in M_m(E)$ and $y \in M_n(E)$ are both selfadjoint.  
From the equation \begin{equation*} \begin{split} x \oplus y & = (\alpha_1^*x_1\alpha_1 - \alpha_2 x_2 \alpha_2) \oplus (\beta_1^*y_1\beta_1 - \beta_2 y_2 \beta_2) \\ & = (\alpha_1  \oplus \beta_2)^*(x_1 \oplus y_1) (\alpha_1  \oplus \beta_1) - (\alpha_2  \oplus \beta_2)^*(x_2 \oplus y_2) (\alpha_2  \oplus \beta_2), \end{split} \end{equation*}  it is readily shown that $\|x \oplus y \|_{m+n} \le \max\{\|x\|_m, \|y\|_n\}$.  To get the opposite inequality, note that if $x \oplus y = \alpha_1^*x_1\alpha_1 - \alpha_2 x_2 \alpha_2$ then $x = \beta^* \alpha_1^* x_1\alpha_1 \beta - \beta^* \alpha_2^* x_2\alpha_2\beta$ where $\beta^* = [I \;  0]$.
Thus  $$\| x \| \leq \| \beta^* \alpha_1^* \alpha_1 \beta + \beta^* \alpha_2^* \alpha_2 \beta \| \leq \| \alpha_1^* \alpha_1  + \alpha_2^* \alpha_2 \|,$$ so that $\| x \|_m \leq \|x \oplus y \|_{m+n}$, and similarly for $y$. 
For general $x \in M_m(E)$ and $y \in M_n(E)$, let $v = x \oplus y$. Then by a canonical shuffle
$$\| v \| = \| \tilde{v} \| = \| \tilde{x} \oplus \tilde{y} \|
= \max \{ \| \tilde{x} \| , \| \tilde{y} \| \}
= \max \{ \| x \| , \| y \| \}.$$

Using (\ref{saruan1}) above, for a general element $x \in M_n(E)$, and $\alpha, \beta \in {\rm Ball}(M_n)$, we have 
$$\left\|\left[\begin{array}{cc} 0 & \alpha x \beta \\ (\alpha x \beta)^* & 0\end{array}\right]\right\| = \left\|\left[\begin{array}{cc} \alpha & 0 \\ 0 & \beta^* \end{array}\right] \left[\begin{array}{cc} 0 & x \\ x^* & 0\end{array}\right] \left[\begin{array}{cc} \alpha^* & 0 \\ 0 & \beta \end{array}\right]\right\|
\le \| \tilde{x} \|_{2n}.$$ 
From this it is easy to see the first axiom of Ruan holds on $E$. 
Also, by these axioms and a canonical shuffle, 
$\| x^* \| = \| \widetilde{x^*}  \| = \| \tilde{x} \|= \| x \|$ for $x \in M_n(E)$.
If $x = x^*$ then by Ruan's axioms the norms in (\ref{sa}) and (\ref{gen}) coincide.

Since $K_n = M_n(E)_+ \cap (f_1^{(n)})^{-1}(I_n)$, 
if $M_n(E)_+$ is closed then so is $K_n$. 
Of course $K$ is contained in the closed hyperplane defined by $f_1$. 
\end{proof}

\begin{cor} \label{doesdc} Let $(E, \{ \| \cdot \|_n \})$ be a matrix ordered  matrix normed $*$-vector space with a matrix convex set $K$ in 
the positive part of the matrix unit ball of $E$.  Then $E$ is a nc base norm space if and only if  $E$ is a matrix base ordered 
space whose base matrix norms agree with $(\| \cdot \|_n)$
(or equivalently, $(\| \cdot \|_n)$ satisfies {\rm 3)} in the definition of a nc base norm space, as well as Eq.\ {\rm (\ref{sa})}), and $M_n(X)_+$ and $K_n$ are closed in the norm topology for $n \in \bN$.  \end{cor}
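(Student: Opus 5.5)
We prove the two implications separately, relying mainly on Lemma \ref{ts}, Lemma \ref{bn} and Theorem \ref{doesd}.

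($\Rightarrow$) Suppose $E$ is a nc base norm space with base $K$ and base function $f_1$. Closedness of $M_n(E)_+$ and $K_n$ is part of Definition \ref{ncbns}. For the axioms of a matrix base ordered space:

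\begin{enumerate}
\item[(a)] Condition 4) gives $K_n = M_n(E)_+ \cap H_n$ with $f_1$ selfadjoint and positive. Strict positivity holds because, by Lemma \ref{ts}(3), $E$ is based on $K$: if $x \in E_+$ and $f_1(x) = 0$, write $x = \alpha^2 k$ with $k \in K_1$. Then $f_1(x) = \alpha^2$, so $\alpha = 0$ and $x = 0$.
\item[(b)] By condition 3) every selfadjoint $x$ is a difference of positives. Since $E$ is based on $K$, Lemma \ref{ts}(2) gives (b).
\item[(c)] By Lemma \ref{bn}, $\|x\|_n \le \|\alpha_1^*\alpha_1+\alpha_2^*\alpha_2\| = \|f_1^{(n)}(y+z)\|$ whenever $x = y - z$ with $y = \alpha_1^* x_1\alpha_1$ and $z = \alpha_2^* x_2 \alpha_2$. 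Every positive element has this form by Lemma \ref{ts}(1). So if the hypothesis of (c) holds, then $\|x\|_n < 2\epsilon$ for all $\epsilon > 0$, hence $x = 0$ because $\|\cdot\|_n$ is a norm.
\end{enumerate}

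The base matrix norms agree with the given ones: on selfadjoint elements this is Eq.\ (\ref{sa}) of Lemma \ref{bn}, rewritten as the infimum over $x = y - z$ using $f_1^{(n)}(\alpha^*k\alpha) = \alpha^*\alpha$ and Lemma \ref{ts}. On general elements both norms satisfy Eq.\ (\ref{gen}), the given one by Ruan's axioms since $E$ is an operator space.

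($\Leftarrow$) Suppose $E$ is a matrix base ordered space whose base norms from Theorem \ref{doesd} coincide with $(\|\cdot\|_n)$, and suppose the cones and $K_n$ are closed. Theorem \ref{doesd} gives that $E$ is an operator space and a matrix ordered matrix normed $*$-vector space with $K_n \subset \mathrm{Ball}(M_n(E))_+$; this is 1). Condition 2) is the assumed closedness of $K$. Condition 4) is (a). For 3), take $x \in \mathrm{Ball}(M_n(E))_{\rm sa}$ and $t > 1$. Since $\|x\|_n < t$, the infimum in (\ref{sa}) gives $x = \alpha_1^* x_1 \alpha_1 - \alpha_2^* x_2 \alpha_2$ with $\|\alpha_1^*\alpha_1 + \alpha_2^*\alpha_2\| \le t$. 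The polar decomposition trick replaces $\alpha_i$ by $c_i = |\alpha_i|$ and $x_i$ by $U_i^* x_i U_i \in K_n$. Then $c_1^2 + c_2^2 = \alpha_1^*\alpha_1 + \alpha_2^*\alpha_2$, which gives 3).

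The parenthetical equivalence is proved the same way. Condition 3) together with (\ref{sa}) forces the given norms to equal the base norms on selfadjoints, hence everywhere via (\ref{gen}). The remaining axioms (a), (b), (c) follow as in ($\Rightarrow$) once $E$ is known to be based on $K$, which Lemma \ref{ts}(3) supplies.

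The main technical point is the identification of the two infimum formulas for the norm, in (\ref{sa}) and in the ``$x = y - z$'' form. This uses that every positive element is $\alpha^*k\alpha$, with $\|\alpha\|^2 = \|f_1^{(n)}\|$-controlled, which Lemma \ref{ts}(1) provides; everything else is bookkeeping.
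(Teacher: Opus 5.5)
Your proof of the main equivalence is correct and follows the paper's route. You correctly fill in the forward direction, which the paper calls obvious, using Lemma \ref{ts} and Lemma \ref{bn}. The reverse direction rests on Theorem \ref{doesd} exactly as in the paper. For 4) you should add that the nc hyperplane is closed, because Theorem \ref{doesd} makes $f_1$ contractive.

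The step that fails is in the parenthetical reformulation. There you say that 3) and (\ref{sa}) give agreement on selfadjoint matrices, ``hence everywhere via (\ref{gen})''. Nothing in that setting gives (\ref{gen}) for the given norms. They are only assumed to be $*$-invariant matrix norms with closed cones, not operator space norms, so the Ruan argument you used in the forward direction is unavailable. Moreover, 3) and (\ref{sa}) only constrain selfadjoint matrices.

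Here is an example. Let $E=\ell^1_2$ carry its nc base norm structure as the predual of $\ell^\infty_2$ (Theorem \ref{banbn2}). Replace its matrix norms by $\|x\|_n'=\sup_\theta \|{\rm Re}(e^{i\theta}x)\|_n$, where ${\rm Re}\, y=\frac{1}{2}(y+y^*)$. These satisfy $\frac{1}{2}\|x\|_n\le\|x\|_n'\le\|x\|_n$, are $*$-invariant, and coincide with the old norms on selfadjoint matrices. So the matrix base ordered structure, closedness of the cones and of $K$, 3) and (\ref{sa}) all persist. However, $\|(1,i)\|_1'=\sup_\theta(|\cos\theta|+|\sin\theta|)=\sqrt{2}$, whereas the base norm of $(1,i)$ is its $\ell^1$ norm $2$. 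Thus the base norms do not agree, and $E$ is not even an operator space for $(\|\cdot\|_n')$: Ruan's axioms would force $\|x\|_1'=\|\tilde{x}\|_2'=\|\tilde{x}\|_2=2$.

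So (\ref{gen}) has to be an explicit hypothesis in the parenthetical; the paper's one-line argument evidently intends (\ref{gen}) in its final citation. On selfadjoint matrices it is (\ref{sa}), not 3), that yields equality, since 3) alone only shows that the base norm is dominated by the given norm. Finally, your claim that (a)--(c) can be recovered in that setting presupposes a base function $f_1$. Such an $f_1$ exists only if the matrix base ordered structure is kept as a hypothesis, so the parenthetical should be read as replacing just the norm-agreement clause.
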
 

\begin{proof} The one direction is obvious. If the conditions after the `if and only if' hold, then by Theorem \ref{doesd}, 
$E$ is an operator space, and indeed is a nc base norm space.  
Note that if {\rm 3)} in Definition \ref{ncbns} holds, then the  base matrix norms agree with $(\| \cdot \|_n)$ on selfadjoint matrices.  Thus they agree on all matrices if in addition  {\rm (\ref{sa})} 
holds for $(\| \cdot \|_n)$.   \end{proof}

\begin{definition} \label{dncbns} Suppose that  $E$ is the operator space dual of a 
matrix ordered matrix normed  
operator space and  $*$-vector space $F$, with the canonical dual matrix ordering. Then $E$ is a dual operator space and matrix ordered $*$-vector space with the canonical dual ordering.  
A {\em matrix dual base} (resp.\ {\em nc dual base} for $E$ is a matrix convex  set 
(resp.\ nc convex set in the sense of \cite{DK}) $K$ in 
the selfadjoint and positive part of the matrix unit ball of $E$, such that: 
1)\ $K$ is compact (at each level). 
2)\ Every $u \in M_n(E)_+$ is of form $\alpha^* k \alpha$ for $k \in K_n, \alpha \in M_n$, indeed with $\alpha$ positive, as above in the non-dual case. So $E$ is based on $K$. 
3)\ 
For all $n$, 
 Ball$(M_n(E))_{\rm sa}$ consists of the expressions $x = c_1 x_1 c_1 - c_2 x_2 c_2$ with $c_i$ positive  matrices with $c_1^2 + c_2^2  \leq I$ (or equivalently, $\| c_1^2 + c_2^2 \| \leq 1$) and $x_i \in K_n$.   
Again, we think of this as a nc version of the condition Ball$(E)_{\rm sa}  = 
{\rm co}( K \cup (-K))$.
Note that $\| x \| \leq \| c_1^2 + c_2^2 \|$ as in  the calculation in the proof of Lemma \ref{bn}.
Finally, as before, 4)\ we assume that $K$ lies in a  nc hyperplane not passing through 0, indeed that $K_n = M_n(X)_+ \cap H_n$ for each $n$, but we insist that the hyperplane is weak* closed, so that the functional $f_1$ defining the hyperplane is weak* continuous.   In the real case we also assume that $f_1$ is selfadjoint.  
If  1)--4) hold  
then we say that $E$ is a {\em matrix dual base norm space} (resp.\ {\em nc dual base norm space}).

We saw that 2) follows from 3) and 4) if $n \in \bN$ if $f_1$ is positive.  Thus 2)\ is automatic in the matrix convex  set 
case  if $f_1$ is positive.   It is similarly automatic (including at infinite levels) in the nc convex case.  This follows from Theorem \ref{banaou2} below and its proof, which only uses the finite $n$ version of the definitions above. 
We are also using the last assertion in the Introduction.  
\end{definition}

{\bf Remarks.}  1)\ In the definition above of a {\em dual base} we used either a matrix convex set or a nc convex set $K$.   
We will show later that these are equivalent; every matrix dual base can be canonically augmented to be a nc dual base.  For nc bases in our earlier sense that are not nc dual bases there are reasons to avoid
 (or be more careful with) infinite levels (see the proof of Theorem \ref{banbn2}). 
 
 \medskip
 
 2)\ It is not hard to argue from ideas mentioned above that  $E$ in the definition is a nc dual base norm space (that is, satisfies 1)--4))  if and only if it is a nc  base norm space
 and $K$ is weak* compact.  A similar assertion holds for  matrix dual base norm spaces.   We omit the proof since it proceeds via the proof of the later result Theorem \ref{banaou2}.

\medskip

As in Section 3, the natural morphisms $u : (X,K_X) \to (Y,K_Y)$ between  nc base norm spaces (resp.\ nc dual  base norm spaces) we will call 
{\em nc base morphisms}: namely  (selfadjoint) completely positive   (resp.\ completely  positive and weak* continuous) linear maps  between the nc base spaces which preserve the base, in the sense that $x \in K_X$ if and only if $u(x) \in K_Y$. 
This is equivalent to the nc base function of $Y$ composed with 
$u$ being the base function of $X$.   In the matrix case these are exactly what are called {\em quantum channels} by physicists, or CPTP maps ({\em completely positive trace preserving}). 

\begin{lem} \label{isa} A nc dual base norm space is a 
matrix dual base norm space, and is a nc base norm space.  If $X$ is a complex nc base norm space 
with base $K$ then $X$ is a base ordered space in the sense of Remark 1 before Lemma {\rm \ref{banaou}}, with base $K_1$. 
Indeed the nc base norm at level 1 on $X_{\rm sa}$ is exactly the base norm induced by $K_1$  on $X_{\rm sa}$.   Similarly the selfadjoint part of a nc (or matrix) dual base norm space is a real dual base norm space. 
\end{lem}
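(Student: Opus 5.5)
The plan is to check the four claims of Lemma \ref{isa} one at a time, comparing the definitions directly.

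\emph{Nc dual implies matrix dual and nc base norm.} Restricting a nc dual base $K$ to its finite levels $(K_n)_{n\in\bN}$ gives a matrix convex set, since nc convexity contains the direct sum and isometric compression axioms. Conditions 1)--4) of Definition \ref{dncbns} at finite levels are exactly the matrix dual conditions, so $E$ is a matrix dual base norm space. For Definition \ref{ncbns}: weak* compactness of $K_n$ gives norm closedness, which is condition 2). The matrix cones are weak* closed, being dual cones, hence norm closed. The weak* continuous $f_1$ is norm continuous, so the nc hyperplane is closed; this gives 4). Condition 3) of Definition \ref{dncbns}, with $c_1^2+c_2^2\le I$, implies condition 3) of Definition \ref{ncbns} for every $t>1$. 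Finally $E$ is an operator space, being an operator space dual.

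\emph{Level one of a complex nc base norm space.} Let $X$ be a complex nc base norm space. By Lemma \ref{ts}(3) it is based on $K$. At $n=1$ this says every $u\in X_+$ has the form $\alpha^*k\alpha=|\alpha|^2k$ with $k\in K_1$. This decomposition is unique because $f_1(u)=|\alpha|^2$ and $K_1=X_+\cap f_1^{-1}(1)$. So $K_1$ is a base, convex since matrix convexity gives convexity at level 1. Condition 3) at $n=1$ with $x_1,x_2\in K_1$ gives $x=c_1^2x_1-c_2^2x_2$ with $c_1^2+c_2^2\le t$, so
\[
{\rm Ball}(X_{\rm sa})\subseteq t\,{\rm co}(K_1\cup -K_1)\quad\text{for all } t>1 .
\]
This yields generation $X_{\rm sa}=X_+-X_+$ and linear boundedness, since $C={\rm co}(K_1\cup -K_1)\subseteq {\rm Ball}$ by Lemma \ref{bn}. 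The cone $X_+$ is closed by assumption. Hence, by the equivalent formulation of real base norm spaces in Section \ref{rbns}, $(X_{\rm sa},X_+)$ is a real base norm space with base $K_1$, and its norm equals $p_C$. So $X$ is a complex base ordered space. The norm identity also follows directly from Eq.\ (\ref{sa}) at $n=1$, whose right side is $\inf(c_1+c_2)$, which is the classical formula for $p_C$.

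\emph{Selfadjoint part of a dual space.} For a nc or matrix dual base norm space, the previous paragraph applies, now with $t=1$. The remaining requirement is a real Banach predual in which $K_1$ is weak* closed. Take $F_{\rm sa}$ as the predual of $E_{\rm sa}$, by Lemma \ref{sad} in the complex case and directly in the real case (weak* continuous selfadjoint functionals). Weak* compactness of $K_1$ in $E$ gives weak* closedness in $E_{\rm sa}$ for $\sigma(E_{\rm sa},F_{\rm sa})$. The identification of these two topologies on $E_{\rm sa}$ is the only point needing care.

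The step I expect to be the main obstacle is the real case of this last claim, where level 1 may carry a nontrivial involution. There $K_1$ only spans $E_{\rm sa}$, not $E$, so everything must be phrased on $E_{\rm sa}$ throughout.
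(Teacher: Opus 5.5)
Your proposal is correct and follows the paper's argument. The paper explicitly proves only that the selfadjoint weak* continuous functionals $F_{\rm sa}$ form a Banach predual of $E_{\rm sa}$ (via Lemma \ref{sad}) and calls the rest clear; you carry out that predual step and also supply the routine checks the paper omits (Lemma \ref{ts}(3) for the base property at level $1$, condition 3) at $n=1$, and the equivalent formulation of real base norm spaces in Section \ref{rbns}). Your direct treatment of the real predual also works, since $x \mapsto \frac{1}{2}(x+x^*)$ is a contractive projection of $F$ onto $F_{\rm sa}$ whose kernel (the skew part) is annihilated by $E_{\rm sa}$, so restriction gives a weak* homeomorphic isometry $E_{\rm sa} \cong (F_{\rm sa})^*$.
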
 

\begin{proof}  For the last assertion,  the selfadjoint part of a nc dual base norm space $X$ is a dual space.  Indeed if $X$ is complex and has a Banach space predual $F$ then the space $F_{\rm sa}$ of selfadjoint weak* continuous functionals  is 
a Banach space predual of $X_{\rm sa}$ by Lemma \ref{sad}.  The rest is clear. 
 \end{proof} 
 
 The last result  is analogous to the aou space case, where an operator system is also an aou space, with order norm agreeing with the operator system norm on $X_{\rm sa}$.  

\bigskip
 
{\bf Remark.} Unlike the classical case, the matrix norms on the positive cones of a nc base norm space are not additive.  For example if $u_{ii}(x) = x_{ii} I_2$
for $x \in X = M_2$, then viewing $u_{ii} \in M_2(X^*)_+$ we do not have 
$\| u_{11} + u_{22} \| = \| u_{11} \| + \| u_{22} \|$.

The predual of a unital von Neumann algebra is a complex  nc base norm space.  More generally we have:

\begin{thm} \label{banbn2} A complete 
matrix ordered matrix normed complex  (resp.\ real) $*$-vector space $F$ is a 
complex (resp.\ real)  nc base norm space  if  and only if $F^*$  is a dual complex (resp.\ real) operator system.  Equivalently, if  and only if 
$F^*$  is a matrix order unit space with order unit coming from the nc hyperplane containing the nc base of $F$.  Moreover, this can be done with the dual operator space matrix norms for $F^*$ agreeing with 
the matrix order unit norms.
\end{thm}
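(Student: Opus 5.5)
We prove both directions by mirroring the classical argument of Lemma \ref{banbn}, replacing Hahn--Banach and Jordan decomposition with their matrix analogues. The unit is $f_1$ in one direction and evaluation at $1$ in the other; the matrix cones are matched by duality via the Proposition at the end of the Introduction.

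\emph{($\Rightarrow$)} Let $F$ be a nc base norm space with base $K$ and base function $f_1$. Give $F^*$ the dual matrix cones: $[\varphi_{ij}] \in M_n(F^*)_+$ if and only if the map $x \mapsto [\varphi_{ij}(x)]$ is completely positive $F \to M_n$. Take $e = f_1$, which is completely positive by 4). We then check four things.
\begin{enumerate}
\item \emph{Order unit.} For $\varphi \in M_n(F^*)_{\rm sa}$ with $\|\varphi\|_{\rm cb} \le 1$ and $k \in K_m$, the element $\varphi^{(m)}(k)$ is selfadjoint of norm at most $1$, since $k$ lies in the matrix unit ball. Hence $\varphi^{(m)}(k) \le I = (f_1 \otimes I_n)^{(m)}(k)$. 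Because $F$ is based on $K$ (Lemma \ref{ts}(3)), every positive element of $M_m(F)$ has the form $\alpha^* k \alpha$. It follows that $f_1 \otimes I_n - \varphi$ is completely positive, so $\varphi \le e \otimes I_n$. The same computation gives $-e\otimes I_n \le \varphi$.
\item \emph{Archimedean.} If $r\, e\otimes I_n + \varphi \ge 0$ for all $r>0$, then evaluating on $M_m(F)_+$ and letting $r\to 0$ shows $\varphi$ is completely positive. The cones are weak* closed.
\item \emph{Norms agree.} By the first step, the order unit norm is at most the cb norm. For the reverse inequality, suppose $-I \le \varphi \le I$ in the order sense, and let $x \in {\rm Ball}(M_m(F))_{\rm sa}$. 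Write $x = c_1x_1c_1 - c_2x_2c_2$ as in 3). Then $\varphi^{(m)}(x) = c_1 a c_1 - c_2 b c_2$ with $0 \le a,b \le I$ (after shifting $\varphi$ by $e\otimes I_n$). This is a compression $[c_1\ c_2](a\oplus -b)[c_1\ c_2]^\tran$, so its norm is at most $\|c_1^2+c_2^2\| \le t$. Let $t\to 1$ and use Eq.\ (\ref{gen}) to pass to non-selfadjoint matrices. This also requires showing the order unit norms on the off-diagonal elements $\tilde x$ match the operator space norms, which is the usual $2\times 2$ trick.
\item \emph{Conclusion.} By Choi--Effros (or \cite{BR} in the real case), $F^*$ is a dual operator system with unit $f_1$, and the dual matrix norms equal the matrix order unit norms.
\end{enumerate}

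\emph{($\Leftarrow$)} Let $F^* = V$ be a dual operator system, and set $K_n = {\rm UCP}^\sigma(V,M_n)$, viewed inside $M_n(F)$. Put $f_1 = $ evaluation at $1$, which is weak* continuous on $F^*$ and so lies in $F$. We verify the conditions of Definition \ref{ncbns}.
\begin{enumerate}
\item \emph{Conditions 1), 2), 4).} Ucp maps are contractive, and normal ucp maps are positive, so $K_n \subset {\rm Ball}(M_n(F))_+$. The equality $K_n = M_n(F)_+ \cap H_n$ is the definition of unital. Closedness follows from closedness of the cones.
\item \emph{Condition 3).} This is the main obstacle: a noncommutative Jordan decomposition for selfadjoint normal completely bounded maps $\psi : V \to M_n$ with $\|\psi\|_{\rm cb} < t$. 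The first approach is to represent $V \subset M$ weak* homeomorphically as in \cite[Corollary 2.2]{BMag}. Then extend $\psi$ to a selfadjoint normal cb map on $B(H)$, or on an injective von Neumann algebra, with cb norm $< t$; normality is kept by working through the quotient $M_*/V_\perp$ as in Lemma \ref{banbn} and taking the selfadjoint part $\frac12(\rho+\rho^*)$. Next apply the Wittstock/Haagerup decomposition: $\psi = \psi_1 - \psi_2$ with $\psi_i$ completely positive and $\|\psi_1(1)+\psi_2(1)\| \le t$. Since $F$ is the predual, Lemma \ref{ts}(1) lets us write $\psi_i = c_i k_i c_i$ with $c_i = \psi_i(1)^{1/2}$ and $k_i \in K_n$; the support-projection construction there keeps $k_i$ normal.

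If the normal extension step proves delicate, the fallback is to avoid extension altogether. Namely, prove the infimum formula of Lemma \ref{bn} via a separation argument. Define the seminorm $\|\cdot\|'$ on $M_n(F)_{\rm sa}$ by that infimum; it dominates the operator space norm. If the two norms differed, Effros--Winkler separation applied to the matrix convex set $\{c_1x_1c_1 - c_2x_2c_2\}$ would produce an element of $M_n(V)$ whose order unit norm is at most $1$ but whose dual norm exceeds $1$. This contradicts the operator system norm being the order unit norm.
\end{enumerate}

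Finally, the equivalence with ``matrix order unit space with unit from the hyperplane'' follows from the above, since $e = f_1$ defines the nc hyperplane. The real case runs verbatim, using the real Choi--Effros theorem and the real separation theorem from \cite{BMcI}.
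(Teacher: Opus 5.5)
Your direction from a nc base norm space $F$ to the dual operator system $F^*$ is essentially the paper's argument. There is one slip: from $-e_n\le\varphi\le e_n$ you get $-I\le\varphi^{(m)}(x_i)\le I$, not $0\le a,b\le I$. No shift is needed, since $\|[c_1\ c_2](a\oplus -b)[c_1\ c_2]^\tran\|\le\|c_1^2+c_2^2\|$ only uses $\|a\|,\|b\|\le 1$.

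The genuine gap is in condition 3) for the converse. After extending $\psi$ to a normal selfadjoint cb map on $M$, you invoke the Wittstock decomposition $\psi=\psi_1-\psi_2$. That theorem says nothing about normality of the summands, and they need not be normal. For example, if $\omega$ is a normal state and $\sigma$ a singular state on $M=B(H)$, then $\omega\oplus 0=(\omega\oplus\frac12\sigma)-(0\oplus\frac12\sigma)$ is a legitimate Wittstock decomposition of a normal cp map $M\to M_2$, with $\psi_1(1)+\psi_2(1)=I_2$. A non-normal $\psi_i|_V$ is not an element of $M_n(F)$. So it cannot be written as $c_ik_ic_i$ with $k_i\in K_n={\rm UCP}^\sigma(V,M_n)$. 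Your remark that the support-projection construction ``keeps $k_i$ normal'' presupposes exactly the missing point. Separately, Lemma \ref{ts}(1) needs $\psi_i\le s\,k$ for some $k\in K_n$ as a hypothesis. You should instead run its support-projection construction directly on a normal cp map, which is \cite[Lemma 2.16]{BP}.

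What is missing is a \emph{normal} Jordan decomposition. The paper gets it by writing $\psi=S^*\pi(\cdot)T$ with $\pi$ a normal $*$-representation \cite[Theorem 2.7.10]{BLM}. It then reruns Paulsen's proof of \cite[Theorem 8.5]{Pnbook}, whose summands $\frac14(S\pm T)^*\pi(\cdot)(S\pm T)$ are then normal. Alternatively, you could pass to normal parts: since $\psi$ is normal, the singular parts of $\psi_1,\psi_2$ cancel, and the normal parts are still cp with smaller value at $1$.

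Your separation fallback is a genuinely different route that avoids both the normal extension and the normal Wittstock theorem. As sketched, though, it only yields ${\rm Ball}(M_n(F))_{\rm sa}\subseteq\overline{B_n}$, where $B_n=\{c_1x_1c_1-c_2x_2c_2: c_1^2+c_2^2\le I,\ x_i\in K_n\}$ (matrix convex by a polar decomposition argument). This is because Effros--Winkler \cite{EW} separates only from closed matrix convex sets. To reach the form demanded in 3) you need a further successive-approximation step:
\begin{enumerate}
\item write $x=\sum_k\delta^{k-1}y_k$ with $y_k\in B_n$;
\item sum the positive and negative parts separately, using completeness of $F$ and closedness of the cones;
\item rewrite each sum as $a^{1/2}ka^{1/2}$ with $k\in K_n$ via \cite[Lemma 2.16]{BP}, which gives $c_1^2+c_2^2\le(1-\delta)^{-1}I$.
\end{enumerate}
The same lemma is also what converts ``$-I\le\varphi^{(m)}\le I$ on $K_m$'' into $-e_n\le\varphi\le e_n$ in $M_n(V)$, which your contradiction needs. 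With these additions the fallback does prove 3). It trades the paper's normal representation theory for the nc Hahn--Banach theorem plus an approximation argument, while the paper's route is direct and constructive.
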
 

\begin{proof}  First assume we are in the complex case.
If $V$ is an operator system which has predual $F$ then $F$ is
a matrix ordered matrix normed  operator space with closed matrix cones.
The noncommutative normal state space $K$ is a nc  base
for $F$ as we now check. Fix $n \in \bN$. 
Clearly $K_n = {\rm UCP}^\sigma(V,M_n) = M_n(F)_+ \cap H_n$ where $H$ is the obvious weak* closed hyperplane defined by $1 \in V$, and UCP$^\sigma(V,M_n)$ is the normal matrix  state space of $V$.  
By \cite[Lemma 2.16]{BP} every $u \in M_n(V^*)_+$ is of the form $\alpha^* k \alpha$ for $k \in K_n, \alpha \in M_n$, indeed 
with $\alpha$ positive.  Combining this with the normal version of the Wittstock decomposition, Ball$(M_n(F))_{\rm sa}$ consists of the expressions $x = c_1 x_1 c_1 - c_2 x_2 c_2$ with $c_i$ positive contractive matrices with $\| c_1^2 + c_2^2 \| \leq 1$ and $x_i \in K_m$. We sketch a proof of this normal version of the Wittstock decomposition: Indeed if we suppose that $V$ is a weak* closed subsystem of $M = B(H)$, and $t > 1$, then every selfadjoint weak* continuous completely contractive
$u : V \to M_n$ has a weak* continuous completely bounded extension 
$\tilde{u} : M \to M_n$ of norm $< t$.  This is simply the fact that 
$V_*$ being a complete quotient of $M_*$ means that $M_n(V_*)$ is a quotient of $M_n(M_*)$.  We are assuming $n < \infty$ here
(the assertion is false if $n$ is infinite). 
By averaging with $\tilde{u}^*$ it is selfadjoint.
Any selfadjoint weak* continuous completely bounded map $v$ from $M$ into $M_n$ is a difference $u_1 - u_2$
of two weak* continuous completely positive maps with $\| u_1 + u_2 \| = \| v \|$.  This follows by a modification of the proof of the Wittstock Jordan theorem  in \cite[Theorem 8.5]{Pnbook}, but using the dual version of the Wittstock-Stinespring representation.  That is, first write $v = V^* \pi(\cdot) W$ for Hilbert space contractions $V, W$ and a normal $*$-representation $\pi$ of $M$ (see e.g.\ \cite[Theorem 2.7.10]{BLM}).
Then follow the proof in \cite[Theorem 8.5]{Pnbook}. 
 
 Next suppose that  $X$ is a complex nc base norm space with nc base $K$.  Then $X^*$ is certainly a complex matrix ordered space.  Let $e$ be the positive functional on $X$ corresponding to the nc hyperplane containing $K$.  This is positive  since $e$ is 1 on $K_1$ hence positive on $X_+$.
 Claim: $(X^*,e)$ is an archimedean matrix ordered space.  To see that $e$ is a matrix order unit suppose that  $\psi \in M_n(X^*)_{\rm sa}$ and $\psi \leq t e_n$ for all $t >0$.  Then $\psi \leq t \, I_n$ on $K$, hence $\psi \leq 0$ on $K$ and therefore also on $M_m(X_+)$ for all $m \in \bN$.
 So $\psi \leq 0$.  To see that $e$ is matrix archimedean suppose that  $\psi \in M_n(X^*)_{\rm sa}$.  We know that 
 $\psi$ is bounded uniformly by a constant $c = \| \psi \|_{\rm cb}$
 on all matrix unit balls from $X$.  Hence $\psi$ is bounded uniformly by $c$ on $K$.  This implies that $c e - \psi \geq 0$ on $K$ and hence also on $M_m(X_+)$ for all $m \in \bN$.
 This proves the Claim.

 Thus $(X^*,e)$ is a (dual)  operator system.  To see that the dual operator space matrix norms for $X^*$ agree with the matrix order unit norms we first suppose that $\psi \in M_n(X^*)_{\rm sa}$.  If $\psi$ is completely contractive then the argument in the last paragraph shows that 
 $\pm \psi \leq e$.  Conversely, suppose that $-e_n \leq \psi \leq e_n$.  Then $-I_n \leq \psi \leq I_n$ on $K$. 
 With notation from   3)\ of  Definition \ref{ncbns} suppose that  $x = c_1 x_1 c_1 - c_2 x_2 c_2$ with $c_i$ positive contractive matrices with $\| c_1^2 + c_2^2 \| < 1$ and $x_i \in K_m$.   Then $$\| \psi_m(x) \| = \| c_1 \psi_m(x_1) c_1 - c_2 \psi_m(x_2) c_2 \| = \| c_1^2 - c_2^2 \| \leq \| c_1^2 + c_2^2 \| <  1.$$  We may write a general $x \in {\rm Ball}(M_m(X))$ as a corner of a selfadjoint   $\tilde{x}$  in   ${\rm Ball}(M_{2m}(X))$ (with $2$-$1$ corner $x^*$ and other corners 0). Then  
 $\| \psi_m(x) \| = \| \psi_m(\tilde{x}) \| < 1$, which  shows that $\psi$ is completely contractive.
 
 Finally, for a general linear 
 $\psi : X \to M_n$, we may view $\psi$ as a corner of a selfadjoint  completely bounded 
map $\Psi$ on $E$ (with $2$-$1$ corner $\psi^*$ and other corners 0).  These have the same cb norm, and so by the last paragraph is  equal 
the order unit norm of $\Psi$.
 However this is the matrix order unit norm of $\psi$. 

 The real nc case  works just the same.
 \end{proof} 

  The second and third paragraphs of the last proof  work verbatim to show:

\begin{cor} \label{banbnn} If $F$ is a matrix base ordered space with its canonical matrix base norms, then  $F^*$  is a dual  operator system.  Moreover, this can be done with the dual operator space matrix norms for $F^*$ agreeing with 
the matrix order unit norms.
\end{cor}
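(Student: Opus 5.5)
The plan is to rerun the second and third paragraphs of the proof of Theorem \ref{banbn2} with $X = F$. Here $F$ carries the canonical matrix base norms from Theorem \ref{doesd}, and we use the matrix base ordered space axioms (a)--(c) in place of Definition \ref{ncbns}.

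By Theorem \ref{doesd}, $F$ is an operator space, $f_1$ is completely contractive, and $K$ lies in the positive part of the matrix unit ball. So the operator space dual $F^*$ makes sense, with the dual matrix cones $M_n(F^*)_+$, namely the completely positive maps $F \to M_n$. These are closed, and they are proper: if $\pm\psi \geq 0$ then $\psi$ vanishes on each $M_m(F)_+$, hence on $M_m(F)_{\rm sa} = M_m(F)_+ - M_m(F)_+$ (by (b) and Lemma \ref{ts}(2)), and so $\psi = 0$. Take $e = f_1$, which is completely positive by (a).

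\textbf{Step 1: $e$ is an archimedean matrix order unit.} Let $\psi \in M_n(F^*)_{\rm sa}$.
\begin{enumerate}
\item[(i)] \emph{Order unit.} Put $c = \|\psi\|_{\rm cb}$. Since $K$ lies in the matrix unit ball, $\psi_m(k) \leq c\, I_{nm} = c\,(e_n)_m(k)$ for $k \in K_m$. Because $F$ is based on $K$ (axiom (b) with Lemma \ref{ts}), every positive element has the form $\alpha^* k \alpha$ with $k \in K_m$. Conjugation therefore gives $c\,e_n - \psi \geq 0$ on all of $M_m(F)_+$.
\item[(ii)] \emph{Archimedean.} If $\psi \leq t e_n$ for all $t>0$, then $\psi_m \leq t I$ on $K_m$ for all $t > 0$, so $\psi_m \leq 0$ on $K_m$. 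By the same conjugation argument $\psi \leq 0$ on all positive matrices.
\end{enumerate}
The Choi--Effros characterization then makes $(F^*, e)$ an operator system. It is a dual operator system because $F^*$ is a dual operator space with weak* closed cones.

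\textbf{Step 2: the dual matrix norms agree with the order unit norms.} Start with selfadjoint $\psi$.
\begin{enumerate}
\item[(i)] If $\psi$ is completely contractive, Step 1(i) with $c = 1$ gives $\pm\psi \leq e_n$.
\item[(ii)] Conversely, suppose $-e_n \leq \psi \leq e_n$, so $-I \leq \psi_m \leq I$ on $K_m$. Take selfadjoint $x$ with $\|x\|_m < 1$. By the definition of the canonical norm via (\ref{sa}), we can write $x = \alpha_1^* x_1 \alpha_1 - \alpha_2^* x_2 \alpha_2$ with $x_i \in K_m$ and $\|\alpha_1^*\alpha_1 + \alpha_2^*\alpha_2\| < 1$. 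Then
$$\psi_m(x) = \alpha_1^*\psi_m(x_1)\alpha_1 - \alpha_2^*\psi_m(x_2)\alpha_2 = [\alpha_1^* \; \alpha_2^*]\,(\psi_m(x_1)\oplus(-\psi_m(x_2)))\,[\alpha_1\;\alpha_2]^\tran,$$
which has norm $< 1$.
\item[(iii)] For a general $x$, apply (ii) to $\tilde x$. This uses (\ref{gen}), which defines the norm of $x$ as the norm of $\tilde x$.
\item[(iv)] For a non-selfadjoint $\psi$, embed it as the off-diagonal corner of the selfadjoint map $\tilde\psi$. The cb norms of $\psi$ and $\tilde\psi$ agree, and the matrix order unit norm of $\psi$ is by definition that of $\tilde\psi$.
\end{enumerate}

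\textbf{Main obstacle.} The only delicate point is making sure nothing in the original argument used closedness of the cones in $F$ or completeness of $F$. It does not. Positivity is tested only on elements of the form $\alpha^* k \alpha$, which cover all of $M_m(F)_+$ by (b). The norm computation uses only the infimum formula (\ref{sa}), which is exactly how the canonical norms are defined. Axiom (c) enters only through Theorem \ref{doesd}, to guarantee these are genuine norms, so that $F^*$ is a genuine dual operator space.
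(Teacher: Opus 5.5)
Your proposal is correct and follows the paper's own route. The paper proves this corollary by observing that the part of the proof of Theorem \ref{banbn2} showing that the dual of a nc base norm space is a dual operator system with matching matrix norms goes through verbatim. You rerun exactly that argument, and you correctly note that it needs only the based-on-$K$ property from (b) and Lemma \ref{ts} together with the norm structure from Theorem \ref{doesd}, not completeness or closed cones.

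Your write-up is in fact a little more careful than the source. You check properness of the dual cones. In Step 2(ii) you use the correct estimate $\|\psi_m(x)\| \le \|\alpha_1^*\alpha_1 + \alpha_2^*\alpha_2\| \max_i \|\psi_m(x_i)\|$, whereas the paper's display writes $\|\psi_m(x)\| = \|c_1^2 - c_2^2\|$, which tacitly treats $\psi_m(x_i)$ as the identity.
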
 

As in the classical case in Theorem \ref{xpc}, and by the same proof strategy, we can `complete' any matrix base ordered space to become a nc base norm space. 
If $E$ is a matrix base ordered space  with canonical nc base norms $\| \cdot \|_n$ and base $K$, then the closures $(\overline{K_n})$ are a nc base for $E$ with 
proper cones the closures of the matrix cones of $E$, and induce the same canonical nc base 
matrix norms $\| \cdot \|_n$.   This will be extremely important in 
constructing new examples of  nc base norm spaces in the future.

 \begin{cor}   \label{xpcnc} Let $X$ be a complex  matrix base ordered space  with canonical nc base norms $\| \cdot \|_n$ and base $K$. Then the closures $(\overline{K_n})$ are a nc base for $\bar{X}$, the completion of $X$, with 
proper matrix cones $\cD_n = \overline{(X_n)_+}$, the closures of the matrix cones of $X$, and induces the same canonical nc base 
matrix norms $\| \cdot \|_n$. Moreover $(\bar{X},(\cD_n))$ is a nc base norm space
which has the norm closure $\bar{K}$ of $K$ as a base, and which still has $(\| \cdot \|_n)$ as its 
 matrix base norms.  
\end{cor}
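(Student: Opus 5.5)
We follow the strategy of Theorem \ref{xpc}, now using the noncommutative duality of Corollary \ref{banbnn} and the matrix bipolar result $(P^*)_* = P$ from the Introduction.

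\textbf{Step 1 (the dual).} By Theorem \ref{doesd}, $X$ with its canonical matrix norms is an operator space. By Corollary \ref{banbnn}, $V = X^* = (\bar{X})^*$ is a dual operator system with unit $f_1$, and its dual matrix norms agree with the matrix order unit norms. Write $\mathfrak{c}_n \subset M_n(\bar{X})$ for the predual cone: the $x$ with $\psi^{(n)}(x) \ge 0$ for every $\psi \in M_m(V)_+$ and every $m$. These are the matrix cones induced on $\bar X$ as the predual of $V$. By Theorem \ref{banbn2}, applied in its `if' direction to the operator system $V$ with predual $\bar{X}$ (the first paragraph of that proof), $\bar{X}$ with cones $(\mathfrak{c}_n)$ is a complex nc base norm space. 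Its base is the normal matrix state space $\{x \in \mathfrak{c}_n : f_1^{(n)}(x) = I_n\}$, and its base norms are the given operator space norms of $\bar X$, i.e.\ the completion of $(\|\cdot\|_n)$. Strictly, one needs $\bar X$ to be the operator space predual of $V$ isometrically at each level. This holds since $M_n(\bar X) \subset M_n(V^*)$ completely isometrically.

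\textbf{Step 2 (identify the cone).} We show $\mathfrak{c}_n = \cD_n = \overline{M_n(X)_+}$. The inclusion $\cD_n \subseteq \mathfrak{c}_n$ holds because $M_n(X)_+ \subseteq \mathfrak{c}_n$ by definition of the dual ordering on $V$, and $\mathfrak{c}_n$ is norm closed. For the reverse, take $v_0 \in M_n(\bar X)_{\rm sa} \setminus \cD_n$ and run the proof of the Proposition $(P^*)_* = P$, with $P = (\cD_m)$ closed matrix cones in $\bar X$. The Effros--Winkler separation theorem gives $\psi : \bar X \to M_n$ with ${\rm Re}\, \psi^{(k)} \le I$ on $\cD_k$ for all $k$ but ${\rm Re}\,\psi^{(n)}(v_0) \not\le 0$. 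Symmetrizing gives a completely positive $-\varphi$ relative to $(\cD_k)$, hence $-\varphi \in M_n(V)_+$, and this contradicts $v_0 \in \mathfrak{c}_n$. In particular each $\cD_n$ is proper, being contained in the proper cone $\mathfrak{c}_n$.

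\textbf{Step 3 (the base).} The new nc base is $\mathfrak{c}_n \cap (f_1^{(n)})^{-1}(I_n)$, and we must show it equals $\overline{K_n}$. Clearly $\overline{K_n} \subseteq \cD_n \cap H_n$. For the converse, let $x \in \cD_n$ with $f_1^{(n)}(x) = I_n$, and choose $x_j \in M_n(X)_+$ with $x_j \to x$. Then $a_j = f_1^{(n)}(x_j) \to I_n$, so $a_j$ is invertible for large $j$. Put $k_j = a_j^{-1/2} x_j a_j^{-1/2}$. Then $k_j \in M_n(X)_+ \cap H_n = K_n$, and $k_j \to x$, so $x \in \overline{K_n}$. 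This is the noncommutative replacement for the scaling argument in Theorem \ref{xpc}, and it uses $n$ finite. Since the base norms from Step 1 are the given norms, the norms are unchanged, and $\overline{K_n}$ is a nc base for $(\bar X, (\cD_n))$.

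\textbf{Main obstacle.} I expect Step 1 to be the delicate part. One must check that the predual matrix cones and the normal matrix state space of $V$ really satisfy condition 3) of Definition \ref{ncbns}, via the normal Wittstock decomposition, for an abstract dual operator system $V$ rather than a concrete weak* closed subsystem of $B(H)$. I would first try to realize $V$ as a weak* closed subsystem of some $B(H)$ by a weak* homeomorphic complete order embedding, and then quote the argument in the proof of Theorem \ref{banbn2} verbatim.
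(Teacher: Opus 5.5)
Your proposal is correct and follows essentially the same route as the paper: Corollary~\ref{banbnn} together with Theorem~\ref{banbn2} makes $\bar X$ a nc base norm space with the predual cones $\mathfrak{c}_n$. An Effros--Winkler separation argument with the symmetrized map $-(f+f^*)$ identifies $\mathfrak{c}_n$ with $\overline{M_n(X)_+}$, and the renormalization $a_j^{-1/2}x_j a_j^{-1/2}$ identifies the new base with $\overline{K_n}$. The obstacle you flag in Step 1 does not need separate work here, since the paper likewise simply invokes Theorem~\ref{banbn2} for the dual operator system $X^*$.
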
 

\begin{proof} By Corollary \ref{banbnn} we have that $\cS = X^*$  is a dual  operator system with order unit $f_1$, the positive functional  corresponding to the nc hyperplane containing $K$.  Also, the dual operator space matrix norms for $X^*$ agree with 
the matrix order unit norms. 
Hence  $\bar{X}$ is a nc base norm space with its canonical predual matrix cones ${\mathfrak c} = ({\mathfrak c}_n)$ by Theorem \ref{banbn2}.   
We claim that ${\mathfrak c}_n = \overline{M_n(X)_+}$.  If this were false then by the Effros-Winkler geometric Hahn-Banach theorem \cite{EW}  there exists a continuous linear 
$f : X \to M_n$ with Re $f$ which is $\leq 0$ on all matrix cones
$M_m(X)_+$,  but not on 
${\mathfrak c}_n$.  Note that $f$ is completely bounded since its range is finite dimensional.
Let $g = -(f + f^*)$, then $g$ is completely positive with respect to $(\overline{M_m(X)_+})$, but is not positive 
on ${\mathfrak c}_n$.  
So $g \in M_n(\cS)_+$ so that $g$ is positive on ${\mathfrak c}_n$. 
This is a contradiction. 

Next we show that $\bar{K}$ is a nc base for ${\mathfrak c}$,
indeed that $\bar{K}  = (\{ x \in  {\mathfrak c}_n : (f_1)^{(n)}( x) = I \})$.  Suppose that $x \in {\mathfrak c}_n$ with $(f_1)^{(n)}( x) = I$.  Let  $x_k \in M_n(X)_+$ with
$x_k \to x$.  Then $(f_1)^{(n)}(x_k) \to I_n$, so that we may assume that $d_k = (f_1)^{(n)}(x_k)$ is invertible, and indeed that $d_k \geq \delta I$ for some fixed $\delta > 0$.
Let $y_k = d_k^{-\frac{1}{2}} x_k d_k^{-\frac{1}{2}}$, then 
 $(f_1)^{(n)}(y_k) = I$, so that $y_n \in K$.
 Since $\| d_k^{-\frac{1}{2}} - I \| \to 0$ by functional calculus,
 (or diagonalization of a positive matrix), we have $y_k \to x$.  So $x \in \bar{K}$.  That is, 
$\bar{K} = (\{ x \in  {\mathfrak c}_n : (f_1)^{(n)}( x) = I \})$.  
\end{proof} 

{\bf Remark.}  The real case of Corollary   \ref{xpcnc}  probably follows by the same proof.  We used the (complex) Effros-Winkler geometric Hahn-Banach theorem, but we did not take the time to check if the real version of that result is valid too, probably with the same proof. Or perhaps one could  just use a simple real geometric Hahn-Banach theorem argument instead.

\bigskip

 We obtain a characterization of nc base norm spaces:

 \begin{cor}   \label{ncbasis} A complete matrix ordered matrix normed  complex  (resp.\ real) $*$-vector space $X$ is a 
complex (resp.\ real)  nc base norm space with nc base $K$ if  and only if there is a dual operator system $V$ with operator  space predual $V_*$ and a $*$-linear completely isometric isomorphism $\theta : X \to V_*$ taking $K$ onto the normal matrix state space of $V$.
\end{cor}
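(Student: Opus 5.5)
The plan is to deduce Corollary \ref{ncbasis} almost directly from Theorem \ref{banbn2}, together with the canonical embedding of a Banach space in its bidual.

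\emph{Forward direction.} Suppose $X$ is a complete nc base norm space with nc base $K$ and base function $f_1$.

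1. By Theorem \ref{banbn2}, $V = X^*$ is a dual operator system. Its unit is $e = f_1$, and its dual operator space matrix norms agree with its matrix order unit norms. So $V$ is an operator system with operator space predual $V_* = X$.

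2. Take $\theta$ to be the identity map $X \to V_*$, where $X$ is identified with the weak* continuous functionals on $X^*$. This map is a $*$-linear complete isometry, by the operator space duality $X \subset X^{**}$ with range exactly the normal functionals.

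3. It remains to show that $\theta(K_n) = {\rm UCP}^\sigma(V,M_n)$. An element $x \in M_n(X)$ acts on $V$ as the map $\psi \mapsto \psi^{(n)}(x)$, viewed through the canonical shuffle. This map is weak* continuous, and it is unital exactly when $f_1^{(n)}(x) = I_n$. It is completely positive exactly when $x$ lies in $(P^*)_*$ at level $n$, and this equals $M_n(X)_+$ by the Proposition in the Introduction, since the cones are closed. Condition 4) of Definition \ref{ncbns} says $K_n = M_n(X)_+ \cap H_n$, which gives the equality.

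\emph{Converse.} Suppose $\theta : X \to V_*$ is a $*$-linear complete isometry onto $V_*$ carrying $K$ onto the normal matrix state space.

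1. The first paragraph of the proof of Theorem \ref{banbn2} shows that $V_*$ is a nc base norm space. Its nc base is the normal matrix state space $K' = ({\rm UCP}^\sigma(V,M_n))$, with base function given by evaluation at $1$.

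2. Transport all of this structure back along $\theta$. Because $\theta$ is a $*$-linear complete isometry, conditions 1)--3) of Definition \ref{ncbns} pass to $(X, K)$, with $K = \theta^{-1}(K')$, and $f_1 = \widehat{1}\circ\theta$ defines the nc hyperplane.

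\emph{Main obstacle.} The one point that needs care is the matrix order on $X$: the statement gives $X$ its own matrix cones. To have $K_n = M_n(X)_+ \cap H_n$, I need $\theta$ to be a complete order isomorphism, $\theta^{(n)}(M_n(X)_+) = M_n(V_*)_+$. I would settle this by reading the phrase ``taking $K$ onto the normal matrix state space'' together with $X$ being based on $K$, as in condition 2) of the definitions. In the converse direction, positivity in $X$ is then exactly membership in $\{\alpha^*k\alpha\}$. The analogous set in $V_*$ is all of $M_n(V_*)_+$, by \cite[Lemma 2.16]{BP} as used in Theorem \ref{banbn2}. So $\theta^{(n)}$ matches the two cones. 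If I only wanted to assume that the matrix order on $X$ is the one induced from $V_*$ via $\theta$, the argument is immediate.

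In the real case the argument runs verbatim, using the real version of Theorem \ref{banbn2}.
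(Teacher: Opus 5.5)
Your proposal is correct and follows the paper's argument. The paper likewise takes $V=(X^*,f_1)$ from the proof of Theorem \ref{banbn2}, sets $V_*=X$, and reads off $K_n=\{x\in M_n(X)_+ : f_1^{(n)}(x)=I_n\}={\rm UCP}^\sigma(V,M_n)$; your appeal to $(P^*)_*=P$ just makes explicit what the paper uses silently. The paper dismisses the converse as trivial, and your point that it needs $\theta$ to be a complete order isomorphism (or $X$ to carry the order determined by $K$) correctly identifies a hypothesis the statement leaves implicit.
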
 

\begin{proof} For the nontrivial direction let $V = (X^*,e)$  be the 
dual operator system in the last proof.  Take $V_* = X$.  Moreover 
$$K_n = \{ x \in M_n(X)_+ : e_n(x) = I_n \} = {\rm UCP}^\sigma(V, M_n.)$$  \end{proof} 

Thus the generic nc base norm space ``is" the predual of a dual operator system (or equivalently of a dual $\bA(K)$-space).  For nc base spaces this is the analogue of Kadison's theorem. 
Similarly we shall see presently that the generic nc dual base norm space ``is" the dual of an  operator system.

\begin{prop} \label{rcnb} The (unique Ruan) reasonable complexification of a real nc base norm space $E$ (resp.\ real matrix dual base norm space) with nc base $K$ is a complex nc base norm space $E_c$ (resp.\ matrix dual base norm space) with nc base $K_c$. (See {\rm \cite{BMcI, BMcII}} for definitions of $E_c$, $K_c$, etc.)
\end{prop}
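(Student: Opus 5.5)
The plan is to avoid verifying conditions 1)--4) of Definition \ref{ncbns} directly on $E_c$ and instead pass through duality, using Theorem \ref{banbn2} (and Corollary \ref{ncbasis}) together with the known compatibility of complexification with duality and with operator systems from \cite{BMcI, BMcII}.

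First treat the case where $E$ is complete; the general case then follows by Corollary \ref{xpcnc} together with density. By Theorem \ref{banbn2} in the real case, $V = E^*$ is a real dual operator system with unit $f_1$, and its dual matrix norms agree with the matrix order unit norms. The next step is to invoke the facts from \cite{BMcI, BMcII} that the Ruan reasonable complexification commutes with operator space duality, so $(E_c)^* \cong (E^*)_c = V_c$ completely isometrically and weak* homeomorphically, and that the complexification $V_c$ of a real (dual) operator system is a complex (dual) operator system with unit $f_1 \otimes 1$. In particular $V_c$ has the predual $(V_*)_c = E_c$. We also need that the matrix cones of $E_c$, namely $M_n(E_c)_+$, which are those of the complexification as defined in \cite{BMcI}, coincide with the predual cones induced from $V_c$. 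This follows because positivity of $x + iy$ in $M_n(E_c)$ amounts to positivity of $\begin{bmatrix} x & -y \\ y & x\end{bmatrix}$ in $M_{2n}(E)$, and the analogous description holds for $V_c$. The last thing to check is that the two descriptions correspond under the duality pairing, for which I would use the Proposition in the Introduction that $(P^*)_* = P$. With this in place, Theorem \ref{banbn2} in the complex case gives that $E_c$ is a complex nc base norm space whose base is the normal matrix state space of $V_c$.

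It remains to identify this base with $K_c$. Real normal ucp maps $V \to M_n(\bR)$ complexify to complex normal ucp maps $V_c \to M_n(\bC)$. Conversely, by the description in \cite{BMcI} of ${\rm UCP}(V_c, M_n(\bC))$, a complex ucp map on $V_c$ restricts to a real ucp map $V \to M_n(\bC) \subset M_{2n}(\bR)$. This matches exactly the definition of $K_c$, namely the levels of $K$ at even sizes intersected with the appropriate fixed point condition for the complex structure, or equivalently $K_c = (V_c)$'s normal nc state space. So $K_c$ is the nc base, and the base function of $E_c$ is the complexification of $f_1$.

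For the matrix dual base norm case, the same argument applies with $E = F^*$ and $F$ a real operator system, using Lemma \ref{banaou}'s nc analogue: $E_c = (F_c)^*$ with $F_c$ a complex operator system, whose matrix state space is $K_c$. I expect the main obstacle to be the identification of cones and bases, namely verifying that the cones and $K_c$ defined intrinsically in \cite{BMcI, BMcII} agree with the dual cones and normal state spaces of $V_c$. This would be done by chasing the formula $x + iy \mapsto \begin{bmatrix} x & -y \\ y & x\end{bmatrix}$ through the pairing.
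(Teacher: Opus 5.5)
Your proposal follows essentially the same route as the paper. In both, $(E_c)^*\cong (E^*)_c$ is a complex (dual) operator system with unit $(f_1)_c$, so Theorem \ref{banbn2} (resp.\ Theorem \ref{banaou2}) makes $E_c$ a complex nc base norm space. The paper then identifies the base with $K_c$ more quickly than your route through normal ucp maps, using Lemma \ref{bn}: for $y+iz\in M_n(E_c)_+$ one has $((f_1)_c)^{(n)}(y+iz)=I_n$ iff $f_1^{(2n)}(c(y,z))=I_{2n}$ iff $c(y,z)\in K_{2n}$, i.e.\ iff $y+iz\in (K_c)_n$.

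One caution, on a point the paper itself passes over silently (it applies Theorem \ref{banbn2}, which assumes completeness). Your reduction of incomplete $E$ to the complete case ``by Corollary \ref{xpcnc} and density'' does not work as stated: condition 3) of Definition \ref{ncbns} need not pass from $\overline{E_c}$ to a dense subspace. Condition 3) for $E_c$ can instead be obtained directly. Take a decomposition $c(x,y)=c_1x_1c_1-c_2x_2c_2$ in $M_{2n}(E)$ and compress it by the isometry $V=2^{-1/2}\begin{bmatrix} I \\ -iI\end{bmatrix}$, using the polar decomposition trick. This works because $V^*c(x,y)V=x+iy$ and $K_{2n}\subset (K_c)_{2n}$.
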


 \begin{proof}  Suppose that $E$ is a real nc base norm space, with 
 base $K$ and base function $f_1$.
 Then $E$ is an operator space and matrix ordered $*$-vector space, and $(f_1)_c$ is selfadjoint, where $(f_1)_c$ is the complexification of $f_1$. 
 Thus $E_c$ is an operator space and matrix ordered $*$-vector space, the latter by the Remark after \cite[Proposition 2.6]{BR}.  
 It is also `matrix normed':
 indeed $$\| (x + iy)^* \|_n = 
 \| c(x^*,-y^*) \|_n = \| c(x^*,-y^*)^* \|_n= 
 \| c(x,y) \|_n= \| x +iy \|_n,$$ for $x, y \in M_n(X)$.
 Since the matrix cones for $E$ are closed it is easy to argue that so are the matrix cones for $E_c$ from the above-mentioned Remark. 
 Moreover $(E_c)^* \cong (E^*)_c$ completely isometrically 
 and $*$-linearly.  However $(E^*,f_1)$ is a real operator system, so that  $((E^*)_c,f_1)$ is a complex operator system. 
 Hence $((E_c)^*, (f_1)_c)$ is a complex operator system.
 Thus $E_c$ is a complex base norm space with base $\{ x \in M_n(E_c)_+ : ((f_1)_c)^{(n)}(x) = I \}$ by 
 Lemma \ref{bn}.   If $x = y + iz \in M_n(E_c)_+$  then 
 $((f_1)_c)^{(n)}(x) = I$ iff $(f_1)^{(2n)}(c(y,z)) = I_{2n}$.
 By Lemma \ref{bn} again this is equivalent to $c(y,z) \in K_{2n}$,
 that is, if and only if    $x \in K_c$. 

 The dual case is similar.   For example that $K_c$ is compact follows because it is weak* closed and contained in the matrix ball. 
 \end{proof}

 {\bf Remark.} In general a complex operator system $V$ is not Ruan's `(unique) reasonable complexification' of $V_{\rm sa}$.  See \cite[Section 2.3]{BMcI}.  Similarly 
 a complex  nc base norm space $E$ with nc base $K$ is not Ruan's (unique) reasonable complexification of $E_{\rm sa}$.  Indeed
 the latter is a symmetric operator space (that is, $\| [x_{ij} ] \|_m = \| [x_{ji} ] \|_m$ always) by the discussion above \cite[Lemma 2.7]{BMcI}.

 \medskip

The following shows that the dual of a unital complex $C^*$-algebra is a complex nc dual  base norm space.  Indeed the operator space dual of an operator system is the generic nc dual  base norm space.   Equivalently, the nc dual  base norm spaces are exactly (up to appropriate isomorphism) the $\bA(K)^*$ for a compact nc convex set $K$.  The dual nc base of $\bA(K)^*$ is $\delta(K)$, where $\delta : K \to \bA(K)^*$ is the canonical map \cite{DK}.

\begin{thm} \label{banaou2} A matrix normed matrix ordered $*$-vector space $F$ is a 
complex  (resp.\ real) operator system, or equivalently an archimedean  matrix order unit space (with matrix norms agreeing with 
the matrix order unit norms) if  and only if $F^*$ is a  complex (resp.\ real) nc dual  base norm space,
and if  and only if $F^*$ is a  complex (resp.\ real) matrix dual  base norm space. 
\end{thm}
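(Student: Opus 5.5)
The plan is to prove the cycle: $F$ operator system $\Rightarrow$ $F^*$ nc dual base norm space $\Rightarrow$ $F^*$ matrix dual base norm space $\Rightarrow$ $F$ operator system. I treat the complex case first and then note that the real case runs verbatim, using the real Effros--Winkler / nc separation theorem from \cite{BMcI} and the real Choi--Effros characterization in \cite{BR}. The equivalence of \lq operator system' and \lq archimedean matrix order unit space with matrix norms equal to the order unit norms' is just the Choi--Effros theorem recalled in the Introduction.

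\emph{Forward direction.} Let $F$ be an operator system with unit $e$. I will take $K_n={\rm UCP}(F,M_n)$, allowing also infinite $n$ for the nc version, and let $f_1$ be evaluation at $e$, which is weak* continuous on $F^*$. Then $K_n=M_n(F^*)_+\cap H_n$ by definition, and $K_n$ is weak* compact by Banach--Alaoglu, since ucp maps are completely contractive. This gives conditions 1) and 4). Condition 2) is \cite[Lemma 2.16]{BP}: every completely positive $u:F\to M_n$ has the form $\alpha^*k\alpha$ with $k$ ucp and $\alpha=u(e)^{1/2}$. For infinite $n$ I will use the support projection trick from Lemma \ref{ts}(1), but performed in $B(H)$, where the compression by $(eae)^{-1/2}$ is replaced by the Arveson--Radon--Nikodym / Stinespring factorization $u=V^*\pi V$. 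This lets me avoid inverting $u(e)$.

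The main step is condition 3), the non-normal Wittstock decomposition. Given a selfadjoint complete contraction $\psi:F\to M_n$, I will extend it by Arveson--Wittstock to a completely contractive map on a $C^*$-algebra $C^*(F)\subset B(H)$, average it with its adjoint to make it selfadjoint, and apply Wittstock's Jordan decomposition \cite[Theorem 8.5]{Pnbook}. This yields $\psi=u_1-u_2$ with $u_i$ completely positive and $\|u_1(e)+u_2(e)\|\le 1$. Writing $u_i=c_ix_ic_i$ with $c_i=u_i(e)^{1/2}$ via 2) then gives the required form, with no $t>1$ slack because the extension is exactly norm preserving. Conversely, any element of the form $c_1x_1c_1-c_2x_2c_2$ is a contraction, by the computation in Lemma \ref{bn}. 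Together these show that $F^*$ is an nc dual base norm space.

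\emph{nc $\Rightarrow$ matrix, and back to $F$.} Restricting an nc dual base to the finite levels clearly gives a matrix dual base. For the last implication, suppose $F^*$ is a matrix dual base norm space with weak* continuous base function $f_1$, so $f_1=e\in F$. I will run the second and third paragraphs of the proof of Theorem \ref{banbn2} with the roles of $X^*$ and $F$ interchanged, testing elements of $M_n(F)_{\rm sa}$ against $K$. By the bipolar Proposition in the Introduction, $(P^*)_*=P$, so positivity of $x\in M_n(F)$ is detected on $K$. Hence $x\le te_n$ for all $t>0$ forces $x\le 0$, and $\|x\|\,e_n\pm x\ge 0$ because $K$ lies in the matrix unit ball. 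This makes $e$ an archimedean matrix order unit. For the norms, I will show that $-e_n\le x\le e_n$ holds iff $\|k(x)\|\le 1$ on the matrix ball, using 3) in the form $\|c_1^2-c_2^2\|\le 1$ together with the $\tilde x$ trick. Matrix norms of $F$ are computed from the unit ball of $M_m(F^*)$ at finite levels (the standard operator space duality), so they agree with the order unit norms.

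The hardest step is the non-normal Wittstock decomposition, and specifically its infinite-level version needed for the nc dual base. For this I would use the Stinespring form $\psi=V^*\pi(\cdot)W$ and the construction in \cite[Theorem 8.5]{Pnbook}, which works for arbitrary Hilbert space ranges.
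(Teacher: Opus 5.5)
Your proposal is correct and follows the paper's proof in substance. The forward direction uses the same ingredients: the matrix state space as base, the factorization $u=u(e)^{1/2}\,k\,u(e)^{1/2}$ with $k$ ucp, and the Wittstock decomposition; you are more explicit than the paper about the infinite levels. In the converse, your two checks (positivity tested on $K$ via 2) and the bipolar Proposition, and norms tested on $K$ via 3) and the corner trick) are the paper's, which simply packages them as showing that the restriction map $\theta:F\to\bA(K)$, $\varphi\mapsto\varphi|_K$, is a unital complete order embedding and complete isometry, rather than verifying the Choi--Effros axioms directly.

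One small correction concerns how 3) is used. Pairing $x$ with $c_1k_1c_1-c_2k_2c_2$ gives $c_1A_1c_1-c_2A_2c_2$ (after the canonical shuffle), where $A_i$ is the value of $x$ at $k_i$. The needed estimate is
$$\|c_1A_1c_1-c_2A_2c_2\|\le\|c_1^2+c_2^2\|\max_i\|A_i\|,$$
obtained by writing the left side as $[c_1\;c_2]\,(A_1\oplus(-A_2))\,[c_1\;c_2]^{\tran}$. A bound through $\|c_1^2-c_2^2\|$ is not the right one here, since that is the relevant quantity only when $A_1=A_2=I$.
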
 

\begin{proof} 
If $V$ is an operator system then $V^*$ is canonically 
a 
matrix ordered matrix normed  operator space with its canonical (closed) matrix predual cones.
The noncommutative state space $K$ is a nc dual base 
(hence a matrix dual base) for $V^*$.
Indeed by \cite[Lemma 2.2]{CE} every $u \in M_n(V^*)_+$ is of form $\alpha^* k \alpha$ for $k \in K_n, \alpha \in M_n$, indeed 
with $\alpha$ positive.  Combining this with the Wittstock decomposition, Ball$(M_n(E))_{\rm sa}$ consists of the expressions $x = c_1 x_1 c_1 - c_2 x_2 c_2$ with $c_i$ positive contractive matrices with $\| c_1^2 + c_2^2 \| \leq 1$ and $x_i \in K_n$.  Clearly $K_n = {\rm UCP}(V,M_n) = M_n(V^*)_+ \cap H_n$ where $H$ is the obvious hyperplane defined by $1 \in V$.  

Conversely, let $K$ be a matrix dual base (resp.\ nc dual base) for $E$.  By Definition \ref{dncbns}, $K$ is a compact matrix convex set (resp.\ nc convex set) in 
the selfadjoint part of the matrix unit balls of the dual $E = F^*$ of an operator space and $*$-vector space $F$.  
By 4) in that definition we may view $f_1 \in F_+$.
The map $\theta : F \to A(K)$ (resp.\ $\theta : F \to \bA(K)$), defined by $\varphi \mapsto \varphi|_K$ for $\varphi \in F$, is a completely contractive complete order embedding since if $\psi \in M_n(F)_{\rm sa}$ 
then $\psi \geq 0$ if  and only if $\psi \geq 0$ on $K$.  This is (by definition of nc base) because a selfadjoint $x \in M_m(E)$  is positive 
if and only if $x = c k c$ with $c$ a positive matrix and $k \in K_m$.   
Moreover $\theta(f_1) = 1$, and $\theta$ is completely isometric  since it is completely isometric on $F_{\rm sa}$.  
That it is completely  isometric on $F_{\rm sa}$ follows from definition of dual base,  that Ball$(M_n(E))_{\rm sa}$ consists of the expressions $x = c_1 x_1 c_1 - c_2 x_2 c_2$ with $c_i$ positive contractive matrices with $\| c_1^2 + c_2^2 \| \leq 1$  and $x_i \in K_n$.   For given $\psi \in$ Ball$(M_n(F))_{\rm sa}$ and given $\epsilon > 0$ we may choose such $c_i, x_i$ with 
$$\| \psi  \| - \epsilon <  \| \psi_n(c_1 x_1 c_1 - c_2 x_2 c_2) \| = 
\| c_1 \psi_n(x_1) c_1 - c_2 \psi_n(x_2)  c_2 \| \leq \| \theta_n(\psi) \|.$$  
For general $\psi \in {\rm Ball}(M_n(F))$ we may view $\psi$ as a corner of a selfadjoint  completely bounded 
map $\Psi$ on $E$ (with $2$-$1$ corner $\psi^*$ and other corners 0).
We have  $$\| \theta_{n}(\psi) \| =  \| \theta_{2n}(\Psi) \| = \| \Psi \| = \| \psi \|.$$  
  So $\theta$ is completely isometric. 
Thus because of the above selfadjoint completely  isometric  isomorphism into $\bA(K)$, $F$ is an operator system, indeed an archimedean matrix order unit $*$-vector space, with order unit a functional $f_1$ that is $I_n$ on $K_n$. 
\end{proof}

\begin{cor} If $K$ is a  nc dual base for 
a nc dual  base norm space $E$ then $K$ is nc regular in $E$ in the sense of {\rm \cite{Breg}}. 
\end{cor}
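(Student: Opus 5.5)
The plan is to reduce nc regularity to what was established in the proof of Theorem \ref{banaou2}. I will take the notion of nc regularity from \cite{Breg} to mean the following. $K$ is a compact nc convex set sitting in the selfadjoint part of the matrix unit ball of the dual operator space $E = F^*$. $K$ is nc regular in $E$ if the matrix norms of $E$ (equivalently, dually, those of the predual $F$) are exactly the ones determined by $K$. In the dual formulation, this says the restriction map
$$\theta : F \to \bA(K), \qquad \varphi \mapsto \varphi|_K,$$
is a complete isometry. In the primal formulation, it says that for each $n$ the selfadjoint matrix unit ball ${\rm Ball}(M_n(E))_{\rm sa}$ is the nc `absolute convex hull' of $K$ at level $n$. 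This hull consists of the elements $c_1 x_1 c_1 - c_2 x_2 c_2$ with $c_i \geq 0$, $c_1^2 + c_2^2 \leq I$ and $x_i \in K_n$, with general (non-selfadjoint) elements handled through the $2\times 2$ trick $\tilde{x}$. If \cite{Breg} phrases regularity via one of these formulations, I will first check that the other is equivalent to it, using the nc bipolar/separation theorem (\cite[Theorem 2.4.1]{DK}, or \cite[Theorem 3.6]{BMcI} in the real case) as in the last assertion of the Introduction.

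First, I note that the hypothesis that $K$ is a nc dual base gives exactly condition 3) of Definition \ref{dncbns}. That condition says ${\rm Ball}(M_n(E))_{\rm sa}$ equals the above hull of $K_n$ at every level. Condition 1) of the same definition gives compactness of $K$, and condition 4) places $K$ in a weak* closed nc hyperplane $f_1^{(n)} = I_n$ not passing through $0$.

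Second, I will run the argument from the converse direction of Theorem \ref{banaou2}. The map $\theta$ is a unital complete order embedding, since by 2) an element of $M_m(E)_{\rm sa}$ is positive exactly when it has the form $ckc$ with $k\in K_m$. Evaluating against the decompositions from 3) shows that $\theta$ is isometric on $M_n(F)_{\rm sa}$. The $2\times2$ corner trick then makes $\theta$ completely isometric on all of $M_n(F)$. This is precisely the dual form of regularity.

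Third, if \cite{Breg} requires in addition that $E$ be the nc span of $K$, or that $K$ be nc convex in the Davidson--Kennedy sense at infinite levels, I will invoke Remark 1) after Definition \ref{dncbns} and the fact, noted after Definition \ref{ncbns}, that $E = {\rm ncSpan}(K)$.

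The main obstacle is matching the exact definition of nc regularity in \cite{Breg}, in particular whether it is stated in terms of the norm on $E$ or on its predual, and whether it involves infinite levels. If it is primal (about ${\rm Ball}(M_n(E))$), then the statement is essentially condition 3) plus the $\tilde{x}$ trick. If it is dual, then it is the complete isometry of $\theta$ already proved. For infinite levels, I would first try weak* density of the finite-level compressions together with compactness of $K$ to pass to the limit.
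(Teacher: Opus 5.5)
\textbf{There is a genuine gap: your proposal proves a different property from nc regularity.} In the sense of \cite{Breg}, as the paper uses it, $K$ is nc regular in $E=F^*$ when the restriction map $\theta : F \to \bA(K)$ is a complete order isomorphism \emph{onto} $\bA(K)$. That is, every continuous nc affine function on $K$ is the restriction of an element of the predual.

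Your second step shows that $\theta$ is a unital complete order embedding and a complete isometry. That is already contained in the proof of Theorem \ref{banaou2}. Your ``primal formulation'' is literally condition 3) of Definition \ref{dncbns}. So under your reading the corollary would be a tautology. No bipolar or separation argument converts these norm statements into a statement about the range of $\theta$. The infinite-level question is also not where the difficulty lies. Surjectivity of $\theta$ is the actual content of the corollary, and your proposal never addresses it.

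\textbf{The missing step is density of $\theta(F)$ in $\bA(K)$.} Since $F$ is complete and $\theta$ is isometric, $\theta(F)$ is closed. So density gives surjectivity, and then $\theta$ is a complete order isomorphism.

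The paper obtains density by noting that $K$ is nc preregular in the sense of \cite{Breg}:
\begin{itemize}
\item $E = {\rm ncSpan}(K)$, by 3);
\item $K \subset E_{\rm sa}$, by 4);
\item $K$ lies in the nc hyperplane $f_1^{(n)} = I_n$, which does not pass through $0$, by 4); in particular $\theta(f_1) = 1$.
\end{itemize}
Then \cite[Proposition 3.6]{Breg} gives dense range when $\bF = \bC$. In the real case one complexifies, using Proposition \ref{rcnb} and the Remark below \cite[Theorem 4.3]{Breg}. Your third step mentions the nc span property only as a possible extra requirement. In the paper it is one of the preregularity conditions that feed into the density result.

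In the complex case you could also argue directly. Since $\theta(F)$ is $*$-invariant, it suffices to show that a selfadjoint functional on $\bA(K)$ annihilating $\theta(F)$ is zero. By Jordan decomposition and the level-one part of nc Kadison duality, such a functional has the form $c_1 \delta_{k_1} - c_2 \delta_{k_2}$ with $k_i \in K_1$ and $c_i \ge 0$. Annihilating $\theta(f_1) = 1$ forces $c_1 = c_2$. Annihilating $\theta(F)$ then forces $k_1 = k_2$, because $F$ separates the points of $E$. In the real case, skew functionals are invisible to states, which is why the complexification step is needed there.
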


\begin{proof}   By the expression for Ball$(M_n(E))_{\rm sa}$ in 3) of Definition \ref{dncbns}, $E$ is the nc span of $K$, as we said  in the Remark above Lemma \ref{bn}. 
By 4) in Definition \ref{dncbns},   $K$ is contained in a hyperplane not passing through 0, and also $K \subset E_{\rm sa}$. So  $K$ is nc preregular in the sense of {\rm \cite{Breg}}.  
If $\bF = \bC$ then  $\theta : F \to \bA(K)$ in the last proof 
has dense range by   
\cite[Proposition 3.6]{Breg}, since $K$ is nc preregular.
In the real case one may use the Remark below
\cite[Theorem 4.3]{Breg} (using also Proposition \ref{rcnb}). 
So $\theta : F \to \bA(K)$ is a complete order isomorphism.  This means that 
$K$ is nc regular in $E$  \cite{Breg}. 
\end{proof} 

Conversely, for any nc regular embedding of $K$ in $E$ in the sense of \cite{Breg}, by the main results in that paper $E$ may be made into a dual nc base norm space, and its predual $F$ into an operator system. 

\begin{cor} \label{kd} If $K$ is a  matrix dual base for 
a matrix dual  base norm space $E$ then $K$ can be canonically augmented (by adding the strictly infinite levels to $K$) to be a nc dual base for  $E$, so that $E$ is a nc dual  base norm space. 
\end{cor}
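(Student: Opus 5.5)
The plan is to pass through the predual operator system. Let $K$ be a matrix dual base for $E = F^*$. By Theorem \ref{banaou2} (the matrix dual base case), $F$ is an operator system with unit $f_1$. The map $\theta : F \to A(K)$, $\varphi \mapsto \varphi|_K$, is a unital complete order embedding and a complete isometry. Moreover $E = F^*$ completely isometrically, with the dual matrix cones.

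I would then define the augmented set $\hat K$ to agree with $K$ at the finite levels. At an infinite level $n$ (a cardinal, with $M_n(E)$ as in the Introduction) I would set
\[ \hat K_n = {\rm UCP}(F, M_n) = \{ x \in M_n(E)_+ : (f_1)^{(n)}(x) = I_n \}. \]
At finite levels this agrees with $K_n$ by condition 4) of Definition \ref{dncbns}. This $\hat K$ is the nc state space of the operator system $F$, so by \cite{DK} (or \cite{BMcI} in the real case) it is a compact nc convex set. Its finite levels are $K$, because $K_n = M_n(E)_+ \cap H_n = {\rm UCP}(F,M_n)$. So $\hat K$ is the canonical augmentation of $K$ by its strictly infinite levels.

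Next I would check conditions 1)--4) of Definition \ref{dncbns} for $\hat K$.
\begin{itemize}
\item Conditions 1) and 4) are immediate. Each $\hat K_n$ is weak* closed in the matrix ball (point-weak* topology on ${\rm UCP}(F,M_n)$), it lies in the positive selfadjoint part, and it is cut out by the weak* closed hyperplane defined by $f_1$.
\item Condition 3) only concerns finite $n$, where $\hat K_n = K_n$, so it is inherited.
\item Condition 2) at finite levels is likewise inherited. At infinite levels it is automatic for nc convex sets, as remarked after Definition \ref{dncbns}. Concretely, a ucp-type factorization $u = \alpha^* k \alpha$ for $u \in {\rm CP}(F, M_n)$ can be obtained as in \cite[Lemma 2.16]{BP}, using the support projection of $u(1)$ and compressing.
\end{itemize}
Alternatively, apply the converse (nc) half of Theorem \ref{banaou2} directly to the operator system $F$. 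Its first paragraph shows that the nc state space of $F$ is a nc dual base for $F^* = E$.

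The main obstacle I expect is in condition 2) at infinite levels. The issue is that $u(1)$ need not be invertible and $\alpha$ may be unbounded below, so the $(eae)^{-1/2}$ trick of Lemma \ref{ts} fails, as the Remark after that lemma warns. I would handle this by quoting \cite[Lemma 2.16]{BP} (valid for arbitrary $n$ via Stinespring), or by simply invoking Theorem \ref{banaou2} as above. The canonicity of the augmentation comes from the identification $\hat K = $ nc state space of $F$, and $F$ is determined by $E$ and $K$.
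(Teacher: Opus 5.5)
Your proposal is correct and is essentially the paper's argument. Theorem \ref{banaou2} makes the predual $F$ an operator system with unit $f_1$, and the strictly infinite levels are defined by the same formula $\{x \in M_n(E)_+ : (f_1)^{(n)}(x) = I_n\}$, i.e.\ the nc state space of $F$, which the first part of that theorem's proof shows is a nc dual base for $E$; the paper additionally identifies this augmentation with the closed nc convex hull of $K$ via \cite[Proposition 2.2.10]{DK}, a second way to see that it is canonical.
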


\begin{proof}   By the proof of Theorem \ref{banaou2} the predual $F$ of $E$ is an operator system with matrix order unit $f_1$, and $K_n = \{ x \in M_n(E)_+ = CB^\sigma(F,M_n): (f_1)^{(n)}(x) = I_n \}$, for $n < \infty$.  This formula
also works to define  infinite levels of $K$, indeed this is just the normal nc state space.  
Alternatively, it is  the closed nc convex  hull of $K$ in $E$.  Indeed any nc compact convex set in a dual operator space is the closed nc hull of its finite levels.
This may  be seen for example from the following modification of an argument communicated to us by Matt Kennedy: Let 
$L$ be the closed nc hull of its finite levels.  Clearly $L \subset K$, and so $K_n = L_n$ for finite $n$. So $K = L$ by \cite[Proposition 2.2.10]{DK}.
\end{proof}

 \section{Some examples} \label{ex}
 
 \subsection{The Paulsen system $\cS_V$} \label{ex1}
 If $V$ is a (concrete) operator space in $B(H)$, the bounded operators on a Hilbert space $H$, then the {\em Paulsen system} of $V$ is the operator system given by
 $$\cS_V = \left\{ \left[\begin{array}{cc}\lambda I & x \\ y^* & \mu I \end{array} \right] : x,y \in V; \lambda, \mu \in \mathbb{C} \right\},$$
 where this is viewed as sitting inside $M_2(B(H)) \cong B(H \oplus H)$.  It is well-known that $\left[\begin{array}{cc} I & x \\ y^* & I \end{array} \right] \ge 0$ if and only if $x = y \in {\rm Ball}(M_n(X)$.   See \cite{Pnbook}.
 This formula also defines the {\em matrix order unit matrix norms}, namely $\| x \|_n \leq 1$ if and only if the last matrix is positive. 
Similarly, it is easy to see that  
\begin{equation} \label{plemma}\left[\begin{array}{cc}\lambda  & x \\ x^* & \mu \end{array} \right] \ge 0 \iff |\langle x \zeta, \eta \rangle| \le \sqrt{\langle \lambda \zeta, \zeta\rangle \, \langle \mu \eta ,\eta\rangle} \le \frac{1}{2}(\langle \lambda \zeta, \zeta \rangle + \langle  \mu \eta ,\eta \rangle) 
\end{equation}
for $\eta, \zeta \in H, \lambda, \mu \in M_n$ (see Eq.\ (1.24) in \cite{P}).
 Thus if $x \in M_n(X)$ and $\lambda, \mu \in M_n$ such that $\left[\begin{array}{cc}\lambda I & x \\ y^* & \mu I \end{array} \right] \ge 0$, then $\|x\| \le \sqrt{\|\lambda\|\|\mu\|}$.

 In \cite{Ng} W.H. Ng shows that the matrix ordered dual $(\cS_V)^d$ of an arbitrary Paulsen system $\cS_V$ is again an operator system. Indeed his calculations show that the normalized trace $\tau : \cS_V \rightarrow \bC$, given by $\tau\left(\left[\begin{array}{cc}\lambda I & x \\ y^* & \mu I \end{array} \right]\right) = \frac{1}{2}(\lambda+\mu)$, is an archimedean matrix order unit for $(\cS_V)^*$.
Let us write $(\cS_V)^d$ for Ng's dual system, which is an operator space with its matrix order unit norm.  In the light of Theorem \ref{banbn2} one might thus wonder if $\cS_V$ is an nc base norm space.  The nc base here is the collection of sets $K_n = (\tau^{(n)})^{-1}(I_n) \cap M_n(\cS_V)_+$.  Indeed Theorem \ref{banaou2} shows immediately that $\cS_V$, with the operator space dual structure coming from $((\cS_V)^d)^*$ is a nc base norm space when $V$ is finite dimensional.  For then
$\cS_V$ is the operator space (and matrix ordered) predual of
$(\cS_V)^d$ (c.f.\ Proposition \ref{ncbnsf}). 

The following is no doubt well known but we cannot locate a proof in the literature.

\begin{lem} \label{ps} For a general operator space $V \subset B(K,H)$ any positive matrix  in $M_n(\cS_V)$ may be written
as 
$$\left[ \begin{array}{cc} \lambda \otimes I_H & x \\ x^* & \mu \otimes I_K \end{array} \right] 
= (\lambda^{\frac{1}{2}} \oplus \mu^{\frac{1}{2}}) \, \left[ \begin{array}{cc} I & z \\ z^* & I \end{array} \right]
\, (\lambda^{\frac{1}{2}} \oplus \mu^{\frac{1}{2}}),$$
with $z \in {\rm Ball}(M_n(V))$, and  $\lambda, \mu \in M_n^+$.
Indeed we can choose \newline $z = (e \lambda e)^{-\frac{1}{2}} \, x \, (f \mu f)^{-\frac{1}{2}}$ where $e = s(\lambda), f = s(\mu)$. 
\end{lem}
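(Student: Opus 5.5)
We aim to prove Lemma \ref{ps} by a direct matrix computation, using the positivity criterion (\ref{plemma}) to control supports and then constructing $z$ explicitly. First, a positive element of $M_n(\cS_V)$ is selfadjoint. Its diagonal corners are scalar matrices tensored with identities, and its off-diagonal corners are of the form $x$ and $x^*$ with $x \in M_n(V)$. Compressing to the diagonal corners shows $\lambda \otimes I_H \geq 0$ and $\mu \otimes I_K \geq 0$, so $\lambda, \mu \in M_n^+$. Let $e = s(\lambda)$ and $f = s(\mu)$. We write $\lambda^{-\frac12}$ for $(e\lambda e)^{-\frac12}$ computed in $eM_ne$ and extended by $0$ on $e^\perp$, and similarly for $\mu$. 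These are bounded because $n$ is finite, exactly as in the Remark after Lemma \ref{ts}.

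The first key step is the support claim
$$x = (e\otimes I_H)\, x\, (f \otimes I_K).$$
We apply (\ref{plemma}) with vectors $\zeta \in \bC^n\otimes H$ and $\eta \in \bC^n \otimes K$. If $\zeta$ lies in the range of $e^\perp \otimes I$, then $\langle (\lambda\otimes I)\zeta,\zeta\rangle = 0$, so $\langle x\eta,\zeta\rangle = 0$ for all $\eta$. Hence $(e^\perp\otimes I)x = 0$. Symmetrically, $x(f^\perp \otimes I) = 0$.

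Next set $z = \lambda^{-\frac12} x \mu^{-\frac12}$, interpreted as $(\lambda^{-\frac12}\otimes I)x(\mu^{-\frac12}\otimes I)$, which lies in $M_n(V)$ since $M_n(V)$ is an $M_n$-bimodule. By the support claim,
$$\lambda^{\frac12} z \mu^{\frac12} = (e\otimes I)x(f\otimes I) = x.$$
Then the block product $(\lambda^{\frac12}\oplus\mu^{\frac12})\begin{bmatrix} I & z\\ z^* & I\end{bmatrix}(\lambda^{\frac12}\oplus \mu^{\frac12})$ has diagonal entries $\lambda$ and $\mu$ and off-diagonal entries $x$ and $x^*$, which is the claimed factorization.

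The remaining step, and the main obstacle, is to show $\|z\| \le 1$. We conjugate the original positive matrix by $\lambda^{-\frac12}\oplus\mu^{-\frac12}$, which preserves positivity. This yields
$$\begin{bmatrix} e & z\\ z^* & f\end{bmatrix} \geq 0.$$
Applying (\ref{plemma}) to this matrix gives $|\langle z\eta,\zeta\rangle| \le \|e\zeta\|\,\|f\eta\| \le \|\zeta\|\,\|\eta\|$, so $\|z\|\le 1$. Equivalently, because the block diagonal is dominated by $I \oplus I$, the Paulsen criterion quoted above puts $z$ in ${\rm Ball}(M_n(V))$. The only subtlety is keeping the projections $e$ and $f$ consistent and noting that the matrix norm of $z$ in $M_n(V)$ is its operator norm in $B(\bC^n\otimes K, \bC^n\otimes H)$.
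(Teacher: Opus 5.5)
Your proof is correct and follows essentially the same route as the paper: you use (\ref{plemma}) to force $x = (e\otimes I_H)\,x\,(f\otimes I_K)$, and then reduce to the case where $\lambda$ and $\mu$ are invertible. The only difference is presentational. The paper cuts down to the supports $e\oplus f$ and quotes the invertible case as well known. You instead work directly with the generalized inverses $(e\lambda e)^{-\frac12}$ and $(f\mu f)^{-\frac12}$, and verify $\|z\|\le 1$ explicitly by conjugating with $\lambda^{-\frac12}\oplus\mu^{-\frac12}$ to obtain $\left[\begin{array}{cc} e & z \\ z^* & f \end{array}\right]\ge 0$.
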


\begin{proof}  If $\lambda$ and  $\mu$ are invertible then this is well known, with $z = \lambda^{-\frac{1}{2}} x \mu^{-\frac{1}{2}}$ \cite{Pnbook}.  Indeed this follows easily from (\ref{plemma}) above. Otherwise, if $e = s(\lambda)$ is the support projection
of $\lambda$  then $e \lambda e$ is invertible 
in $B(e \ell^2_n)$;  Similarly for  $\mu$ and $f$.
Identifying $e$ with $e \otimes I_H$ as usual, (\ref{plemma}) 
implies that $x f^\perp = e^\perp x = 0$, so that 
$x = exf$ and $a = (e \oplus f) a (e \oplus f)$.  
Cutting down to these supports, i.e.\ replacing $\ell^2_{n} \oplus \ell_n^2$ by 
 $(e \oplus f) \ell^2_{2n}$, we may assume that $\lambda$ and  $\mu$ are invertible.    The statement is now evident, with $z = ez f = (e \lambda e)^{-\frac{1}{2}} \, x \, (f \mu f)^{-\frac{1}{2}}$. 
\end{proof} 

\begin{thm} \label{ncbnst} For a general operator space $V$ we have 
that $\cS_V$ has an operator space structure with respect to which it is a  nc base norm space with nc base $K$ and $f_1 = \tau$.  Moreover the nc base matrix norms are equivalent to the 
original matrix norms of $\cS_V$. Finally, the operator space dual of this nc base norm space is Ng's operator system $(\cS_V)^d$. \end{thm}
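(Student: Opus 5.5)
The natural route is to show that $\cS_V$, with its matrix cones inherited from $M_2(B(H))$ and base $K_n = (\tau^{(n)})^{-1}(I_n)\cap M_n(\cS_V)_+$, is a matrix base ordered space. Theorem \ref{doesd} then supplies the canonical nc base matrix norms and an operator space structure. Next I would prove these norms are equivalent to the original ones, so the cones and $K_n$ remain closed and Corollary \ref{doesdc} gives a nc base norm space. Finally, Corollary \ref{banbnn} (or Theorem \ref{banbn2}, after noting completeness) identifies the dual as a dual operator system with unit $\tau$; this should match Ng's $(\cS_V)^d$.

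For condition (a), $\tau$ is selfadjoint and completely positive. It is faithful because a positive element with $\lambda = \mu = 0$ has $x=0$ by (\ref{plemma}), and $\tau^{(n)}$ of a positive element is $\frac12(\lambda+\mu)$. For condition (b), a selfadjoint $w = \left[\begin{smallmatrix}\lambda & x\\ x^* & \mu\end{smallmatrix}\right]\in M_n(\cS_V)$ is dominated by $\left[\begin{smallmatrix} r I & 0\\ 0 & rI\end{smallmatrix}\right]$ for large $r$, since $I$ is an archimedean matrix order unit. That element is $r$ times an element of $K_n$, because $\tau^{(n)}(I)=I_n$.

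For condition (c) and the norm equivalence, I would use Lemma \ref{ps}. A positive element $y = (\lambda^{1/2}\oplus\mu^{1/2})\left[\begin{smallmatrix} I & z\\ z^* & I\end{smallmatrix}\right](\lambda^{1/2}\oplus\mu^{1/2})$ has original norm at most $2\max(\|\lambda\|,\|\mu\|)\le 4\|\tau^{(n)}(y)\|$, using that $\|\tau^{(n)}(y)\| = \frac12\|\lambda+\mu\|\ge\frac12\max(\|\lambda\|,\|\mu\|)$. So if $x=y-z$ with both $\tau^{(n)}$-values small, the original norm of $x$ is small, giving (c). The same estimate gives $\|x\|_{\rm orig}\le 4\|x\|_{\rm base}$ on selfadjoints, and via (\ref{gen}) on all of $M_n$.

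For the reverse inequality, I would write a selfadjoint $w$ with original norm at most $1$ as $w = (w+I) - I$ with $w+I \ge 0$ and $\tau^{(n)}(w+I)+\tau^{(n)}(I)\le 3I$. This gives $\|w\|_{\rm base}\le 3$, and the general case follows by the $2\times2$ trick. The constants are unimportant.

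The main obstacle is the final identification with Ng's $(\cS_V)^d$. The dual operator system from Corollary \ref{banbnn} has the same underlying space $(\cS_V)^*$, the dual matrix cones, and unit $\tau$. Its matrix norms are the matrix order unit norms for $\tau$, and these coincide with Ng's system by definition, since Ng's structure is the matrix ordered dual with order unit $\tau$. I expect the care to lie in checking that the cones agree (dual cones of the same cones) and that completeness holds. I would handle completeness by using the norm equivalence: $\cS_V$ is closed in $B(H\oplus H)$ and hence complete, so the cones are closed in both norms.
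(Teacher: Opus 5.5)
Your proposal is correct and follows essentially the same route as the paper. Like the paper, you verify that $\cS_V$ is a matrix base ordered space with base function $\tau$ (faithfulness, domination by the unit, and condition (c) via an original-norm bound on positive elements from (\ref{plemma})/Lemma \ref{ps}), apply Theorem \ref{doesd}, prove two-sided norm equivalence so that the cones and $K_n$ remain closed, and identify the dual with Ng's $(\cS_V)^d$ via the fact (proof of Theorem \ref{banbn2}, Corollary \ref{banbnn}) that the dual operator space norms are the matrix order unit norms for $\tau$. The differences are minor: your decomposition $w=(w+I)-I$ gives the constant $3$ where the paper's diagonal/off-diagonal split gives $2$, and completeness of $\cS_V$ is not needed, since the matrix cones are relatively closed in $M_n(\cS_V)$ whether or not $V$ is closed.
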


\begin{proof} We first claim that $\cS_V$ is a matrix base ordered space in the sense defined above Theorem \ref{doesd}, with nc base $K$ and $f_1 = \tau$.  Note that $f_1$ is 
selfadjoint and strictly positive since if $\lambda, \mu \geq 0$ and $\lambda + \mu = 0$ then $\lambda =  \mu = 0$, so that 
$x = 0$ too.  Moreover if $\tilde{x} \geq 0$ then 
$x = 0$ (a well known matrix fact).   

We  observe that the sets $K_n$ have a sharp upper bound of $4$, with respect to the original operator system norm.  To see that $4$ is a bound, note that any element of $K_n$ can be written in the form $\left[\begin{array}{cc} \lambda & x \\ x^* & \mu \end{array} \right]$, where $x \in M_n(V)$ and $\lambda$ and $\mu$ are positive matrices such that $\lambda + \mu =  2I$.  It follows that both $\lambda$ and $\mu$ have norm less than or equal to $2$. Then $$\left\|\left[\begin{array}{cc} \lambda & x \\ x^* & \mu \end{array} \right]\right\|\le \left\|\left[\begin{array}{cc} \lambda & 0 \\ 0 & \mu \end{array} \right]\right\| + \left\|\left[\begin{array}{cc} 0 & x \\ x^* & 0 \end{array} \right]\right\| \le 2 + \|x\| \le 2 + \sqrt{\|\lambda\| \|\mu\|} \le 4.$$ Finally, it is easy to find elements at all matrix levels that have a norm equal to $4$.

By definition $K_n = \{ x \in M_n(X)_+ : (f_1)^{(n)}(x) = I_n \}$.    That  $E$ is based on $K$  is immediate from Lemma \ref{ps}.
Since $\cS_V$ is an operator system any 
$w \in M_n(\cS_V)_{\rm sa}$  may be written as $(\| w \| I + w)- (\| w \| I + w)$.  Thus 
$w = \alpha_1^*x_1\alpha_1 - \alpha_2^* x_2 \alpha_2$ as desired.  Suppose that $w$ may be written in the latter form but with $\max \{ \| \alpha_1 \| , \| \alpha_2 \| \} < \epsilon$. 
Then by the calculation e.g.\ in the paragraph after Theorem \ref{doesd}, we have that the original norm of $w$ is dominated by  $$\| \alpha_1^* \alpha_1 + \alpha_2^* \alpha_2
\| \, \max \{ \| x_1 \|_{\rm os} , \| x_2 \|_{\rm os} \}
\le 2 \epsilon^2 \cdot 4 .$$  Here $\| x_i \|_{\rm os}$ is
the original operator system norm of $x_k \in K_n$.  So $\| w \|_{\rm os} = 0$ and $w = 0$.  Thus we have verified that 
$\cS_V$ is a matrix base ordered space in the sense defined above Theorem \ref{doesd}, with nc base $K$ and $f_1 = \tau$.
By that theorem, $\cS_V$ is an operator space and matrix ordered matrix normed  $*$-vector space.  Let us write ${\mathfrak S}(V)$ for $\cS_V$ with this new operator space structure, with respect to which it is a matrix base ordered norm space.   Of course as a matrix ordered space ${\mathfrak S}(V) = \cS_V$.  Theorem \ref{doesd} also informs us that $f_1$ is contractive on ${\mathfrak S}(V)$, and 
${\mathfrak S}(V)$ satisfies all of the conditions to be a nc base norm space
except possibly for $M_n(X)_+$ and $K_n$ being closed.  However 
we shall show shortly that the nc base norm is equivalent to the 
original matrix norms of $\cS_V$, so that $M_n(X)_+$ and $K_n$ are closed.    Thus
  ${\mathfrak S}(V)$ is a nc base norm space.

We next show that the nc base norm $\|\cdot\|_{\tau}$ is equivalent to the original operator system  norm at all matrix levels, by showing this is the case first on the positive elements, the selfadjoint elements, and then for a general element.
Suppose $u = \left[\begin{array}{cc} \lambda & x \\ x^* & \mu \end{array} \right]$ such that $\|x\| \le \sqrt{|\lambda  | |\mu|}$, and $\lambda, \mu \ge 0$.  Assume $|\lambda| \geq |\mu|$. Then $$\tau (u) \le |\lambda | \le \|u\| \le \|\lambda I \oplus \mu I \| + \left\| \left[\begin{array}{cc} 0 & x \\ x^* & 0 \end{array} \right] \right\| = |\lambda | + \|x\| \le |\lambda | + \sqrt{|\lambda  | |\mu|} \le   2 |\lambda|,$$
and this is dominated by $4 \, \tau(u).$ 
  A similar calculation, but with $| \cdot |$ replaced by the matrix norm, shows that on the positive cone of $M_n(S_V)$, the new matrix norm is equivalent to the original matrix norm.  

  Next, 
fix selfadjoint nonzero $x \in M_n(S_V)$, and write 
$\| \cdot \|_{\tau,n}$
for 
the base norm at the $n$th matrix level.  So for any $t>1$ there are $x_i \in K_n$ and positive matrices $c_1$ and $c_2$ such that $x
= c_1 x_1 c_1 - c_2 x_2 c_2$ and $\| c_1^2 + c_2^2 \| \leq t \|x\|_{\tau,n}$.  Then
\begin{eqnarray*}\|x\|_n & = &
\|c_1 x_1 c_1 - c_2 x_2 c_2\|_n \\
    & = &
    \| [c_1 \; c_2 ] (x_1 \oplus (-x_2)) [c_1 \; c_2 ]^\tran \|_n \\
    & \le & 
    \| c_1^2 + c_2^2 \| \|x_1 \oplus (-x_2)\|_{2n} \\
    & \le & t \, \|x\|_{\tau,n} \, \max\{\|x_1\|_n,\|x_2\|_n\} \le 4t \, \|x\|_{\tau,n},
\end{eqnarray*}
since $4$ is the upper bound on the original norm of  $K_n$, as noted earlier. Taking the infimum over $t>1$ gives the inequality $\|x\|_n \le 4\|x\|_{\tau,n}.$

For the other direction, we again consider a selfadjoint element $u$ in the unit ball of $M_n(S_V)$ (with respect to the original norm on $S_V$), which we may assume can be written $u = \left[\begin{array}{cc}\lambda & x \\ x^* & \mu \end{array} \right]$ with $x \in M_n(V)$ and $\lambda$ and $\mu$ scalar matrices. To show that the nc base norm of such an element has a bound, it suffices to show that each of $\left[\begin{array}{cc}\lambda & 0 \\ 0 & \mu\end{array} \right]$ and $\left[\begin{array}{cc}0 & x \\ x^* & 0 \end{array} \right]$ have a bound on their nc base norm.  For the first matrix, since it is selfadjoint, there are contractive positive matrices $p_i,q_i$ ($i=1,2$) such that $\lambda \oplus \mu  = (p_1-p_2) \oplus (q_1-q_2)$, and $\| \lambda \| = \| p_1 + p_2\|, \| \mu \| = \| q_1 + q_2\|.$  We have 
$$\lambda \oplus \mu  = (p_1^{1/2} \oplus q_1^{1/2}) (I \oplus I) (p_1^{1/2} \oplus q_1^{1/2}) - (p_2^{1/2} \oplus q_2^{1/2}) (I \oplus I) (p_2^{1/2} \oplus q_2^{1/2}).$$  From this, and the definition of the nc base norm we see that $\|\lambda \oplus \mu\|_{\tau, 2n} \le \|(p_1\oplus p_2) + (q_1\oplus q_2)\| \le 1$.
For the other matrix, we have $$\left[\begin{array}{cc}0 & x \\ x^* & 0 \end{array} \right] = \frac{1}{2}\left[\begin{array}{cc}I & x \\ x^* & I \end{array} \right] - \frac{1}{2}\left[\begin{array}{cc}I & -x \\ -x^* & I \end{array} \right],$$
which has nc base norm less than or equal to $1$. Thus, $\|u\|_{\tau, n}  \le 2 \|u\|_n$.

Finally for an arbitrary matrix, the corner trick used many times in the proof of Theorem  \ref{banaou2} shows that the nc base norms are equivalent to  the original matrix norms.  Indeed, if $u \in M_n(S_V)$, then $\tilde{u} = \left[\begin{array}{cc}0 & u \\ u^* & 0 \end{array} \right]$ is selfadjoint, so that $\frac{1}{2}\|\tilde{u}\|_{\tau,2n} \le \|\tilde{u}\|_{2n} \le 4\|\tilde{u}\|_{\tau,2n}$. Thus $\frac{1}{2}\| u \|_{\tau,n} \le \|u \|_n \le 4\| u \|_{\tau,n}$. Thus the nc base norm is
equivalent to the original matrix norms of $\cS_V$. 

To see that the operator space dual ${\mathfrak S}(V)^*$  is Ng's operator system $(\cS_V)^d$, we simply appeal to the fact in the proof of Theorem \ref{banbn2} that the operator space dual 
matrix norms on $X^*$ agree with the matrix order unit norm induced by the matrix order unit $f_1$. 
  \end{proof}

{\bf Remarks.} 1)\ Explicitly, we have $$K_1 = \left\{ \left[\begin{array}{cc}\lambda I & x \\ x^* & (2-\lambda) I \end{array} \right] : x \in V; 0 \le \lambda \le 2, \|x\| \le \sqrt{\lambda(2-\lambda)} \right\},$$
  where $\| \cdot \|$ denotes the original norm on $S_V$.

\smallskip

  2)\ Note that the nc base norm on ${\mathfrak S}(V)$ is not the original norm. For example, the nc base norm on a diagonal matrix $\lambda I \oplus \mu I$ is $\frac{1}{2}(\lambda + \mu) \neq \max(\lambda,\mu)$, if $\lambda, \mu \geq 0$ are distinct, by Lemma \ref{isa} with $f_1 = \tau$.  If $x \in M_n(V)$ has (original) norm (less than or) equal to $1$, then $\left[\begin{array}{cc} I & x \\ x^* & I \end{array} \right]$ has nc base norm equal to $1$.   This also shows that the nc base norms
  restricted to the `off-diagonal' are dominated by the original matrix norms.

\bigskip 

  One may call ${\mathfrak S}(V)$ constructed here the {\em base Paulsen system} of an operator space $V$.

 \subsection{Other examples}\label{ex2}
 
\begin{prop} \label{ncbnsf} For a general finite dimensional operator system $\cS$ and faithful state $f_1$ on $\cS$ we have 
that $\cS$ has an operator space structure with respect to which it is a  nc base norm space with nc base $K = (K_n) = 
(\{ x \in M_n(X)_+ : f_1^{(n)}(x) = I_n \})$ and base function $f_1$.  Moreover the nc base matrix norms are equivalent to the 
original matrix norms of $\cS$. Finally, the operator space dual of this nc base norm space is the  operator system dual $\cS^d$. \end{prop}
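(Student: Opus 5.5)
The plan is to follow the same strategy as Theorem \ref{ncbnst}: first show that $\cS$, with $K$ and $f_1$ as stated, is a matrix base ordered space in the sense defined above Theorem \ref{doesd}. Then show that the resulting nc base matrix norms are equivalent to the original ones, so that the cones and $K_n$ are closed. Finally, read off the dual from Theorem \ref{banbn2}.

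\textbf{Conditions (a)--(c).} Condition (a) holds because $f_1$ is a faithful state. It is selfadjoint and completely positive, being a positive functional on an operator system, and $K_n$ is defined as $M_n(\cS)_+ \cap H_n$.

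For (b), take $x \in M_n(\cS)_{\rm sa}$. Since $\cS$ is an operator system, $x \le \|x\| I_n = \|x\|(1 \otimes I_n)$, and $1 \otimes I_n \in K_n$ after normalizing $f_1(1)=1$. So (b) holds with $k = 1 \otimes I_n$. By Lemma \ref{ts}, $\cS$ is then based on $K$.

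For (c), the main point is a uniform bound: $K_n$ is bounded in the original operator system norm, uniformly in $n$. Since $\cS$ is finite dimensional and $f_1$ is faithful, $f_1$ is strictly positive on the compact set of positive elements of norm one in $\cS$. Hence there is $\delta>0$ with $f_1(y) \ge \delta \|y\|$ for all $y \in \cS_+$.

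This has to be lifted to matrix levels uniformly, and I expect this to be the main obstacle. I would use the following standard trick. For $y \in M_n(\cS)_+$ and a unit vector $\xi \in \bF^n$, the element $\xi^* y \xi$ lies in $\cS_+$, so $\delta \|\xi^* y \xi\| \le f_1(\xi^* y \xi) = \langle f_1^{(n)}(y)\xi,\xi\rangle$. For positive $y$ one has $\|y\| = \sup\{ \|\eta^* y \eta\|\}$ over unit $\eta$ in $\bF^n \otimes H$, and this is not the same as the supremum over scalar $\xi$. Instead, I would use that the ucp map $\cS \to \bF$ bounds pass through: the map $\Phi : \cS \to \cS$, $\Phi = \delta^{-1} f_1(\cdot)1 - {\rm id}$, is positive. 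Since $\cS$ is finite dimensional, one wants complete positivity of $C f_1(\cdot)1 - {\rm id}$ for some constant $C$. This holds because ${\rm id}$ has finite Choi rank. Concretely, writing $\cS \subset M_N$ (finite dimensional operator systems embed in some $B(H)$; a compression to a finite-dimensional piece suffices for norm estimates), the functional $f_1$ dominates a multiple of every vector state restricted to $\cS$, and ${\rm id}$ is a finite sum of such compressions. Thus $\|y\| \le C\|f_1^{(n)}(y)\|$ for all $y \in M_n(\cS)_+$. In particular $\|k\|_{\rm os} \le C$ on $K_n$, for every $n$.

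Given this, (c) follows exactly as in the proof of Theorem \ref{ncbnst}. If $x = y - z$ with $\|f_1^{(n)}(y)\|, \|f_1^{(n)}(z)\| < \epsilon$, then $\|x\|_{\rm os} \le 2C\epsilon$, so $x = 0$. Theorem \ref{doesd} then makes $\cS$ an operator space satisfying everything except possibly closedness.

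\textbf{Equivalence of norms.} The argument of Theorem \ref{ncbnst} gives $\|x\|_n \le 2C \|x\|_{f_1,n}$ on selfadjoint elements, via $x = c_1x_1c_1 - c_2x_2c_2$.

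Conversely, for selfadjoint $x$ with $\|x\|_n \le 1$, write $x = (I + x)/1 - I$ up to scaling, that is $x = \tfrac12\big((I+x) - (I-x)\big)$, with $0 \le I \pm x \le 2I$. Then $\|x\|_{f_1,n} \le \tfrac12\|f_1^{(n)}(2I)\| = 1$.

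The corner trick (\ref{gen}) then gives equivalence at all levels. Hence the original-norm-closed cones $M_n(\cS)_+$ and the sets $K_n$ are closed in the new norms, and $\cS$ is a nc base norm space.

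\textbf{Duality.} Finally, by the proof of Theorem \ref{banbn2}, the operator space dual carries the matrix order unit norms induced by $f_1$ and the dual matrix cones. These are exactly the structures of the Choi--Effros dual operator system $\cS^d$ with unit $f_1$, which in finite dimensions is an operator system for a faithful state.
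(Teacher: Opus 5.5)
Most of your argument is sound and takes a different route from the paper's. Conditions (a) and (b), the estimate $\|x\|_{f_1,n}\le\|x\|_n$ from $x=\frac12((I+x)-(I-x))$, the corner trick, and reading off the dual via Corollary \ref{banbnn} all work.

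\textbf{The gap.} It is the step you flagged yourself: the bound $\|y\|\le C\|f_1^{(n)}(y)\|$ for $y\in M_n(\cS)_+$, with $C$ independent of $n$. Nothing you wrote proves it.
\begin{itemize}
\item The reduction ``write $\cS\subset M_N$'' is not available in general. An operator space embeds completely isometrically in some $M_N$ only if it is $1$-exact. Hence the Paulsen system of a finite dimensional operator space that is not $1$-exact (e.g.\ ${\rm Max}(\ell^1_m)$ for large $m$) embeds in no $M_N$.
\item Suppose instead you compress to a finite dimensional piece of $H$ chosen according to $y$. Then expressing the compressed identity through vector states on $M_N$ gives a constant depending on $N$, hence on $y$ and $n$. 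Uniformity, which is the entire content of this step, is not obtained.
\end{itemize}
Your finite-rank idea is the right one, but it must be applied to ${\rm id}_{\cS}$ itself, which has rank $d=\dim\cS$.

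\textbf{A direct fix.}
\begin{itemize}
\item Choose a basis $s_1,\dots,s_d$ of $\cS$ consisting of selfadjoint elements. Write ${\rm id}_{\cS}=\sum_{i=1}^d\varphi_i(\cdot)\,s_i$ with the dual basis, whose functionals $\varphi_i$ are then selfadjoint.
\item Jordan decompose $\varphi_i=\varphi_i^+-\varphi_i^-$: extend to $B(H)$ by Hahn--Banach, symmetrize, decompose, and restrict.
\item For $y\in M_n(\cS)_+$ put $p_i=(\varphi_i^+)^{(n)}(y)\ge0$ and $q_i=(\varphi_i^-)^{(n)}(y)\ge0$. Then $y=\sum_i(p_i-q_i)\otimes s_i\le\sum_i\|s_i\|\,(p_i+q_i)\otimes1$.
\item Let $\psi_i=\varphi_i^++\varphi_i^-$. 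Then $\psi_i\le\delta^{-1}\|\psi_i\|f_1$ on $\cS_+$, and positive functionals are completely positive. Hence $y\le C\,f_1^{(n)}(y)\otimes1$ with $C=\delta^{-1}\sum_i\|s_i\|\,\|\psi_i\|$.
\end{itemize}

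\textbf{An alternative fix.} You actually need less than you tried to prove.
\begin{itemize}
\item Condition (c) is levelwise. For positive $y$, $\|y\|\le\sum_j\|y_{jj}\|\le n\delta^{-1}\|f_1^{(n)}(y)\|$ already gives it.
\item Closedness of $M_n(\cS)_+$ and $K_n$ is automatic, since each $M_n(\cS)$ is finite dimensional.
\item The uniform equivalence of matrix norms then follows because linear maps on finite dimensional operator spaces are completely bounded. This is exactly the fact the paper uses.
\end{itemize}

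\textbf{Comparison with the paper.} With either fix your proof is a valid alternative. The paper instead quotes Choi--Effros (that $\cS^d$ is an operator system with unit $f_1$) and applies Theorem \ref{banaou2} to its predual $\cS$. Your route through Theorem \ref{doesd} and Corollary \ref{banbnn} reproves that Choi--Effros fact rather than assuming it. So your final appeal to it is unnecessary.
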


\begin{proof}  Choi and Effros prove in \cite[Corollary 4.5]{CE} that if $f_1$ is a faithful state on a finite 
dimensional operator system $\cS$ then  $\cS^d = \cS^*$  is an operator system with identity/order unit $f_1$.  See \cite[Section 8]{BR} for the real case of this.   Theorem \ref{banaou2} shows immediately that $\cS$, with the operator space dual structure coming from $(\cS^d)^*$, is a  nc base norm space with nc base $K = (K_n) = (CB^\sigma(\cS,M_n))
= 
(\{ x \in M_n(\cS)_+ : f_1^{(n)}(x) = I_n \})$.  For then
$\cS$ is the operator space (and matrix ordered) predual of
$\cS^d$.  The matrix norm equivalence follows since any 
 isomorphism of finite dimensional operator spaces is
a complete isomorphism \cite{ER}. 
\end{proof}

{\bf Remark.}  A similar argument to Theorem \ref{ncbnst},  using Theorem \ref{doesd}, should work to show that several other known (infinite dimensional) operator systems $\cS$ whose dual $\cS^d$ has an archimedean  matrix order unit (so that  $\cS^d$ can be realized as an operator system),
are a nc base norm space which is also an operator space predual of the original operator system.

 \medskip

 The following gives a new approach to complex base norm spaces, using the Min and Max functors in operator space theory
 (see e.g.\ 1.2.21 and 1.2.22 in \cite{BLM}).  The dual Taylor norm condition disappears in this formulation, but it is replaced by matrix norm conditions.    

\begin{cor} A complete ordered complex  normed $*$-vector space $V$ is a complex base norm space if and only if ${\rm Max}(V)$ is a nc base norm space.   And $X^*$ is a complex dual base norm space if and only if ${\rm Max}(V^*)$ is a dual nc base norm space.
\end{cor}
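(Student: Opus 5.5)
We argue by duality, using the characterizations already established: Lemma \ref{banbn} for complex base norm spaces, Theorem \ref{banbn2} for nc base norm spaces, Lemma \ref{banaou} and Theorem \ref{banaou2} for the dual versions, and the standard identities ${\rm Max}(V)^* = {\rm Min}(V^*)$ and ${\rm Min}(V)^* = {\rm Max}(V^*)$ completely isometrically (1.2.21, 1.2.22 of \cite{BLM}). Throughout, ${\rm Max}(V)$ is given the matrix cones generated by $V_+$. Concretely, $M_n({\rm Max}(V))_+$ is taken to be the cone of elements whose pairing with every $\varphi\in M_n(V^*)$, which is positive exactly when it is a completely positive map into $M_n$, is positive. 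These cones are closed, so ${\rm Max}(V)$ is a matrix ordered matrix normed $*$-vector space, and its level-one structure is $V$ with its original norm and cone.

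For the forward direction, suppose $V$ is a complex base norm space. By Lemma \ref{banbn}, $V^*$ is a dual complex aou space, that is, a complex function system $A$ with order unit $f_1$ whose norm is the minimal (Taylor) order unit norm of Lemma \ref{ist}.

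1. A function system carries a unique operator system structure compatible with its state space, namely its minimal structure, given by ${\rm Min}$ of its order unit norm. Indeed the matrix order unit norms of a subsystem of $C(K,\bC)$ coincide with the Min matrix norms. So $V^* = {\rm Min}(V^*) = {\rm Max}(V)^*$ is a dual operator system whose dual operator space matrix norms agree with its matrix order unit norms.

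2. To apply Theorem \ref{banbn2}, I must check that the cones on ${\rm Max}(V)$ predual to the Min matrix order on $V^*$ are the cones fixed above. Positivity of $\varphi: V\to M_n$ in $M_n({\rm Min}(V^*))$ means $\varphi$ is positive, and positive maps into $M_n$ from an ordered space with commutative dual are automatically completely positive. This yields the identification.

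3. Theorem \ref{banbn2} then gives that ${\rm Max}(V)$ is a nc base norm space. Its base is the normal matrix state space, and $K_1$ is the base of $V$.

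For the converse, suppose ${\rm Max}(V)$ is a nc base norm space. By Lemma \ref{isa}, $V$ is a base ordered space with base $K_1$, and the level-one nc base norm on $V_{\rm sa}$ is the base norm. The remaining issue is that the norm on all of $V$ equals the dual Taylor norm. For this, Theorem \ref{banbn2} shows that ${\rm Max}(V)^* = {\rm Min}(V^*)$ is a dual operator system with order unit $f_1$, whose norms are the matrix order unit norms. Level one of ${\rm Min}(V^*)$ is then an aou space $V^*$ whose norm is the restriction of an operator system norm, and by Lemma \ref{ist} this is the Taylor norm over $(V^*)_{\rm sa}$. Dualizing as in Lemma \ref{banaou}/\ref{banbn}, the norm on $V$ is the dual Taylor norm, so $V$ is a complex base norm space.

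The dual statement runs the same way: use Theorem \ref{banaou2} and Lemma \ref{banaou} with ${\rm Max}(V^*) = {\rm Min}(V)^*$, where $V$ is now the predual function system. Here ${\rm Min}(V)$ is the operator system of a function system, and its dual is ${\rm Max}(V^*)$ with the dual matrix cones.

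The main obstacle is step 2 together with its dual counterpart: matching the matrix cones carried by ${\rm Max}(V)$ with the cones predual to the operator system ${\rm Min}(V^*)$. If the paper's intended cones on ${\rm Max}(V)$ are different, for instance the cones generated by $\alpha^*(v_{ij}\text{ positive diagonal})\alpha$, I would first verify via Corollary \ref{xpcnc}-type closure arguments that their closures coincide with the dual cones, using the Effros--Winkler bipolar argument from the Proposition in the Introduction.
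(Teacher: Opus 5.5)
Your proposal is correct and is essentially the paper's proof. It uses the same chain of equivalences through ${\rm Max}(V)^* = {\rm Min}(V^*)$ and ${\rm Min}(V)^* = {\rm Max}(V^*)$, combined with Lemma \ref{banbn} and Theorem \ref{banbn2} for the first assertion and with Lemma \ref{banaou} and Theorem \ref{banaou2} for the dual one.

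Your explicit treatment of the matrix cones on ${\rm Max}(V)$ fills in a point the paper leaves implicit, but two justifications should be sharpened. First, positive maps $V \to M_n$ are completely positive because the closed cones generated by $V_+$ are the smallest possible ones, not because $V^*$ is commutative. Second, Lemma \ref{ist} applies in your converse because the dual cones on ${\rm Min}(V^*)$ are then the cones of positive maps, which unitally completely order embeds ${\rm Min}(V^*)$ in $C(S,\bC)$ for $S$ its state space; a general operator system norm at level one need not be the Taylor norm.
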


 \begin{proof} Note that $V^*$ is a complex dual aou space iff
 Min$(V^*)$ is a dual operator system. So iff Max$(V)$ is a nc base norm space, by Theorem \ref{banbn2} and the duality of Min and Max. But $V^*$ being a complex dual aou space is also equivalent to $V$ being a base norm space by Theorem \ref{banbn}.

 The other is similar.  Indeed  $V$ is a complex normed aou space iff  Min$(V)$ is an operator system.  So iff Max$(V^*)$ is a dual nc base norm space.  But this is also equivalent to $V^*$ being a dual base norm space. 
 \end{proof}

{\bf Remark.}  1)\ The above is related to the perspective of \cite{PTT} for aou spaces (and other works building on that paper), and therefore will have implications for entanglement. 

\medskip 

2)\ The real case of the last result is probably easily checked to be true, by complexification, or by the same argument 
but using \cite[Proposition 2.6]{Sharma} and \cite[Theorem 3.5]{BR} (see also \cite[Section 9]{BR}).

\bigskip

{\bf Acknowledgement}: This project was partially supported by NSF grant DMS-2154903. We thank Fred Schultz and Vern Paulsen for some historical remarks
(on real base norms and Lemma \ref{ps} respectively), and Matt Kennedy for some comments and conversation
(e.g.\ on a fact in the proof of Corollary \ref{kd}).   The second author would also like to thank Ricky Ng for discussions on his work on the dual of a Paulsen system, which inspired the initial inquiry into some of the topics treated here. 
 After we circulated our paper Travis Russell pointed out his paper \cite{Rus}.
The published version of this contains a new Section 7
which contains an interesting   `noncommutative base normed space' variant, and their {\em ordered} duality with
operator systems.   However it is stated there that for an operator system $S$, 
``The matrix norm on $S^d$ induced by its gauge is generally different from the operator space dual'', that is $S^*$.  Indeed if $S = l^\infty_2$ (an example suggested by him) 
then his $S^d$ base space is not $l^1_2$, thus is not a strict generalization of the classical base norm theory.  
Indeed the nc base matrix norms of \cite[Section 7]{Rus} are different to (indeed are equivalent up to a constant, and are  dominated by) ours.   Also his noncommutative base norms on his noncommutative base spaces induce operator space dual norms which do not agree with the matrix order unit norms on the dual.  Nonetheless, studying his definitions helped us to shorten our nc base definition.

Suppose that $E$ is a `noncommutative base normed space' in his sense, and  $x \in M_n(E)_{\rm sa}$.   Suppose that for 
every  $\epsilon > 0$ we can write $x = y-z$ for $y, z \in M_n(E)_+$ with $\| f_1^{(n)}(y + z) \| < \epsilon$.  Then $x \leq y \leq \|  f_1^{(n)}(y) \| \, k < \epsilon \, k$ for some $k \in K_n$
by Lemma  \ref{ts} (1).  So $x \leq 0$.
Similarly,   $-x \leq \epsilon \, k'$, and $x \geq 0$.   Thus $x = 0$.  Thus  $E$ is a matrix base ordered space in our sense. 
That his norms $(\| \cdot \|^\nu_n)$ are dominated by ours follows from Lemma \ref{ts}.  By 3) in Definition \ref{ncbns}, any $\varphi \in M_n(X)_{\rm sa}$ with our norm $< 1$ is dominated by some $y\in M_n(X)_+$ with our norm $< 1$,
hence is dominated by some $k \in K_n$.  Conversely if $0 \leq \varphi \leq k \in K_n$ then
$\| \varphi \| = \| (f_1)^{(n)}(\varphi)
\| \leq 1 .$  Thus the two norms are actually equal on $M_n(X)_+$. That they are equivalent up to a constant follows from this.  Suppose that $x \in M_n(X)_{\rm sa}$ with $t > \| x \|^\nu_n$ and 
$x \leq t k$ and $-x \leq t k$.  Then 
$$\| x \| \leq \| x + t k \| + t \| k \|
= \| x + t k \|^\nu_n + t \leq 3 t.$$

\end{document}